\numberwithin{equation}{section}
\theoremstyle{theorem}
\newtheorem{lemma}{Lemma}
\newtheorem{theorem}{Theorem}
\newtheorem{proposition}{Proposition}
\newtheorem{assumption}{Assumption}
\theoremstyle{remark}
\newtheorem{remark}{Remark}
\theoremstyle{definition}
\newtheorem{definition}{Definition}
\DeclareMathOperator{\dom}{dom}
\DeclareMathOperator*{\argmin}{arg\,min}
\DeclareMathOperator{\Div}{div}
\newcommand{\ZZ}{\mathbf{Z}}
\newcommand{\grad}{\nabla}
\newcommand{\lookUp}[1]{}
\newcommand{\norm}[2][]{\|{#2}\|_{#1}}
\newcommand{\scp}[3][]{\langle{#2},{#3}\rangle_{#1}}
\newcommand{\RR}{\mathbf{R}}
\newcommand{\mF}{\mathcal{F}}
\newcommand{\mG}{\mathcal{G}}
\newcommand{\mI}{\mathcal{I}}
\newcommand{\mP}{\mathcal{P}}
\newcommand{\mV}{\mathcal{V}}
\newcommand{\mN}{\mathcal{N}}
\algnewcommand\INPUT{\item[\textbf{Input:}]}%
\algnewcommand\OUTPUT{\item[\textbf{Output:}]}%
\newcommand{\gap}{\mathfrak{G}}
\DeclareMathAlphabet{\mathbfit}{OML}{cmm}{b}{it}
\date{}
\begin{document}

%\title{Ergodic convergence rates for accelerated Douglas-Rachford iterations}
\title{An Investigation on Semismooth Newton based Augmented Lagrangian Method for Image Restoration}

\author{Hongpeng Sun\thanks{Institute for Mathematical Sciences,
		Renmin University of China, 100872 Beijing, China.
		Email: \href{mailto:hpsun@amss.ac.cn}{hpsun@amss.ac.cn}.} }

\maketitle
\begin{abstract}
Augmented Lagrangian method (also called as method of multipliers) is an important and powerful optimization method for lots of smooth or nonsmooth variational problems in modern signal processing, imaging, optimal control and so on. However, one usually needs to solve the coupled and nonlinear system together and simultaneously, which is very challenging. In this paper, we proposed several semismooth Newton methods to solve the nonlinear subproblems arising in image restoration in finite dimensional spaces, which leads to several highly efficient and competitive algorithms for imaging processing. With the analysis of the metric subregularities of the corresponding functions, we give both the global convergence and local linear convergence rate for the proposed augmented Lagrangian methods with semismooth Newton solvers.  
\end{abstract}

%\paragraph{Key words.}{Augmented Lagrangian method, \and semismooth Newton method, \and local linear convergence rate, \and metric subregularity}

%\keywords{Preconditioned Douglas-Rachford iteration \and Primal-dual algorithms \and Variational image denoising and deblurring \and Total Generalized Variation \and Block preconditioner}
%\paragraph{AMS subject classifications.}
%65K10,
%% 65K Mathematical programming, optimization and variational techniques
%%%  65K10 Optimization and variational techniques
%49K35,
% 49    Calculus of variations and optimal control; optimization
%% 49K          Optimality conditions
%%% 49K35  Minimax problems
%90C25,
% 90 Operations research, mathematical programming
%% 90C Mathematical programming
%%% 90C25       Convex programming
%65F08.
% 65-XX                         Numerical analysis
%       65Fxx           Numerical linear algebra
%               65F08           Preconditioners for iterative methods
\section{Introduction}

The augmented Lagrangian method (shorted as ALM henceforth) was originated in \cite{HE,POW}. The early developments can be found in \cite{BE, FG, Roc2} and the extensive studies in infinite dimensional spaces with various applications can be found in \cite{FG, KK} and so on. The comprehensive studies of ALM for convex, nonsmooth optimization and variational problems can be found in \cite{BE, KK}. In \cite{Roc2}, the celebrated connections between the ALM and proximal point algorithms are established, where ALM is found to be equivalent to the proximal point algorithm applying to the essential dual problem. The convergence can thus be concluded in the general and powerful proximal point algorithm framework for convex optimization \cite{Roc1, Roc2}. 

ALM is very flexible for constrained optimization problems including both equality and inequality 
constraint problems \cite{BE, KK}. However, the challenging problem is solving the nonlinear and coupling systems simultaneously which usually appear while applying ALM. This is different from alternating direction method of multipliers (ADMM) type methods \cite{GL1,FG}, which decouple the unknown variables, deal with each subproblem separately and update them consecutively. 
%ADMM is  very popular in variational image and signal processing \cite{GL1} and it can decompose a complicated system and deal with each subproblems separately. 
However, for ALM, the extra effort is deserved if the nonlinear systems can be solved efficiently. This is due to the appealing asymptotic linear or superlinear convergence of ALM with increasing step sizes \cite{Roc1, Roc2, LU}. It is well known semismooth Newton methods are efficient solvers for nonsmooth and nonlinear systems. Semismooth Newton based augmented Lagrangian methods already have lots of successful applications in semidefinite programming \cite{ZST}, compressed sensing \cite{LST, ZZST}, friction and contact problem \cite{Gor, Gor1} and imaging problems \cite{HK1,KK1}. 
%While for ADMM, the step sizes are usually fixed or with certain relaxations and changing step sizes is a subtle issue \cite{FG}. The asymptotic convergence rate  of ADMM is usually sublinear.
 
In this paper, we proposed several novel semismooth Newton based augmented Lagrangian methods for the total variation regularized image restoration problems with quadratic data terms.  Total variation (TV) regularization is  quite fundamental for imaging problems. Besides, the regularization problem is a typical nonsmooth and convex optimization problem which is very challenging to solve and is standard for testing various algorithms \cite{CP}. To the best of our knowledge, the most related work is \cite{HK1, KK1, HS}. In \cite{HK1}, Moreau-Yosida regularized dual problem of the anisotropic ROF model in Banach space is considered and solved with the semismooth Newton based ALM method. In \cite{KK1}, the primal-dual active set method is employed for nonlinear systems of the ALM applying to the additionally regularized primal problems for solving the nonlinear system.  In \cite{HS}, semismooth Newton methods are directly applied to the strongly convex regularized ROF models with strong convexity on both the primal and dual variables without ALM method.    

 All the proposed algorithms are based on applying ALM to the primal model, whose strong convexity can be employed. Compared to \cite{HS}, we do not need the strong convexity on the dual variables. Our contributions belong to the following parts. First, by introducing  auxiliary variables, we use primal-dual semismooth Newton method \cite{HS} for nonlinear systems of ALM, which is very efficient for ALM. The proposed ALM is very efficient compared to the popular existing algorithms including the primal-dual first-order method \cite{CP}. They are especially very fast for the anisotropic total variation regularized quadratic problems. Second, we proved that the maximal monotone KKT (Kuhn-Tucker) mapping is metric subregular for the anisotropic TV regularized quadratic problem.  We thus get the asymptotic linear or superlinear convergence rate of both the primal and dual sequence for the anisotropic case by the framework in \cite{Roc1, Roc2, LU}. With the calm intersection theorem \cite{KKU}, we also prove the metric subregularity of the maximal monotone operator associated with the dual problem for the isotropic case under mild condition. This leads to the asymptotic linear or superlinear convergence rate of the dual sequence for the isotropic case \cite{Roc1, Roc2, LU}. 
 To the best of our knowledge, these subregularities are novel for the TV regularized quadratic problems. Third, we also give a systematical investigation of another efficient semismooth Newton method for solving the nonlinear systems that appear in ALM, which is involved with the soft thresholding operator. 
 
 The rest of this paper is organized as follows. In section \ref{sec:rofALM}, we give an introduction to the TV regularized quadratic problems and the corresponding ALM including the isotropic and anisotropic cases. In section \ref{sec:alm:pdssn}, we present discussions on the primal-dual semismooth Newton for ALM by introducing auxiliary variables, which turns out very efficient. In section \ref{sec:alm:ssnp:thres}, we give detailed discussions on the semismooth Newton method involving the soft thresholding operators. Although all the semismooth Newton algorithms are aiming at the same nonlinear system for the isotropic or the anisotropic TV regularization. However, different formulations turn out  different algorithms with efficiency. In section \ref{sec:convergece:ssn:alm}, we give the metric subregularity for the maximal monotone KKT mapping for anisotropic TV regularized quadratic problems and for the maixmal monotone operator associated with the dual problem of isotropic TV regularized quadratic problems. Together with the convergence of the semismooth Newton method, we get the corresponding asymptotic linear convergence. In section \ref{sec:numer}, we present the detailed numerical tests for all the algorithms and the comparison with efficient first-order algorithms focused on the ROF model and deblurring model. In section \ref{sec:conclude}, we present some discussions.  
\section{Image restoration and Augmented Lagrangian Methods}\label{sec:rofALM}

The total variation regularized model is as follows.
\begin{equation}\label{eq:ROF}
\min_{u\in X} D(u) + \alpha \| \nabla u\|_{1}, \tag{P}
\end{equation}
where $D(u) = \norm[2]{Ku - z}^2/{2} + \frac{\mu}{2}\|\nabla u\|_{2}^2$ with $\mu\geq 0$ and $z$ being the known data for general image restoration \cite{HS,KK1}. Here $K$ is a linear and bounded operator depending on the problems, e.g., $Ku = \kappa \ast u$ being the convolution with kernel $\kappa$ for image deblurring problems. The strong convexity of $D(u)$ is of critical importance for the proposed algorithms and analysis.  Especially, the ROF model is for the case $K=I$ and $\mu=0$. 
For the deblurring problems, we added the strongly regularization term $\mu \| \nabla u\|_{2}^2/2$ with $\mu >0$ since $K^*K$ is usually not positive definite.
Henceforth, all the variables, operators and spaces are all in finite dimensional space setting. 
 The norm $\|\cdot\|_{1}$ based on the following  isotropic or anisotropic norm
\begin{equation}\label{eq:l1:iso:pd}
|p| :=  \sqrt{p_1^2 + p_2^2}  \ \  (\text{isotropic})\quad \text{or} \quad |p|_1 = |p_1|+|p_2| \  \ (\text{anisotropic}),
\end{equation}
where $p=(p_1, p_2)^{T} \in \mathbb{R}^2$.
We denote $|\cdot|$ as the Euclidean norm including the absolute value for real valued scalar.
Now introduce the image domain $\Omega \subset \ZZ^2$ which is 
 the discretized grid \cite{CP}
\[
\Omega = \{(i,j)\ |\ i,j \in \mathbb{N}, \ 1 \leq i \leq M,
1 \leq j \leq N\}
\]
where $M, N$ are the image dimensions. 
With discrete image domain $\Omega$, 
we define the discrete image space as
$X = \{u: \Omega \to \RR\}$ with the standard $L^2$ scalar product.
Finite differences are used to discretize the operator $\nabla$ and its
adjoint operator $\nabla^{*} = -\Div$
with homogeneous Neumann and Dirichlet boundary conditions, respectively.
We define $\nabla$ as the following operator
\[
(\nabla u)_{i,j} = ((\nabla_{1}u)_{i,j}, (\nabla_{2} u)_{i,j})^T,
\]
where forward differences are taken according to
\begin{align*}
(\nabla_{1}u)_{i,j} = \begin{cases} u_{i+1,j} -u_{i,j}, \ & \text{if} \ 1 \leq i < M, \\
0, \ & \text{if} \ i = M,
\end{cases} \quad
(\nabla_{2}u)_{i,j} = \begin{cases} u_{i,j+1} -u_{i,j}, \ & \text{if} \ 1 \leq j < N, \\
0, \ & \text{if} \ j = N.
\end{cases}
\end{align*}
With $Y = X^2$ with the standard product scalar product,
this gives a linear operator $\grad: X \to Y$.
The discrete divergence is then the negative adjoint of $\nabla$ with $\Div = -\nabla^*$,
i.e., the unique linear mapping $\Div: Y \to X$ which satisfies for $\forall u \in X$, $\ p=(p_1,p_2)^T \in Y$
\begin{equation}\label{eq:gradient:div:adjoint}
\langle \nabla u, p\rangle_{Y} = \langle u, \nabla^*p \rangle_{X} = -\langle u,   \Div p \rangle_{X}, 
\end{equation}
where the inner products are defined by
\[
 \langle u, v \rangle_X: =  \sum_{i=1}^{M}\sum_{j=1}^{N}u_{i,j}v_{i,j}, \ \ \langle p, q \rangle_Y: =  \sum_{i=1}^{M}\sum_{j=1}^{N}[(p_1)_{i,j}(q_1)_{i,j}+(p_2)_{i,j}(q_2)_{i,j}], \quad q=(q_1,q_2)^T \in Y. 
 \]
The $\Div$ operator can be computed as 
$\Div p = \nabla_{1}^{-}p_{1} + \nabla_{2}^{-}p_{2}$
and we refer to \cite{CP} for the detailed backward difference operators $\nabla_{1}^{-}$ and $ \nabla_{2}^{-}$. 
%For the discrete version of
%the $\TV$,
%we still need the discrete versions of the $L^1$, $L^2$ and
%$L^{\infty}$ norms: For $u \in U$, $p = (p^1,p^2) \in V$, $1 \leq t < \infty$,
%\begin{gather*}
%\|u\|_t = \Bigl( \sum_{(i,j) \in \Omega} |u_{i,j}|^t\Bigr)^{1/t}, \quad
%\|u\|_\infty = \max_{(i,j) \in \Omega} \ |u_{i,j}|, \\
%\|p\|_t = \Bigl( \sum_{(i,j) \in \Omega}
%\bigl( (p^1_{i,j})^2 + (p^2_{i,j})^2 \bigr)^{t/2}\Bigr)^{1/t}, \quad
%\|p\|_\infty = \max_{(i,j) \in \Omega} \ \sqrt{(p^1_{i,j})^2 + (p^2_{i,j})^2}.
%\end{gather*}

By  \eqref{eq:l1:iso:pd}, the isotropic or anisotropic TV in \eqref{eq:ROF} is as follows,
\begin{equation}\label{eq:alm:up:ani:pd1}
\|\nabla u\|_{1}: = \int_{\Omega}|\nabla u|dx, \quad \text{or} \quad \|\nabla u\|_{1}: = \int_{\Omega}|\nabla_1 u|dx + \int_{\Omega}|\nabla_2 u|dx,
\end{equation}
where $dx$ in \eqref{eq:alm:up:ani:pd1} is the area element of $\mathbb{R}^2$ and the integrals in \eqref{eq:alm:up:ani:pd1} are understood as the finite summation over all pixels. 
With these preparations, by the Fenchel-Rockafellar duality theory \cite{HBPL,KK},  the primal-dual form of \eqref{eq:ROF} can be written as
\begin{equation}
  \label{eq:tv-denoising-saddle}
  \min_{\substack{u \in X}} \max_{\substack{\lambda \in Y}} \
  D(u) + \scp[Y]{\grad u}{\lambda} -
  \mI_{\{\norm[\infty]{\lambda} \leq \alpha\}}(\lambda),
\end{equation}
where $\mI$ is the indicator function and the dual form of \eqref{eq:ROF} can be written 
 \begin{equation}\label{eq:dual:rof}
 \max_{\lambda \in Y}\left\{-\left(d(\lambda) := \frac{1}{2} \| \Div \lambda + f\|_{H^{-1}}^2 -\frac{1}{2}\|z\|_{2}^2+   \mI_{\{\norm[\infty]{\lambda} \leq \alpha\}}(\lambda)\right)\right\}. \tag{D}
 \end{equation}
The notations $f$ and the positive definite, linear operator $H$ are as follows 
 \begin{equation}\label{eq:def:H}
 f := K^*z, \quad H := -\mu \Delta + K^*K, \ \  \|u\|_{H^{-1}}^2 = \langle H^{-1}u, u\rangle_{X}, \ \  \|u\|_{H}^2 = \langle Hu, u\rangle_{X}.
 \end{equation}
 We also need the following norms for arbitrary $u \in X$, $p = (p_1,p_2) \in Y$, $1 \leq t < \infty$,
 \begin{gather*}
 \|u\|_t = \Bigl( \sum_{(i,j) \in \Omega} |u_{i,j}|^t\Bigr)^{1/t}, \quad
 \|p\|_t = \Bigl( \sum_{(i,j) \in \Omega}
 \bigl[ ((p_1)_{i,j})^2 + ((p_2)_{i,j})^2 \bigr]^{t/2}\Bigr)^{1/t}.
 \end{gather*}
 The $\norm[\infty]{\lambda}$ is defined as follows. For the isotropic case, 
 \[
 \|\lambda\|_\infty = \max_{(i,j) \in \Omega} \ \sqrt{({(\lambda_1)}_{i,j})^2 + ({\lambda_2)}_{i,j})^2}.
 \]
 For the anisotropic case, with \eqref{eq:alm:up:ani:pd1} and applying the Fenchel-Rockafellar duality theory for $\|\nabla_1 u\|_1$ and $\| \nabla_2 u\|_1$  separately, we have
 \[
 \mI_{\{\norm[\infty]{\lambda} \leq \alpha\}}(\lambda) = \mI_{\{\norm[\infty]{\lambda_1} \leq \alpha\}}(\lambda_1) +  \mI_{\{\norm[\infty]{\lambda_2} \leq \alpha\}}(\lambda_2), \quad  \|\lambda_k\|_\infty = \max_{(i,j) \in \Omega} \ |{(\lambda_k)}_{i,j}|, \ \ k=1, 2.
 \]
The optimality conditions on the saddle points $(\bar u, \bar \lambda)$ are as follows
\begin{equation}\label{eq:opti:primaldual}
 H \bar u- f + \nabla^*\bar \lambda = 0, \quad 
 \nabla \bar u \in \partial \mI_{\{\norm[\infty]{\bar \lambda} \leq \alpha\}}(\bar \lambda). 
\end{equation}
%The constraint $\mI_{\{\norm[\infty]{\lambda} \leq \alpha\}}(\lambda)$ in \eqref{eq:tv-denoising-saddle} and \eqref{eq:dual:rof} for the isotropic case means 
%\begin{equation}\label{eq:iso:infi}
%|\lambda| = \sqrt{\lambda_1^2 + \lambda_2^2} \leq \alpha,
%\end{equation}
%and while for the anisotropic case, the constraint means
%\begin{equation}\label{eq:ani:infi}
%|\lambda_1| + |\lambda_2| \leq \alpha, \quad i=1,2.
%\end{equation}
By the Fenchel-Rockafellar duality theory, we have $\bar \lambda \in \partial (\alpha \|  \nabla \bar u\|_{1})$ and
\begin{equation}
 \langle  \bar \lambda, \nabla \bar u\rangle  =  \mI_{\{\norm[\infty]{\bar \lambda} \leq \alpha\}}(\bar \lambda) + \alpha \| \nabla \bar u\|_{1}.
\end{equation}
The optimality condition for $\bar \lambda$ in \eqref{eq:opti:primaldual} is also equivalent to 
\begin{equation}
\mP_{\alpha}(\bar \lambda + c_0 \nabla \bar u) = \bar \lambda, \quad  \text{for all constant} \  c_0 >0,
\end{equation}
where $\mathcal{P}_{\alpha}$ is the projection to the feasible set $\{\lambda :\norm[\infty]{\lambda} \leq \alpha\}$, i.e.,
\begin{equation}\label{eq:projection:ani:iso}
\mathcal{P}_{\alpha}(\lambda) =\frac{p}{\max(1.0, {|\lambda|}/{\alpha})} \ \  \text{or} \ \ \mathcal{P}_{\alpha}(\lambda) =\left(\frac{\lambda_1}{\max(1, {|\lambda_1|}/{\alpha})},\frac{\lambda_2}{\max(1, {|\lambda_2|}/{\alpha})}\right)^{T}.
\end{equation}
%\begin{equation}\label{eq:l1:iso:pd}
%|p| :=  \sqrt{p_1^2 + p_2^2}  \ \  (\text{anisotropic })\quad \text{or} \quad |p|_1 = |p_1|+|p_2| \  \ (\text{isotropic}).
%\end{equation}
Introducing $p:=\nabla u$, the problem \eqref{eq:ROF} becomes the following constrained optimization problem
\[
\min_{x\in X} D(u) + \alpha\|p\|_{1}, \quad \text{such that} \ \ \nabla u=p.
\]
The augmented Lagrangian method thus follows, with nondecreasing update of $\sigma_k \rightarrow c_{\infty} < +\infty$, 
\begin{align}
(u^{k+1}, p^{k+1}) &= \argmin_{u,p} L(u,p;\lambda^k) :=D(u) + \alpha \|p\|_{1} + \langle \lambda^k, \nabla u -p \rangle + \frac{\sigma_k}{2}\|\nabla u -p\|_{2}^2, \label{eq:alm:up}\\
\lambda^{k+1} &= \lambda^k + \sigma_k(\nabla u^{k+1} - p^{k+1}).\label{eq:update:lambda}
\end{align}
We will use the semismooth Newton method to solve \eqref{eq:alm:up}. The optimality conditions with fixed $\sigma_k$ and $\lambda^k$ for \eqref{eq:alm:up} are 
\begin{align}
&&Hu - f + \nabla^* \lambda^k + \sigma_k \nabla^*(\nabla u - p) = 0, \label{eq:opti:u} \\
&& \partial \alpha \| p\|_{1} - \lambda^k + \sigma_k(p-\nabla u) \ni 0,\label{eq:opti:p}
\end{align}
where $(u,p)=(u^{k+1},p^{k+1})$ are the optimal solutions of \eqref{eq:alm:up}. 
%Here and in the following, we assume 
%\[
%\langle \nabla u, v \rangle = \langle u, \nabla^* v \rangle.  
%\]
The equation \eqref{eq:opti:p} leads to
\begin{equation}\label{eq:moreau:p:update}
\lambda^k + \sigma_k \nabla u \in (\sigma_k I + \partial \alpha \|\cdot\|_{1})p \Rightarrow p  = (I + \frac{1}{\sigma_k} \partial \alpha \|\cdot\|_{1})^{-1}(\frac{\lambda^k + \sigma_k \nabla u}{\sigma_k}): = S_{\frac{\alpha}{\sigma_k}}(\frac{\lambda^k}{\sigma_k} + \nabla u),
\end{equation}
where $S_{\frac{\alpha}{\sigma_k}}(\cdot)$ is the soft thresholding operator for the isotropic or anisptropic $\|\cdot\|_1$ norm.
With relation \eqref{eq:moreau:p:update}, the augmented Lagrangian $L(u,p;\lambda^k)$ can be reformulated as
\begin{align}
&\Phi_k(u; \lambda^k, \sigma_k): = L(u, S_{\frac{\alpha}{\sigma_k}}(\frac{\lambda^k}{\sigma_k} + \nabla u); \lambda^k) \notag \\
&= D(u) + \alpha \| S_{\frac{\alpha}{\sigma_k}}(\frac{\lambda^k}{\sigma_k} + \nabla u)\|_{1}  + \frac{\sigma_k}{2}\|\frac{\lambda^k}{\sigma_k}+ \nabla u  - S_{\frac{\alpha}{\sigma_k}}(\frac{\lambda^k}{\sigma_k} + \nabla u)\|_2^2 -\frac{1}{2\sigma_k}\|\lambda^k\|_2^2, \label{eq:augmented:lagrangian:only:u}
\end{align}
which will be more convenient than \eqref{eq:alm:up} once the globalization strategy including the line search is employed.
Substituting $p$ of \eqref{eq:moreau:p:update} into \eqref{eq:opti:u}, we get
\begin{equation}\label{eq:u:alm:suntoh1}
Hu - f + \nabla^* \lambda^k + \sigma_k \nabla^*\nabla u -\sigma_k \nabla^*(I + \frac{1}{\sigma_k} \partial \alpha \|\cdot\|_{1})^{-1}(\frac{\lambda^k + \sigma_k \nabla u}{\sigma_k})=0.
\end{equation}
Denoting $G^*(p) = \alpha \|p\|_{1}$, we see the Fenchel dual function of $G^*$ is $G(h) = I_{\{ \|\cdot\|_{\infty} \leq \alpha\}}(h)$. With the Moreau's identity, 
\begin{equation}\label{eq:moreau:indentity}
x = (I + \tau \partial G)^{-1}(x) + \tau (I  +\frac{1}{\tau} \partial G^*)^{-1}(\frac{x}{\tau}),
\end{equation}
we arrive at
\begin{equation}\label{eq:moreau:sub}
\sigma_k p = \sigma_k(I + \frac{1}{\sigma_k} \partial \alpha \|\cdot\|_{1})^{-1}(\frac{\lambda^k + \sigma_k \nabla u}{\sigma_k})=\lambda^k + \sigma_k \nabla u  - (I + \sigma_k \partial G)^{-1}(\lambda^k + \sigma_k \nabla u ).
\end{equation}
Substituting $p$ of \eqref{eq:moreau:sub} into \eqref{eq:opti:u} leads to the equation of $u$
\begin{equation}\label{eq:u:proj:solve}
Hu - f +   \nabla ^*   (I + \sigma_k \partial G)^{-1}(\lambda^k + \sigma_k \nabla u ) = 0.
\end{equation}
Indeed, we can solve \eqref{eq:u:alm:suntoh1} directly with semismooth Newton methods, which will be discussed in the subsequent sections. Now let's turn to another formulation by introducing an auxiliary variable
\begin{equation}\label{eq:opti:modi:pro}
h: = (I + \sigma_k \partial G)^{-1}(\lambda^k + \sigma_k \nabla u ) = \mathcal{P}_{\alpha}(\lambda^k + \sigma_k \nabla u).
\end{equation}
By the definition of the projection \eqref{eq:projection:ani:iso} and taking the isotropic  case for example,  \eqref{eq:opti:modi:pro} becomes
\begin{equation}\label{eq:proj:cons}
h = \mathcal{P}_{\alpha}(\lambda^k + \sigma_k \nabla u) = \dfrac{\lambda^k + \sigma_k \nabla u}{\max(1.0, {|\lambda^k + \sigma_k \nabla u|}/{\alpha})}.
\end{equation}
The equation \eqref{eq:u:proj:solve} thus becomes
\begin{equation}\label{eq:opti:modi:h}
Hu - f + \nabla^*h = 0.
\end{equation}
Combining \eqref{eq:opti:modi:pro}, \eqref{eq:proj:cons} and \eqref{eq:opti:modi:h}, we get the following equations of $(u, \lambda)$ instead of $(u,p)$, 
\begin{equation}\label{eq:opti:u:lambda}
\mathcal{F}(u,h) = \begin{bmatrix} 0 \\ 0 \end{bmatrix}, \quad \mathcal{F}(u,h):=
\begin{bmatrix}
Hu - f + \nabla^*h \\
- \sigma_k \nabla u -  \lambda^k + \max \big(1.0, \dfrac{|\lambda^k + \sigma_k \nabla u|}{\alpha} \big) h
\end{bmatrix}
.
\end{equation}
%It can also be reformulated as
%\begin{equation}\label{eq:opti:u:lambda:similar}
%\mathcal{F}(u,h) = 
%\begin{bmatrix}
%u - f + \nabla^*h \\
%- \alpha \nabla u -  \dfrac{\alpha}{\sigma_k} \lambda^k + \max \big(\dfrac{\alpha}{\sigma_k}, |\dfrac{\lambda^k}{\sigma_k} + \nabla u| \big) h
%\end{bmatrix}
%= \begin{bmatrix} 0 \\ 0 \end{bmatrix}.
%\end{equation}
Once solving any of the equations \eqref{eq:u:alm:suntoh1} and \eqref{eq:opti:u:lambda} and obataining the solution $u$, we can update the $p$ by \eqref{eq:moreau:p:update} or \eqref{eq:moreau:sub} accordingly. Then the Lagrangian multiplier $\lambda^{k+1}$ can be updated by \eqref{eq:update:lambda}. Actually, according to \eqref{eq:moreau:sub},
%\begin{equation}
%(I + \sigma_k \partial G)^{-1}(\lambda^k + \sigma_k \nabla u )=\lambda^k+\sigma_k\nabla u -\sigma_k p.
%\end{equation}
compared to the update of $\lambda^{k+1}$ \eqref{eq:update:lambda}, we see the update of the multiplier can also be
\begin{equation}\label{eq:update:multiplier:projection}
\lambda^{k+1} = (I + \sigma_k \partial G)^{-1}(\lambda^k + \sigma_k \nabla u )=\mathcal{P}_{\alpha}(\lambda^k + \sigma_k \nabla u),
\end{equation}
which is a nonlinear update compared to the linear update \eqref{eq:update:lambda}.  We refer to \cite{KK} (chapter 4) for general nonlinear updates of Lagrangian multipliers with different derivations and framework of ALM. 
Throughout this paper,  we will use the semismooth Newton methods to any of the subproblems \eqref{eq:u:alm:suntoh1} and \eqref{eq:opti:u:lambda}.  The different formulations of \eqref{eq:u:alm:suntoh1} and \eqref{eq:opti:u:lambda} will bring out different algorithms and it would turn out different efficiency.
 %ALM with the update of $u^{k+1}$ in \eqref{eq:u:proj:solve} and $\lambda^{k+1}$ in \eqref{eq:update:multiplier:projection} was pointed out in \cite{KK} (chapter 4.7.2) without solvers including semismooth Newton method and numerics. 
 We begin with the semismoothness where the Newton derivative can be chosen as the Clarke generalized derivative \cite{KK,KKU1}. 
\begin{definition}[Newton differentiable and Newton Derivative \cite{KK}] $F: D \subset X \rightarrow Z$ is called Newton differentiable at $x$ if there exist an open neighborhood $N(x) \subset D$ and mapping $G: N(x)\rightarrow \mathcal{L}(X,Z)$ such that (Here the spaces $X$ and $Z$ are Banach spaces.)
	\begin{equation}
	\lim_{|h|\rightarrow 0} \frac{|F(x+h)-F(x)-G(x+h)h|_{Z}}{|h|_X}=0.
	\end{equation}
	The family $\{G(s): s \in N(x)\}$ is called an Newton derivative of $F$ at $x$.
\end{definition}
If $F:\mathbb{R}^n \rightarrow \mathbb{R}^m$ and the set of mapping $G$ is the Clarke generalized derivative $\partial F$, we call $F$ is semismooth \cite{KKU}. 
\begin{definition}[Semismoothness \cite{RM, LST, MU}] Let $F: O \subseteq X \rightarrow Y$ be a locally Lipschitz continuous function on the open set $O$. $F$ is said to be semismooth at $x \in O$ if $F$ is directionally differentiable at $x$ and for any $V\in \partial F(x+ \Delta x)$ with $\Delta x \rightarrow 0$,
	\[
	F(x+\Delta x) -F(x) - V\Delta x = {o}(\|\Delta x\|).
	\]  
\end{definition}
 The Newton derivatives of vector-valued functions can be computed component-wisely \cite{MU} (Proposition 2.10) or \cite{Cla} (Theorem 9.4). Together with the definition of semismoothness, we have the following lemma.
\begin{lemma}\label{lem:vector:semismooth:newton}
	Suppose $F : \mathbb{R}^n \rightarrow \mathbb{R}^m$ and $F = (F_1(x), F_2(x), \cdots, F_l(x))^{T}$ with $F_i : \mathbb{R}^n \rightarrow \mathbb{R}^{l_i} $  being semismooth. Here $l_i \in \mathbb{Z}^{+}$ and $\sum_{i=1}^l l_i=m$.  Assuming the Newton derivative $ D_N F_i(x) \in \partial F_i(x)$, $i  = 1,2,\cdots, l$, then the Newton derivative of $F$ can be chosen as
	\begin{equation}
	D_N F(x) = \begin{bmatrix}
	D_N F_1(x), D_N F_2(x)
, \cdots, D_N F_l(x)	\end{bmatrix}^T.
	\end{equation}
\end{lemma}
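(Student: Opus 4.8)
The plan is to verify directly that the vertically stacked matrix satisfies the defining limit of a Newton derivative, exploiting the block structure of the residual. Since each component map $F_i$ is semismooth and, by hypothesis, $D_N F_i(x) \in \partial F_i(x)$, applying the semismoothness of $F_i$ with the admissible choice $V = D_N F_i(x + \Delta x) \in \partial F_i(x + \Delta x)$ yields, for every $i = 1, \dots, l$,
\begin{equation*}
F_i(x + \Delta x) - F_i(x) - D_N F_i(x + \Delta x)\, \Delta x = o(\|\Delta x\|) \qquad \text{as } \Delta x \to 0 .
\end{equation*}
These are the building blocks of the argument.

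Next I would observe that multiplication by the stacked matrix respects the block partition. Writing $r(\Delta x) := F(x+\Delta x) - F(x) - D_N F(x+\Delta x)\,\Delta x$, its $i$-th block (of length $l_i$) is precisely $F_i(x+\Delta x) - F_i(x) - D_N F_i(x+\Delta x)\,\Delta x$, because $D_N F(x+\Delta x)$ is by construction the vertical stacking of the $D_N F_i(x+\Delta x)$. Hence each block of $r(\Delta x)$ is $o(\|\Delta x\|)$. Using the Euclidean norm on $\RR^m$ together with this partition,
\begin{equation*}
\frac{\|r(\Delta x)\|^2}{\|\Delta x\|^2} = \sum_{i=1}^{l} \frac{\|F_i(x+\Delta x) - F_i(x) - D_N F_i(x+\Delta x)\,\Delta x\|^2}{\|\Delta x\|^2} ,
\end{equation*}
which is a finite sum of terms each tending to $0$ as $\Delta x \to 0$. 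Therefore $\|r(\Delta x)\| = o(\|\Delta x\|)$, so the stacked matrix meets the Newton-differentiability limit with $\Delta x$ playing the role of the increment $h$; and since all norms on $\RR^m$ are equivalent in finite dimensions, the conclusion is independent of the chosen norm.

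Finally I would record why the stacked matrix is an admissible choice of generalized derivative and why $F$ itself is semismooth, so that the displayed estimate is genuinely a semismoothness statement and not a bare algebraic identity. Directional differentiability of $F$ at $x$ follows immediately from that of each $F_i$, computed componentwise; and the componentwise computation of Newton derivatives for vector-valued maps invoked just before the lemma (see \cite{MU}, Proposition 2.10, or \cite{Cla}, Theorem 9.4) guarantees that the vertical stacking of elements $D_N F_i \in \partial F_i$ is a legitimate member of the generalized derivative used for $F$. I expect this last point -- justifying membership of the block matrix in the appropriate generalized-derivative set, rather than the $o$-estimate itself -- to be the only non-mechanical step, since in general the Clarke Jacobian of a stacked map is merely contained in, not equal to, the product of the component Clarke Jacobians; the cited componentwise results are exactly what close this gap at the level of Newton derivatives.
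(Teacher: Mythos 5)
Your argument is correct. Note, however, that the paper does not actually prove this lemma: it states it as an immediate consequence of the componentwise computation of Newton derivatives cited just beforehand (\cite{MU}, Proposition 2.10; \cite{Cla}, Theorem 9.4) combined with the definition of semismoothness, and no proof environment follows. What you supply is the self-contained verification that those citations stand in for: you feed the admissible selection $V=D_NF_i(x+\Delta x)\in\partial F_i(x+\Delta x)$ into the semismoothness estimate for each block, observe that the residual of the stacked map splits along the block partition, and sum the squared block residuals to get the $o(\|\Delta x\|)$ bound for $F$. This is exactly the content of the cited propositions, so the two routes are the same in substance; yours simply makes the mechanism explicit. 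Your closing remark is also the right one to flag: since the paper's Definition 1 of a Newton derivative only requires the limit condition and not membership in $\partial F(x)$, the fact that the Clarke Jacobian of the stacked map may be strictly smaller than the product $\partial F_1(x)\times\cdots\times\partial F_l(x)$ does not threaten the conclusion --- your direct verification of the $o$-estimate already establishes everything the lemma claims, and the membership question only matters if one insists on the stronger statement that the stacked matrix lies in $\partial F$.
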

Now we turn to the semismooth Newton method for solving \eqref{eq:u:alm:suntoh1} and \eqref{eq:opti:u:lambda}. The semismooth Newton method for the general nonlinear equation $F(x)=0$  can be written as 
\begin{equation}\label{semi:smoothnewton:cal:newton:direc}
\mV (x^l)^{-1}\delta x^{l+1} =  - F (x^l),
\end{equation}
where $\mV(x^l)$ is a semismooth Newton derivative of $F$ at $x^l$, for example $\mV(x^l) \in \partial F(x^l)$.  Additionally, $\mV(x)$ needs to satisfy the \emph{regularity condition} \cite{Cla, MU}. Henceforth, we say   $\mV(x)$ satisfies the \emph{regularity condition} if  $\mV(x)^{-1}$ exist and are uniformly bounded in a small neighborhood of the solution $x^*$ of $F(x^*)=0$. When the globalization strategy including line search is necessary, 
one can get the Newton update $x^{l+1}$ with the Newton direction $\delta x^{l+1}$ in \eqref{semi:smoothnewton:cal:newton:direc}.
Once the globalization strategy is not needed, the semismooth Newton iteration can also be written as
follows with solving $x^{l+1}$ directly
\begin{equation}\label{semi:smoothnewton:sys}
\mV(x^l)x^{l+1} = \mV(x^l)x^l  - F(x^l).
\end{equation}
\section{Augmented Lagrangian Method with Primal-Dual Semismooth Newton Method} \label{sec:alm:pdssn}
\subsection{The Isotropic Total Variation}

%Notice that if a function $F$ is semismooth, then it is Newton differentiable. If $F$ is Newton differentiable, it is not necessarily semismooth since the Newton derivative can. 
Now we turn to the semismoothness of the nonlinear system \eqref{eq:opti:u:lambda}. The only nonlinear or nonsmooth part comes from the function $\max (1.0, |\lambda^k + \sigma_k \nabla u|/\alpha)$.
\begin{lemma}\label{lem:semismooth:max}
The function $\mG(u):=\max (1.0, \dfrac{|\lambda^k + \sigma_k \nabla u|}{\alpha})$  is semismooth on $u$ and its Clarke's generalized gradient for $u$ is as follows,% G\^ateaux 
\begin{equation}\label{eq:newton:deri:max:iso}
\left\{\chi^s_{\lambda^k, u}\dfrac{\sigma_k}{\alpha}\dfrac{\langle \lambda^k + \sigma_k \nabla u,  \nabla \cdot \ \rangle }{ |\lambda^k + \sigma_k \nabla u |} \ | \ s\in[0,1]\right\} = \partial_{u}(\max (1.0, \dfrac{|\lambda^k + \sigma_k \nabla u|}{\alpha})),
\end{equation}
where $\chi^s_{\lambda^k, u}$ is the generalized Clarke derivatives of $\max(\cdot, 1.0)$,
\begin{equation}\label{eq:newton:deri:max}
\chi^s_{\lambda^k, u} = \begin{cases}
1, \quad & |\lambda^k +\sigma_k \nabla u | /\alpha >1.0, \\
s, \quad & |\lambda^k +\sigma_k \nabla u | /\alpha=1.0, \  s\in [0,1], \\ 
0, \quad & |\lambda^k  + \sigma_k \nabla u | / \alpha <1.0.
\end{cases}
\end{equation}
Furthermore,  $\mF(u,h)$  in \eqref{eq:opti:u:lambda} is semismooth on $(u,h)$. Henceforth, we choose $s=1$ for the Newton derivative of $\mG(u)$ in \eqref{eq:newton:deri:max:iso}  and denote $\chi_{\lambda^k, u} := \chi^1_{\lambda^k, u}$.
\end{lemma}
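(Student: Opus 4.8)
The plan is to recognise $\mG$ as a composition of elementary maps and to exploit the fact that semismoothness is stable under composition and products, reading off the generalized gradient from the Clarke chain rule. First I would reduce everything to a pixelwise statement: since $\nabla u$, the Euclidean norm $|\cdot|$ and $\max(1.0,\cdot/\alpha)$ all act componentwise over $\Omega$, the map $\mG$ is a stacking of scalar functions $\mG_{i,j}(u)=\phi\bigl(g_{i,j}(u)\bigr)$, where $\phi(t):=\max(1.0,t)$ and $g_{i,j}(u):=|(\lambda^k+\sigma_k\nabla u)_{i,j}|/\alpha$. By Lemma \ref{lem:vector:semismooth:newton} it then suffices to treat a single scalar component and assemble. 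For a fixed pixel I would further factor $g_{i,j}=\psi\circ A$, where $A(u):=(\lambda^k+\sigma_k\nabla u)_{i,j}$ is affine (hence $C^\infty$) and $\psi(v):=|v|/\alpha$ is the scaled Euclidean norm on $\RR^2$.

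Next I would invoke the standard building blocks. The function $\phi$ is piecewise affine, hence semismooth, with Clarke gradient given exactly by the three cases in \eqref{eq:newton:deri:max}; the Euclidean norm $\psi$ is semismooth everywhere and is $C^{\infty}$ on $\RR^2\setminus\{0\}$; affine maps are smooth. Since the composition of semismooth functions is again semismooth \cite{MU}, $\mG_{i,j}=\phi\circ\psi\circ A$ is semismooth, and by Lemma \ref{lem:vector:semismooth:newton} so is $\mG$.

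The one genuine subtlety, and the step I expect to be the crux, is the non-differentiability of $\psi$ at $v=0$. Here I would argue that this singularity is never inherited by $\mG_{i,j}$. If $(\lambda^k+\sigma_k\nabla u)_{i,j}=0$, then $g_{i,j}=0<1.0$, so on a whole neighborhood $\mG_{i,j}\equiv 1.0$ is constant, hence smooth with zero derivative, consistently with $\chi^s_{\lambda^k,u}=0$ in that regime. Conversely, at every point where $\phi$ kinks, namely where $g_{i,j}=1.0$, the inner argument satisfies $|(\lambda^k+\sigma_k\nabla u)_{i,j}|=\alpha>0$, so $\psi$ is $C^1$ there. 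Thus the only kink of $\mG_{i,j}$ sits on the smooth set of $\psi$, and the norm's singularity at the origin is screened off by the $\max$; this is precisely what makes the seemingly formal quotient in \eqref{eq:newton:deri:max:iso} well defined.

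Finally I would compute the generalized gradient by the Clarke chain rule. Because the inner map $\psi\circ A$ is $C^1$ on the set where $(\lambda^k+\sigma_k\nabla u)_{i,j}\neq 0$, which contains all points where $\phi$ is nonsmooth, the chain rule holds with equality, giving $\partial_u\mG_{i,j}=\partial\phi(g_{i,j})\,\nabla g_{i,j}$. Substituting $\nabla(\psi\circ A)[\,\cdot\,]=\frac{\sigma_k}{\alpha}\frac{\langle \lambda^k+\sigma_k\nabla u,\nabla\cdot\rangle}{|\lambda^k+\sigma_k\nabla u|}$ (understood pixelwise) and $\partial\phi=\{\chi^s_{\lambda^k,u}\}$ yields exactly \eqref{eq:newton:deri:max:iso}, with the convention that $\chi^s_{\lambda^k,u}=0$ annihilates the quotient wherever the argument vanishes. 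For the last claim I would note that the first row of $\mF$ in \eqref{eq:opti:u:lambda} is affine in $(u,h)$, hence smooth, while the second row equals $-\sigma_k\nabla u-\lambda^k+\mG(u)\,h$, an affine term plus the product of the semismooth $\mG(u)$ with the smooth linear map $(u,h)\mapsto h$; since products of semismooth functions are semismooth, each row is semismooth, and Lemma \ref{lem:vector:semismooth:newton} gives semismoothness of $\mF$ on $(u,h)$. Choosing $s=1$ merely fixes one admissible element of the Clarke gradient as the working Newton derivative.
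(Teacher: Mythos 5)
Your proof is correct, and it rests on the same two observations that carry the paper's argument: (i) wherever the inner vector $\lambda^k+\sigma_k\nabla u$ vanishes, the quotient $|\lambda^k+\sigma_k\nabla u|/\alpha$ is below $1$, so $\mG$ is locally constant and the norm's singularity at the origin never reaches $\mG$; and (ii) on the kink set $|\lambda^k+\sigma_k\nabla u|=\alpha>0$ the inner map is smooth, so the only nonsmoothness comes from $\max(1.0,\cdot)$. Where you differ is in the formal machinery used to convert these observations into the conclusion. The paper works in the piecewise-differentiable calculus: it exhibits $\mG_1\equiv 1.0$ and $\mG_2=|\lambda^k+\sigma_k\nabla u|/\alpha$ as selection functions, concludes that $\mG$ is $PC^\infty$ (hence $PC^1$, hence semismooth by Proposition 2.26 of \cite{MU}), and reads off $\partial_u\mG=\mathrm{co}\{\nabla_u\mG_1,\nabla_u\mG_2\}$ from Proposition 4.3.1 of \cite{Sch}. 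You instead factor $\mG_{i,j}=\phi\circ\psi\circ A$ and use closure of semismoothness under composition together with the Clarke chain rule, with equality in the chain rule because $\phi=\max(1.0,\cdot)$ is convex (hence regular) and the inner map is $C^1$ near the kink set. Both routes are standard and equally rigorous here; the selection-function route gives the convex-hull formula for $\partial_u\mG$ directly without having to justify equality in the chain rule, while your composition route is more modular and makes explicit (pixelwise, via Lemma \ref{lem:vector:semismooth:newton}) why the ill-defined quotient in \eqref{eq:newton:deri:max:iso} is harmless under the convention that $\chi^s_{\lambda^k,u}=0$ annihilates it. Your treatment of the final claim about $\mF$ (affine first row, plus a product of the semismooth scalar field $\mG(u)$ with the linear coordinate $h$) is also sound and matches the paper's appeal to component-wise semismoothness.
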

\begin{proof}
	 We claim that $\mG(u)$ is actually a $PC^{\infty}$ function of $u$. Introduce $\mG_1(u) = 1.0$ and $\mG_2(u) = {|\lambda^k + \sigma_k \nabla u|}/{\alpha}$ which are \emph{selection functions} of $\mG(u)$. $\mG(u)$ is called as the \emph{continuous selection} of the functions $\mG_1(u)$ and $\mG_2(u)$ \cite{Sch} (Chapter 4) (or Definition 4.5.1 of \cite{FP}). We see $\mG_1(u)$ is a smooth function  and $\mG_2(u)$ is smooth in any open set outside the closed set $D_0:=\{u \ | \ |\lambda^k + \sigma_k \nabla u|=0\}$. Thus for any $u\in D_{\alpha}:=\{u \ | \ |\lambda^k + \sigma_k \nabla u|=\alpha\}$, there exists a small open neighborhood of $u$ such that $\mG_1(u)$ and $\mG_2(u)$ are smooth functions.  We thus conclude that $\mG(u)$ is a $PC^{\infty}$ function of $u$. $\mG(u)$ is also $PC^1$ and hence is semismooth on $u$ \cite{MU} (Proposition 2.26), since $\nabla_u \mG_1(u) = 0$ and $\nabla_u \mG_2(u) = {\sigma_k\langle \lambda^k + \sigma_k \nabla u,  \nabla \cdot \ \rangle }/{ (\alpha|\lambda^k + \sigma_k \nabla u |)}$ out side $D_0$. For any $u \in D_{\alpha}$, by \cite{Sch} (Proposition 4.3.1), we see
	\[
	\partial_u \mG(u) = \text{co}\{\nabla_u \mG_1(u), \nabla_u \mG_2(u)\},
	\]
where ``$\text{co}$" denotes the convex hull of the corresponding set \cite{CL}.
%	and $L(y) = \max(y_1, y_2)$ with $y=(y_1, y_2)^{T}$. $L(y)$ is a piecewise smooth function $PC^{\infty}$ and thus is a semismooth function of $y$ \cite{Cla, MU}. Introduce $y_1(x) = 1.0$ and $y_2(x)=|x+\lambda^k|/\alpha$ with $x=(x_1,x_2)^{T}$. The Euclidean norm $|\cdot|$ is semismooth \cite{KK, MU}. Then the function $H(x): = L(y_1(x), y_2(x)) = \max(1.0, |x+\lambda^k|/\alpha)$ is a semismooth function of $x$ \cite{RM} (Theorem 5).   The semismooth of the function $H(\sigma_k \nabla u) = \max (1.0, {|\lambda^k + \sigma_k \nabla u|}/{\alpha})=G(u)$ on $u$ follows since $\sigma_k\nabla$ is a linear and bounded operator in finite dimensional spaces. 
%	The semismoothness of the function can be found in \cite{HK2}. It can also be seen as follows.
%	The Newton derivative of $H(x)$ can be found in \cite{KK} (Chapter 8). Together with the convexity of $H(x)$ and $\dom H = \mathbb{R}^2$, we have \cite{HBPL} (Theorem 16.37)
%	\[
%	\partial_u G(u) = \sigma_k \nabla^* (\partial_x H(x)|_{x = \sigma_k \nabla u})
%	\]
%   and 
   We thus obtain the equation \eqref{eq:newton:deri:max:iso}. Since each component of $\mathcal{F}(u,h)$ is an affine function on $h$ and is semismoooth on $(u,h)$, the semismooth of $\mathcal{F}(u,h)$ on $(u,h)$ then follows \cite{MU} (Proposition 2.10). 
	 %$L(x)$ is also a $PC^1$ function (piecewise differentiable) \cite{KKU1, Cla}. Note that $PC^1$ functions are also locally $PC^1$ functions \cite{KKU1}.  Since the Euclidean norm function  $G(u):=|\lambda^k  + \sigma_k \nabla u |/\alpha$ is locally $PC^1$ function and is Newton differentiable \cite{KKU1} (Chapter 6, Lemma 6.17), the composition of locally $PC^1$ functions $L(G(u))$ is locally $PC^1$ function which is Newton differentiable \cite{KKU1} (Chapter 6, Theorem 6.18). 
\end{proof}
 By Lemma \ref{lem:vector:semismooth:newton} and \ref{lem:semismooth:max}, denoting $x^l = (u^l, h^l)$, $x = (u, h)$ and $\mathcal{F}(u,h) = (\mF_1(u,h), \mF_2(u,h))^{T}$, the Newton derivative of $F(u,h)$ can be chosen as
 \begin{equation}\label{eq:newton:vector:deri}
 D_{N}\mF = (D_N \mF_1(u,h), D_N \mF_2(u,h))^{T}.
 \end{equation}
Thus the Newton derivative of the nonlinear equation \eqref{eq:opti:u:lambda} can be chosen as 
\begin{equation}
\mV^{I}(x^l) = \begin{bmatrix}
H & \nabla^* \\
-\sigma_k \nabla + \chi_{\lambda^k, u^l}\dfrac{\sigma_k}{\alpha}\dfrac{\langle \lambda^k + \sigma_k \nabla u^l,  \nabla \cdot \ \rangle    }{ |\lambda^k + \sigma_k \nabla u^l |}h^l & \max \big(1.0, \dfrac{|\lambda^k + \sigma_k \nabla u^l|}{\alpha}\big)
\end{bmatrix}.
\end{equation}
Let's introduce
\begin{align*}
&D_l = 
U_{\sigma_k}(\lambda^k,u^l): = \max \big(1.0, \dfrac{|\lambda^k + \sigma_k \nabla u^l|}{\alpha}\big),
\quad 
B_l = \begin{bmatrix}
\chi_{\lambda^k, u^l}\dfrac{\sigma_k}{\alpha}\dfrac{\langle \lambda^k + \sigma_k \nabla u^l,  \nabla \cdot \rangle }{ |\lambda^k + \sigma_k \nabla u^l |}h^l
\end{bmatrix},\\
&C_l = -\sigma_k\nabla +B_l.
\end{align*}
It can be readily verified that 
\[
\mV^{I}(x^l)x^l - \mathcal{F}(x^l) =\begin{bmatrix}
f \\ \lambda^k + \chi_{\lambda^k, u^l}\dfrac{\sigma_k}{\alpha}\dfrac{\langle \lambda^k + \sigma_k \nabla u^l,  \nabla u^l \rangle }{ |\lambda^k + \sigma_k \nabla u^l |}h^l 
\end{bmatrix}=\begin{bmatrix} f \\ \lambda^k + B_lu^l \end{bmatrix}:= \begin{bmatrix} f \\ b_2^l \end{bmatrix}.
\]
Next, we turn to solve the Newton update \eqref{semi:smoothnewton:sys}
\begin{equation}\label{eq:system:u:h}
\begin{bmatrix}
H & \nabla^* \\
C_l& D_l
\end{bmatrix}
\begin{bmatrix}
u^{l+1} \\ h^{l+1}
\end{bmatrix}
=\begin{bmatrix} f \\ b_2^l \end{bmatrix}.
\end{equation}
For solving the linear system \eqref{eq:system:u:h}, it is convenient to solve $u^{l+1}$ first, i.e., solving the equation of the Schur complement $\mV^I(x^l)/D_l$.   Substituting 
\begin{equation}\label{eq:ufirst:h}
h^{l+1} =  \bigg( b_2^l  + \sigma_k \nabla u^{l+1} - \chi_{\lambda^k, u^l}\dfrac{\sigma_k}{\alpha}\dfrac{\langle \lambda^k + \sigma_k \nabla u^l,  \nabla u^{l+1} \rangle } { |\lambda^k + \sigma_k \nabla u^l |}h^l \bigg) \bigg/ \bigg(\max \big(1.0, \dfrac{|\lambda^k + \sigma_k \nabla u^l|}{\alpha}\big) \bigg)
\end{equation}
into the first equation on $u^{k+1}$,  we have
\begin{equation}\label{eq:newton:equation:iso}
(H-\nabla^*D_l^{-1}C_l)u^{l+1} =  f + \Div \dfrac{b_2^l}{U_{\sigma_k}( \lambda^k, u^l)},
\end{equation}
which is also the following equation in detail
\begin{align}\label{eq:calculate:u:first}
H u^{l+1} - \Div \dfrac{\sigma_k\nabla u^{l+1} - \chi_{\lambda^k, u^l}\dfrac{\sigma_k}{\alpha}\dfrac{\langle \lambda^k + \sigma_k \nabla u^l,  \nabla u^{l+1} \rangle } { |\lambda^k + \sigma_k \nabla u^l |}h^l}{U_{\sigma_k}( \lambda^k, u^l)} = f + \Div \dfrac{b_2^l}{U_{\sigma_k}( \lambda^k, u^l)}.
\end{align}
After calculating $u^{l+1}$ in \eqref{eq:calculate:u:first}, we get $h^{l+1}$ by \eqref{eq:ufirst:h}.

We can also first calculate $h^{l+1}$ following the calculation of $u^{l+1}$, i.e., solving the equation of the Schur complement $\mV^I(x^l)/H$.
Solving dual variables first can also be found in \cite{HPRS}, where the primal-dual semismooth Newton is employed for total generalized variation.
 Substituting
\begin{equation}\label{eq:u:h:recover}
 u^{l+1} = H^{-1}(\Div h^{l+1} + f),
\end{equation} 
  into \eqref{eq:ufirst:h}, we obtain the linear equation of $h^{l+1}$
  \begin{equation}
  (D_l-C_l H^{-1} \nabla^*)h^{l+1} = b_2^k - C_l H^{-1} f,
  \end{equation}
  which is the folloing equation in detail,
%\begin{subequations}\label{eq:ssn:pdd:h}
\begin{align}
&\max \big(1.0, \dfrac{|\lambda^k + \sigma_k \nabla u^l|}{\alpha}\big)h^{l+1} -\sigma_k \nabla H^{-1} \Div h^{l+1}+\chi_{\lambda^k, u^l}\dfrac{\sigma_k}{\alpha}\dfrac{\langle \lambda^k + \sigma_k \nabla u^l,  \nabla H^{-1}  \Div h^{l+1} \rangle } { |\lambda^k + \sigma_k \nabla u^l |}h^l \notag \\
&= b_2^l -C_{l} H^{-1}f \label{eq:ssn:pdd:h}
%\sigma_k \nabla f -\chi_{\lambda^k, u^l}\dfrac{\sigma_k}{\alpha}\dfrac{\langle %\lambda^k + \sigma_k \nabla u^l,  \nabla  f\rangle } { |\lambda^k + \sigma_k \nabla u^l |}h^l \label{eq:ssn:pdd:h}\\
%& = \lambda^k-C_l H^{-1}f \sigma_k \nabla f +\chi_{\lambda^k, u^l}\dfrac{\sigma_k}{\alpha}\dfrac{\langle \lambda^k + \sigma_k \nabla u^l,  \nabla (u^l-f) \rangle } { |\lambda^k + \sigma_k \nabla u^l |}h^l. \notag
\end{align}
%\end{subequations}
We then recover $u^{l+1}$ by \eqref{eq:u:h:recover} after calculating $h^{l+1}$.

%Similarly, the Newton update for \eqref{eq:opti:u:lambda:similar} is 
%\begin{equation}\label{eq:system:u:h:similiar}
%\begin{bmatrix}
%I & \nabla^* \\
%-\alpha \nabla + \chi_{\lambda^k, u^k}\dfrac{\langle {\lambda^k}/{\sigma_k} +  \nabla u^k,  \nabla \cdot \rangle }{ |{\lambda^k}/{\sigma_k} + \nabla u^k |}h^k & \max \big(\dfrac{\alpha}{\sigma_k}, |{\lambda^k}/{\sigma_k} +  \nabla u^k|\big)
%\end{bmatrix}
%\begin{bmatrix}
%u^{k+1} \\ h^{k+1}
%\end{bmatrix}
%=\begin{bmatrix} f \\ \dfrac{\alpha}{\sigma_k}b_2^k \end{bmatrix}.
%\end{equation}

We have the following lemma for the regularity conditions of the Newton derivative.
\begin{lemma}\label{lem:positive:iso:pd}
	If the feasibility of $h^l$ is satisfied, i.e., $|h^l|\leq \alpha$ by \eqref{eq:opti:modi:pro}, we have the positive definiteness of the Schur complement $\mV^I(x^l)/D_l=(H-\nabla^*D_l^{-1}C_l)$,
	\begin{equation}\label{eq:positive:pd:iso}
	\langle -\nabla^*D_l^{-1}C_lu, u  \rangle_{X} \geq 0 \Rightarrow \langle (\mV^I(x^l)/D_l)u,u \rangle_{X} \geq \|u\|_{H}^2 = \langle Hu,u \rangle_X .
	\end{equation}
	Thus $\mV^I(x^l)/D_l$ satisfies the regularity condition. Furthermore $\mV^I(x^l)$ can be chosen as a Newton derivative of $\mF(u,h)$. The linear operator $\mV^I(x^l)$ and the Schur complement $\mV^I(x^l)/H$ satisfy the regularity condition for any fixed $\sigma_k$ and $\lambda^k$, i.e.,  $\mV^I(x^l)$ and $\mV^I(x^l)/H$ are nonsingular and the corresponding inverse are uniformly bounded for any fixed $\sigma_k$ and $\lambda^k$. 
\end{lemma}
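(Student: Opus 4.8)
The plan is to reduce the inequality \eqref{eq:positive:pd:iso} to a pointwise estimate and then invoke the positive definiteness of $H$. Abbreviate $q := \lambda^k + \sigma_k\nabla u^l$ and recall $C_l = -\sigma_k\nabla + B_l$, where $D_l$, $B_l$ and $\chi_{\lambda^k,u^l}$ all act pointwise on $Y$. Using the adjoint relation \eqref{eq:gradient:div:adjoint}, I would first rewrite $\langle -\nabla^* D_l^{-1}C_l u, u\rangle_X = -\langle D_l^{-1}C_l u, \nabla u\rangle_Y$, which decouples over the pixels. At a pixel, writing $g := (\nabla u)_{i,j}$, $q := q_{i,j}$, $h := h^l_{i,j}$, and noting that with the choice $s=1$ one has $\chi := \chi_{\lambda^k,u^l}\in\{0,1\}$ and $D_l = \max(1,|q|/\alpha)\geq 1$, the contribution equals
\[
\frac{\sigma_k}{D_l}\Bigl[\,|g|^2 - \frac{\chi}{\alpha}\frac{\langle q,g\rangle\,\langle h,g\rangle}{|q|}\,\Bigr].
\]

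The crux is to show the bracket is nonnegative, and this is the \emph{main obstacle} in the sense that it is the only place where the geometry of the feasibility constraint enters. When $\chi=0$ the bracket is just $|g|^2\geq 0$. When $\chi=1$, two applications of Cauchy--Schwarz give $|\langle q,g\rangle|\leq |q|\,|g|$ and $|\langle h,g\rangle|\leq |h|\,|g|$; combined with the hypothesis $|h^l|\leq\alpha$, i.e.\ $|h|\leq\alpha$, the subtracted term is bounded in modulus by $\frac{1}{\alpha}\frac{|q|\,|g|\cdot\alpha\,|g|}{|q|}=|g|^2$, so the bracket is again $\geq 0$. Summing the nonnegative pointwise contributions yields $\langle -\nabla^* D_l^{-1}C_l u, u\rangle_X\geq 0$, and adding $\langle Hu,u\rangle_X$ gives $\langle(\mV^I(x^l)/D_l)u,u\rangle_X\geq\|u\|_H^2$, which is \eqref{eq:positive:pd:iso}.

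Next I would turn this coercivity estimate into the regularity condition. Since $H$ is positive definite, $\|u\|_H^2\geq\lambda_{\min}(H)\|u\|_2^2$ with $\lambda_{\min}(H)>0$ independent of $l,u^l,h^l$; hence the (nonsymmetric) matrix $\mV^I(x^l)/D_l$ satisfies $\langle(\mV^I(x^l)/D_l)u,u\rangle_X\geq\lambda_{\min}(H)\|u\|_2^2$, which forces injectivity, hence invertibility in finite dimensions, together with the uniform bound $\|(\mV^I(x^l)/D_l)^{-1}\|\leq 1/\lambda_{\min}(H)$. That $\mV^I(x^l)$ is an admissible Newton derivative of $\mF(u,h)$ is already supplied by Lemma \ref{lem:semismooth:max} and Lemma \ref{lem:vector:semismooth:newton}.

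Finally I would obtain nonsingularity and uniform bounds for $\mV^I(x^l)$ and $\mV^I(x^l)/H$ through the two block factorizations. The diagonal operator $D_l\geq 1$ is invertible with $\|D_l^{-1}\|\leq 1$, so the factorization of $\mV^I(x^l)$ through the Schur complement $\mV^I(x^l)/D_l$ shows $\mV^I(x^l)$ is nonsingular; the identity $\det\mV^I(x^l)=\det H\cdot\det(\mV^I(x^l)/H)$ then forces $\mV^I(x^l)/H$ to be nonsingular as well. For the uniform bounds, the only $l$-dependent block is $C_l=-\sigma_k\nabla+B_l$, and $|h^l|\leq\alpha$ with Cauchy--Schwarz gives $\|B_l u\|_Y\leq\sigma_k\|\nabla u\|_Y$, so $\|C_l\|\leq 2\sigma_k\|\nabla\|$ is bounded for fixed $\sigma_k$. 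Substituting the uniform bounds on $\|D_l^{-1}\|$, $\|(\mV^I(x^l)/D_l)^{-1}\|$, $\|H^{-1}\|$, $\|\nabla\|$ and $\|C_l\|$ into the explicit Schur inverse formula bounds $\|(\mV^I(x^l))^{-1}\|$ uniformly; since $(\mV^I(x^l)/H)^{-1}$ is exactly the $(2,2)$ block of $(\mV^I(x^l))^{-1}$, it inherits the same uniform bound, giving the regularity condition for both $\mV^I(x^l)$ and $\mV^I(x^l)/H$ for fixed $\sigma_k$ and $\lambda^k$.
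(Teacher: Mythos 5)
Your proposal is correct and follows essentially the same route as the paper: the same reduction via the adjoint relation to a pointwise/active-set estimate, the same double Cauchy--Schwarz bound using $|h^l|\leq\alpha$ to dominate the $B_l$ term by $\sigma_k D_l^{-1}|\nabla u|^2$, and the same passage to the regularity condition through coercivity of $H$ and the block (Schur-complement) inversion formulas, where your observation that $(\mV^I(x^l)/H)^{-1}$ is the $(2,2)$ block of $(\mV^I(x^l))^{-1}$ is exactly the Duncan/Woodbury identity the paper invokes.
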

\begin{proof}
	We first prove the regular condition of $\mV^I(x^l)/D_l$, whose proof is essentially similar to the proof of Lemma 3.3 in \cite{HS}. 
	Denote 
	\[
	\Omega^{+}: = \{ (x_1,x_2) \in \Omega: |\lambda^k + \sigma_k\nabla u | \geq \alpha\}, \quad 	\Omega^{-}: = \{ (x_1,x_2) \in \Omega: |\lambda^k + \sigma_k\nabla u | < \alpha\}, \quad \Omega = \Omega^{+} \cup \Omega^{-}.
	\]
	Since for any $u\in X$, we have
	\begin{align}
	&\langle -\nabla^*D_l^{-1}C_lu, u  \rangle_{X} =-\langle D_l^{-1}C_lu, \nabla u \rangle_{Y}
	=\langle -D_l^{-1}(-\sigma_k \nabla + B_l)u, \nabla u\rangle_{Y} \notag \\
	&=\langle \sigma_k D_l^{-1}\nabla u, \nabla u \rangle_{2} - \langle D_l^{-1}B_lu, \nabla u \rangle_{Y} \label{eq:inequ:iso:posi:esi}\\
	&=\int_{\Omega^{-}}\sigma_kD_l^{-1}|\nabla u|^2 dx +  \int_{\Omega^{+}}\sigma_kD_l^{-1}|\nabla u|^2 dx-\int_{\Omega^{+}} \langle D_l^{-1} \dfrac{\sigma_k}{\alpha}\dfrac{\langle \lambda^k + \sigma_k \nabla u^l,  \nabla u \rangle }{ |\lambda^k + \sigma_k \nabla u^l |}h^k,\nabla u \rangle dx. \notag
	\end{align}
	With the assumption $|h^l| \leq \alpha$ and direct calculations, we see
	\begin{equation}\label{eq:inequ:iso:posi:esiII}
	|\langle D_l^{-1} \dfrac{\sigma_k}{\alpha}\dfrac{\langle \lambda^k + \sigma_k \nabla u^l,  \nabla u \rangle }{ |\lambda^k + \sigma_k \nabla u^l |}h^l,\nabla u \rangle |
	\leq D_l^{-1} \dfrac{\sigma_k}{\alpha}\dfrac{|\lambda^k + \sigma_k \nabla u^l|| \nabla u|  }{ |\lambda^k + \sigma_k \nabla u^l|}|h^l||\nabla u|
	\leq  \sigma_k D_l^{-1} |\nabla u|^2.
	\end{equation}
	Combining \eqref{eq:inequ:iso:posi:esi} and \eqref{eq:inequ:iso:posi:esiII}, we arrive at
	\[
	\langle -\nabla^*D_l^{-1}C_lu, u  \rangle_{X} \geq \int_{\Omega^{-}}\sigma_kD_l^{-1}|\nabla u|^2 dx \geq 0,
	\]
	which leads to \eqref{eq:positive:pd:iso}.  We thus conclude that  $\mV^I(x^l)/D_l \geq H$ and $(\mV^I(x^l)/D_l)^{-1} $ is bounded by the positive definiteness of $H$. For the regularity condition of ${\mV^I}(x^l)$, it is known that (\cite{Zhangfu} formula 0.8.1 which is similar to the Banachiewicz inversion formula)
	\[
	{\mV^I}(x^l)^{-1} = \begin{bmatrix}
	(\mV^I(x^l)/D_l)^{-1} & -(\mV^I(x^l)/D_l)^{-1}\nabla^*D_l^{-1} \\
	-D_l^{-1}C_l 	(\mV^I(x^l)/D_l)^{-1} & D_l^{-1} + D_l^{-1}C_l	(\mV^I(x^l)/D_l)^{-1}\nabla^*D_l^{-1}.
	\end{bmatrix}
	\]
	By the boundedness of $(\mV^I(x^l)/D_l)^{-1}$, $C_l$ and $D_l^{-1}$, we get the boundedness of ${\mV^I}(x^l)^{-1}$. The boundedness of  $D_l^{-1}$ is because $D_l \geq I$ and the boundedness of $C_l$ comes from the boudnedness of $\nabla$ and $B_l$.
	
	Similarly, for the existence and boundedness of $(\mV^I(x^l)/I)^{-1}$,   by the Duncan inversion formula (see 0.8.1 and 0.8.2 of \cite{Zhangfu}) or Woodbury formula, we have
	\[
	(\mV^I(x^l)/H)^{-1} = (D_l -C_lH^{-1}\nabla^*)^{-1} = D_l^{-1} + D_l^{-1}C_l	(\mV^I(x^l)/D_l)^{-1}\nabla^*D_l^{-1}.
	\]
	We thus get the boundedness of $(\mV^I(x^l)/H)^{-1}$.
\end{proof}

\subsection{The Anisotropic Total Variation}
%For any vector $p=(p_1, p_2)^{T} \in \mathbb{R}^2$, the definition of the anisotropic $\|\cdot\|_{1}$ norm is 
%\begin{equation}\label{eq:l1:anis:pd}
%|p| :=  |p_1| +  |p_2|.
%\end{equation}
%With $\nabla u = (\nabla_1 u, \nabla_1 u)^{T}$, the anisotropic TV is as follows,
%\begin{equation}\label{eq:alm:up:ani:pd}
%\|\nabla u\|_{1}: = \|\nabla_1 u\|_{1} + \|\nabla_2 u\|_{1}=\int_{\Omega}|\nabla_1 u|d\sigma +\int_{\Omega}|\nabla_2 u|d\sigma .
%\end{equation}
For the anisotropic $l_1$ norm in \eqref{eq:l1:iso:pd}, with \eqref{eq:projection:ani:iso}, the projection \eqref{eq:proj:cons} becomes 
\begin{equation}\label{eq:proj:cons:ani}
%&\mathcal{P}_{\alpha}^{A}(y) =\left(\frac{y_1}{\max(1, {|y_1|}/{\alpha})},\frac{y_2}{\max(1, {|y_2|}/{\alpha})}\right)^{T}, \\
\mathcal{P}_{\alpha}^{A}(\lambda^k + \sigma_k \nabla u) = \left(\dfrac{\lambda_1^k + \sigma_k \nabla_1 u}{\max(1.0, {|\lambda_1^k + \sigma_k \nabla_1 u|}/{\alpha})}, \dfrac{\lambda_2^k + \sigma_k \nabla_2 u}{\max(1.0, {|\lambda_2^k + \sigma_k \nabla_2 u|}/{\alpha})}\right)^T.
\end{equation}
With similar analysis as the isotropic case, the equation \eqref{eq:opti:u:lambda} becomes
\begin{equation}\label{eq:opti:u:lambda:ani}
\mF^A(u,h) = \begin{bmatrix} 0 \\ 0 \\0 \end{bmatrix}, \quad \mF^A(u,h) := 
\begin{bmatrix}
Hu - f + \nabla^*h \\
- \sigma_k \nabla_1 u -  \lambda_1^k + \max \big(1.0, \dfrac{|\lambda_1^k + \sigma_k \nabla_1 u|}{\alpha} \big) h_1 \\
- \sigma_k \nabla_2 u -  \lambda_2^k + \max \big(1.0, \dfrac{|\lambda_2^k + \sigma_k \nabla_2 u|}{\alpha} \big) h_2
\end{bmatrix}
.
\end{equation}
Similar to Lemma \ref{lem:semismooth:max}, we have the following lemma, whose proof is completely similar to Lemma \ref{lem:semismooth:max} and we omit here.
\begin{lemma}\label{lem:ani:pdd}
	The functions $\max (1.0, {|\lambda_1^k + \sigma_k \nabla_1 u|/\alpha})$ and $\max (1.0, {|\lambda_2^k + \sigma_k \nabla_2 u|}/{\alpha})$ are semismooth functions of $u$ and their Clarke generalized gradients are as follows,
	\begin{equation}\label{eq:subdifffer:ani:pd:max}
	\left\{\chi_{\lambda^k, u}^{i,s}\dfrac{\sigma_k}{\alpha}\dfrac{\langle \lambda_i^k + \sigma_k \nabla_i u,  \nabla_i \cdot \ \rangle }{ |\lambda_i^k + \sigma_k \nabla_i u |} \ | \ s\in [0,1] \right \} = \partial_{u}(\max (1.0, \dfrac{|\lambda_i^k + \sigma_k \nabla_i u|}{\alpha})), \quad i=1,2
%\left\{\chi_{\lambda^k, u}^{2,s}\dfrac{\sigma_k}{\alpha}\dfrac{\langle \lambda_2^k + \sigma_k \nabla_2 u,  \nabla_2 \cdot \  \rangle }{ |\lambda_2^k + \sigma_k \nabla_2 u |} \ | \ s\in [0,1] \right\} =	\partial_{u}(\max (1.0, \dfrac{|\lambda_2^k + \sigma_k \nabla_2 u|}{\alpha})) , 
		\end{equation}
	where $\chi_{\lambda^k, u}^{1,s}$ and $\chi_{\lambda^k, u}^{2,s}$ are the generalized derivatives of $\max(\cdot, 1.0)$,
	\begin{equation}\label{eq:ani:pd:actset}
	\chi_{\lambda^k, u}^{1,s} = \begin{cases}
	1, \quad & |\lambda_1^k +\sigma_k \nabla_1 u | / \alpha > 1.0, \\
s, \quad & |\lambda_1^k +\sigma_k \nabla_1 u | / \alpha = 1.0, \\
	0, \quad & |\lambda_1^k  + \sigma_k \nabla_1 u | / \alpha < 1.0,
	\end{cases} \quad 
		\chi_{\lambda^k, u}^{2,s} = \begin{cases}
	1, \quad & |\lambda_2^k +\sigma_k \nabla_2 u | / \alpha \geq 1.0, \\
   s, \quad & |\lambda_2^k +\sigma_k \nabla_2 u | / \alpha = 1.0, \ s \in [0,1],\\
	0, \quad & |\lambda_2^k  + \sigma_k \nabla_2 u | / \alpha < 1.0.
	\end{cases}
	\end{equation}
	Furthermore $F^A(u,h)$ is semismooth on $(u,h)$. Henceforth, denote $\chi_{\lambda^k, u}^{1}:=\chi_{\lambda^k, u}^{1,1}$ and $\chi_{\lambda^k, u}^{2}: = \chi_{\lambda^k, u}^{2,1} $ for $s=1$ cases in \eqref{eq:ani:pd:actset}.
\end{lemma}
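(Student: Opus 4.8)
The plan is to mirror the proof of Lemma \ref{lem:semismooth:max} and exploit that the anisotropic projection \eqref{eq:proj:cons:ani} decouples the two coordinate directions: each of the two scalar functions $\mG_i(u) := \max(1.0, |\lambda_i^k + \sigma_k \nabla_i u|/\alpha)$, $i=1,2$, can be analyzed independently and in exactly the same way as the single function treated in the isotropic case, after which the semismoothness of $\mF^A(u,h)$ follows from a componentwise argument.

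First, for each fixed $i \in \{1,2\}$ I would introduce the two \emph{selection functions} $\mG_{i,1}(u) := 1.0$ and $\mG_{i,2}(u) := |\lambda_i^k + \sigma_k \nabla_i u|/\alpha$, so that $\mG_i$ is their continuous selection in the sense of \cite{Sch} (Chapter 4) or \cite{FP} (Definition 4.5.1). Here $\lambda_i^k + \sigma_k \nabla_i u$ is real-valued at each pixel, so $|\cdot|$ is the ordinary absolute value. The function $\mG_{i,1}$ is smooth, while $\mG_{i,2}$ is smooth on every open set avoiding the closed set $D_0^i := \{u : \lambda_i^k + \sigma_k \nabla_i u = 0\}$ where its argument vanishes.

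Second, I would verify that on the switching set $D_\alpha^i := \{u : |\lambda_i^k + \sigma_k \nabla_i u| = \alpha\}$ both selection functions are smooth in a neighborhood: since $\alpha > 0$, on $D_\alpha^i$ the argument $\lambda_i^k + \sigma_k \nabla_i u$ is bounded away from $0$, so $D_\alpha^i$ is disjoint from $D_0^i$ and $\mG_{i,2}$ is $C^\infty$ near $D_\alpha^i$. This shows $\mG_i$ is a $PC^\infty$, hence $PC^1$, function, and therefore semismooth by \cite{MU} (Proposition 2.26). A chain-rule computation gives $\nabla_u \mG_{i,2}(u) = \frac{\sigma_k}{\alpha}\frac{\langle \lambda_i^k + \sigma_k \nabla_i u, \nabla_i \cdot \ \rangle}{|\lambda_i^k + \sigma_k \nabla_i u|}$ and $\nabla_u \mG_{i,1}(u) = 0$; then, by \cite{Sch} (Proposition 4.3.1), on $D_\alpha^i$ the Clarke generalized gradient equals the convex hull $\text{co}\{\nabla_u \mG_{i,1}(u), \nabla_u \mG_{i,2}(u)\}$, which yields exactly the parametrization \eqref{eq:subdifffer:ani:pd:max} with $\chi^{i,s}_{\lambda^k,u} \in [0,1]$ on the switching set and $\chi^{i,s}_{\lambda^k,u} \in \{0,1\}$ off it.

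Finally, to conclude that $\mF^A(u,h)$ in \eqref{eq:opti:u:lambda:ani} is semismooth on $(u,h)$, I would invoke the componentwise principle (\cite{MU} Proposition 2.10, together with Lemma \ref{lem:vector:semismooth:newton}): the first block $Hu - f + \nabla^*h$ is affine, hence smooth, and each of the remaining two blocks is affine in $h$ with the only nonsmoothness entering through the scalar factor $\mG_i(u)$ just shown to be semismooth, so each block is semismooth in $(u,h)$ and therefore so is the whole vector-valued map. Since the argument for $i=2$ is identical to that for $i=1$, and both coincide with the isotropic argument of Lemma \ref{lem:semismooth:max}, there is essentially no new obstacle; the only point requiring genuine care is confirming that the switching set $D_\alpha^i$ stays away from the singular set $D_0^i$ of the absolute value, which is precisely what $\alpha > 0$ guarantees and what makes the $PC^\infty$ structure, and hence the semismoothness, go through.
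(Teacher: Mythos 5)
Your proposal is correct and follows essentially the same route as the paper, which omits the proof of this lemma precisely because it is the componentwise scalar analogue of the $PC^\infty$/continuous-selection argument given for Lemma \ref{lem:semismooth:max}. Your observation that $\alpha>0$ keeps the switching set $D_\alpha^i$ away from the singular set $D_0^i$ of the absolute value is the right point of care, and the componentwise conclusion for $\mF^A$ via Lemma \ref{lem:vector:semismooth:newton} matches the paper's argument.
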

%\begin{proof}
%	For anisotrpic case and any real valued function $u$, $\max(1.0, |u|/\alpha)$ is piecewise differentiable where $|\cdot|$ is the absolute value function here. Since $\nabla_1$ and $\nabla_2$ are linear, bounded and real-valued mapping in finite dimensional spaces, the compositions functions $\max (1.0, {|\lambda_1^k + \sigma_k \nabla_1 u|}/{\alpha})$ and $\max (1.0, {|\lambda_2^k + \sigma_k \nabla_2 u|}/{\alpha})$ are also piecewise differentiable. They thus are both semismooth functions of $u$. 	 The generalized gradients \eqref{eq:subdifffer:ani:pd:max} and the semismoothness of $F^A(u,h)$  on $(u,h)$ follows similarly as in Lemma \ref{lem:semismooth:max}.
%\end{proof}
Let's introduce
\[
D_l^{A} = \begin{bmatrix}
U_{\sigma_k}^1 & 0\\
0 & U_{\sigma_k}^2
\end{bmatrix},
\quad 
B_l^{A} = \begin{bmatrix}
\chi_{\lambda^k, u^k}^1\dfrac{\sigma_k}{\alpha}\dfrac{\langle \lambda_1^k + \sigma_k \nabla_1 u^l,  \nabla_1  \cdot \  \rangle } { |\lambda_1^k + \sigma_k \nabla_1  u^l |}h_1^l  \\
\chi_{\lambda^k, u^l}^2\dfrac{\sigma_k}{\alpha}\dfrac{\langle \lambda_2^k + \sigma_k \nabla_2 u^l,  \nabla_2  \cdot \ \rangle } { |\lambda_2^k + \sigma_k \nabla_1 u^l |}h_2^l
\end{bmatrix}, \ \ C_l^{A} = -\sigma \nabla + B_l^A,
\]
where 
\[
U_{\sigma_k}^1( \lambda^k, u^l)=\max \left(1.0, \dfrac{|\lambda_1^k + \sigma_k \nabla_1 u^l|}{\alpha}\right),\quad 
U_{\sigma_k}^2( \lambda^k, u^l)=\max \left(1.0, \dfrac{|\lambda_2^k + \sigma_k \nabla_2 u^l|}{\alpha}\right).
\]
By Lemma \ref{lem:vector:semismooth:newton} and \ref{lem:ani:pdd},  since $F^A(u,h)$ is an affine function of $h$, together with Lemma \ref{lem:ani:pdd}, it can be readily verified that, we can choose the Newton derivative of the nonlinear equation \eqref{eq:opti:u:lambda:ani} as
\begin{equation}
%&\mV^A(x^k) = \begin{bmatrix}
%I & \nabla_x^* &\nabla_y^*\\
%-\sigma_k \nabla_x + \chi_{\lambda^k, u^k}^x\dfrac{\sigma_k}{\alpha}\dfrac{\langle \lambda_1^k + \sigma_k \nabla_x u^k,  \nabla_x \cdot \rangle }{ |\lambda_1^k + \sigma_k \nabla_x u^k |}h_1^k & \max \left(1.0, \dfrac{|\lambda_1^k + \sigma_k \nabla_x u^k|}{\alpha}\right) & 0\\
% -\sigma_k \nabla_y + \chi_{\lambda^k, u^k}^y\dfrac{\sigma_k}{\alpha}\dfrac{\langle \lambda_2^k + \sigma_k \nabla_y u^k,  \nabla_y \cdot \rangle }{ |\lambda_2^k + \sigma_k \nabla_y u^k |}h_2^k & 0&\max \left(1.0, \dfrac{|\lambda_2^k + \sigma_k \nabla_y u^k|}{\alpha}\right) 
%\end{bmatrix}. \\
\mV^{A} = \begin{bmatrix}
H & \nabla ^* \\
C_l^{A} & D_l^{A}
\end{bmatrix}.
\end{equation}
The right-hand side becomes
\[
\mV^A(x^l)x^l - \mF^A(x^l) =\begin{bmatrix}
f \\ B_lf 
\end{bmatrix}
=
\begin{bmatrix}
f \\ \lambda_1^k + \chi_{\lambda^k, u^l}^1\dfrac{\sigma_k}{\alpha}\dfrac{\langle \lambda_1^k + \sigma_k \nabla_1 u^l,  \nabla_1 u^l \rangle }{ \|\lambda_1^k + \sigma_k \nabla_1 u^l \|}h_1^l 
 \\ \lambda_2^k + \chi_{\lambda^k, u^l}^2\dfrac{\sigma_k}{\alpha}\dfrac{\langle \lambda_2^k + \sigma_k \nabla_2 u^l,  \nabla_2 u^l \rangle }{ \|\lambda_2^k + \sigma_k \nabla_2 u^l \|}h_1^l 
\end{bmatrix}: = \begin{bmatrix} f \\ b_1^l \\b_2^l \end{bmatrix}.
\]
For solving $u^{k+1}$ first, the Newton update becomes
%\begin{align}\label{eq:calculate:u:first:pd:ani}
\begin{equation}\label{eq:newton:equation:ani}
(H-\nabla^*{{D^A_l}}^{-1}{C^{A}_l})u^{l+1} =  f + \Div {D^{A}_l}^{-1}b^l,
\end{equation}
%u^{k+1} &-\Div_x \dfrac{\sigma_k\nabla_x u^{k+1} - \chi_{\lambda^k, u^k}^x\dfrac{\sigma_k}{\alpha}\dfrac{\langle \lambda_1^k + \sigma_k \nabla_x u^k,  \nabla_x u^{k+1} \rangle } { |\lambda_x^k + \sigma_k \nabla_1 u^k |}h_1^k}{U_{\sigma_k}^x( \lambda^k, u^k)}\\
%&-\Div_y \dfrac{\sigma_k\nabla_y u^{k+1} - \chi_{\lambda^k, u^k}^y\dfrac{\sigma_k}{\alpha}\dfrac{\langle \lambda_2^k + \sigma_k \nabla_y u^k,  \nabla_y u^{k+1} \rangle } { |\lambda_2^k + \sigma_k \nabla_y u^k |}h_2^k}{U_{\sigma_k}^y( \lambda^k, u^k)} \\
% &= f +\Div_x\dfrac{b_x^k}{U_{\sigma_k}^x( \lambda^k, u^k)} +\Div_x \dfrac{b_y^k}{U_{\sigma_k}^y( \lambda^k, u^k)},
%\end{align}
where $b^l = (b_1^l, b_2^l)^{T}$. Then $h^{l+1}$ can be recovered by 
\begin{equation}\label{eq:ufirst:h:ani}
h^{l+1} =  \begin{bmatrix}\left( b_1^l  + \sigma_k \nabla_1 u^{l+1} - \chi_{\lambda^k, u^l}^1\dfrac{\sigma_k}{\alpha}\dfrac{\langle \lambda_1^k + \sigma_k \nabla_1 u^l,  \nabla_1 u^{l+1} \rangle } { |\lambda_1^k + \sigma_k \nabla_1 u^l |}h_1^l \right) \bigg / U_{\sigma_k}^1( \lambda^k, u^l) \\
\left( b_2^l  + \sigma_k \nabla_2 u^{l+1} - \chi_{\lambda^k, u^l}^2\dfrac{\sigma_k}{\alpha}\dfrac{\langle \lambda_2^k + \sigma_k \nabla_2 u^l,  \nabla_2 u^{l+1} \rangle } { |\lambda_2^k + \sigma_k \nabla_2 u^l |}h_2^l \right) \bigg/ U_{\sigma_k}^2( \lambda^k, u^l)
\end{bmatrix}.
\end{equation}
For calculating $h^{+1}$ first with solving the Schur complement $\mV^A(x^l)/H$,   we have 
\begin{equation}\label{eq:pdssn:h:ani}
(D_l^A - C_l  H^{-1} \nabla^*)h^{l+1}=  b^l -C_{l}H^{-1}f.
\end{equation}
Then $u^{l+1}$ can be recovered through
\[
u^{l+1}=H^{-1} (f-\nabla^*h^{l+1}).
\]
For the positive definiteness and regularity condition of the Schur complement $\mV^A(x^l)/D_l^A$ , we have the following lemma, whose proof is completely similar to the proof of Lemma \ref{lem:positive:iso:pd} and we omit here.
\begin{lemma}\label{lem:positive:ani:pd} 
	If the feasibility of $h^l$ is satisfied, i.e., $|h_i^l|\leq \alpha$ by \eqref{eq:opti:modi:pro}, $i=1,2$ we have the positive definiteness of $(H-\nabla^*{D^A_l}^{-1}C^A_l)$,i.e.,
	\begin{equation}\label{eq:positive:pd:ani:iso}
	\langle -\nabla^*{D^A_l}^{-1}C^A_l u, u  \rangle_{X} \geq 0 \Rightarrow \langle (H-\nabla^*{D^A_l}^{-1}C^A_l)u,u \rangle_{X} \geq \|u\|_{H}^2.
	\end{equation}
	We thus conclude $\mV^A(x^l)$ can be chosen as a Newton derivative of $\mF^A(u,h)$.
	$\mV^A(x^l)/D_l^A$ satisfies the regularity condition. The linear operator $\mV^A(x^l)$ and the Schur complement $\mV^A(x^l)/H$ satisfy the regularity condition for any fixed $\sigma_k$ and $\lambda^k$. 	
\end{lemma}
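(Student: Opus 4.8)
The plan is to follow verbatim the structure of the proof of Lemma~\ref{lem:positive:iso:pd}, exploiting the fact that in the anisotropic setting the blocks $D_l^A$, $B_l^A$ and $C_l^A$ are completely decoupled across the two gradient directions. First I would rewrite the quadratic form using the adjoint relation \eqref{eq:gradient:div:adjoint}, namely $\langle -\nabla^* {D_l^A}^{-1}C_l^A u, u\rangle_X = -\langle {D_l^A}^{-1}C_l^A u, \nabla u\rangle_Y$. Since $D_l^A = \diag(U_{\sigma_k}^1, U_{\sigma_k}^2)$ is diagonal, $B_l^A$ has a purely componentwise action (its $i$-th row involves only $\nabla_i$ and $h_i^l$), and $\nabla u = (\nabla_1 u, \nabla_2 u)^T$, this inner product splits cleanly into a sum over $i=1,2$ of terms $\langle \sigma_k (U_{\sigma_k}^i)^{-1}\nabla_i u, \nabla_i u\rangle - \langle (U_{\sigma_k}^i)^{-1}(B_l^A u)_i, \nabla_i u\rangle$, where $(B_l^A u)_i$ denotes the $i$-th component; each such term has exactly the form treated in \eqref{eq:inequ:iso:posi:esi} with $\nabla$ replaced by $\nabla_i$.

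Next, for each $i \in \{1,2\}$ I would partition the grid into $\Omega_i^{+} = \{\,|\lambda_i^k + \sigma_k\nabla_i u|\geq \alpha\,\}$ and $\Omega_i^{-} = \{\,|\lambda_i^k + \sigma_k\nabla_i u| < \alpha\,\}$, mirroring the sets $\Omega^{+}, \Omega^{-}$ of the isotropic proof. On $\Omega_i^{-}$ one has $U_{\sigma_k}^i = 1$ and $\chi_{\lambda^k,u}^i = 0$, so the contribution reduces to the nonnegative term $\int_{\Omega_i^{-}}\sigma_k |\nabla_i u|^2\,dx$. On $\Omega_i^{+}$ I would invoke the componentwise feasibility $|h_i^l|\leq \alpha$ together with a Cauchy--Schwarz estimate to bound the mixed term $\langle (U_{\sigma_k}^i)^{-1}(B_l^A u)_i, \nabla_i u\rangle$ exactly as in \eqref{eq:inequ:iso:posi:esiII}; here the denominators $|\lambda_i^k + \sigma_k\nabla_i u|$ are scalar absolute values, so the estimate is if anything simpler, yielding $|\langle (U_{\sigma_k}^i)^{-1}(B_l^A u)_i, \nabla_i u\rangle|\leq \sigma_k (U_{\sigma_k}^i)^{-1}|\nabla_i u|^2$. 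Summing over $i$ then gives $\langle -\nabla^* {D_l^A}^{-1}C_l^A u, u\rangle_X \geq \sum_{i}\int_{\Omega_i^{-}}\sigma_k|\nabla_i u|^2\,dx \geq 0$, which is precisely \eqref{eq:positive:pd:ani:iso}; combined with the positive definiteness of $H$ this shows $\mV^A(x^l)/D_l^A \geq H$ and hence that $(\mV^A(x^l)/D_l^A)^{-1}$ is bounded.

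The claim that $\mV^A(x^l)$ is a legitimate Newton derivative of $\mF^A(u,h)$ is immediate from the semismoothness established in Lemma~\ref{lem:ani:pdd} together with the componentwise rule of Lemma~\ref{lem:vector:semismooth:newton}, so the substantive work is entirely in the regularity condition. To obtain the regularity of $\mV^A(x^l)$ and of the Schur complement $\mV^A(x^l)/H$ I would reuse the block-inversion identities from \cite{Zhangfu} (the Banachiewicz formula and the Woodbury/Duncan formula) exactly as in the isotropic case, now with the $2\times 1$ block $C_l^A$ and the $2\times 2$ diagonal block $D_l^A$. The uniform boundedness of ${\mV^A(x^l)}^{-1}$ and of $(\mV^A(x^l)/H)^{-1}$ then follows from the boundedness of $(\mV^A(x^l)/D_l^A)^{-1}$ together with $D_l^A \geq I$ (so ${(D_l^A)}^{-1}$ is bounded) and the boundedness of $C_l^A$, which is built from the bounded operators $\nabla_1,\nabla_2$ and $B_l^A$.

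I do not expect any genuine obstacle: because the anisotropic projection \eqref{eq:proj:cons:ani} acts separately on each channel, the whole argument decouples into two independent copies of the isotropic estimate. The only point requiring care is to confirm that feasibility is available \emph{componentwise}, i.e.\ $|h_1^l|\leq\alpha$ and $|h_2^l|\leq\alpha$ separately rather than only for the combined vector; this is guaranteed by \eqref{eq:opti:modi:pro} read through the anisotropic projection, and it is exactly what makes the per-direction Cauchy--Schwarz bound close.
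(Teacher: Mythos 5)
Your proposal is correct and follows exactly the route the paper intends: the paper omits this proof, stating it is ``completely similar'' to that of Lemma~\ref{lem:positive:iso:pd}, and your argument is precisely that adaptation --- decoupling the quadratic form into the two gradient directions, applying the componentwise Cauchy--Schwarz bound with $|h_i^l|\leq\alpha$ on each active set $\Omega_i^{+}$, and reusing the Banachiewicz/Duncan block-inversion formulas for the regularity of $\mV^A(x^l)$ and $\mV^A(x^l)/H$. No gaps.
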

We conclude the section \ref{sec:alm:pdssn} by the following algorithm \ref{alm:SSN_PDP} and \ref{alm:SSN_PDD} which are subproblems for the $k$th iteration of ALM applying to \eqref{eq:ROF}. 
\begin{algorithm}[h]
	\caption{Primal-dual semismooth Newton method with solving the primal variable first (SSNPDP) for  \eqref{eq:u:proj:solve} with auxiliary \eqref{eq:opti:u:lambda} or \eqref{eq:opti:u:lambda:ani}
		\label{alm:SSN_PDP}}
	\begin{algorithmic}
		\STATE {Given corrupted image $f$,  multiplier $\lambda^k$, step size $\sigma_k$ of ALM, $u^0$, auxiliary  variable $h^0$ in the feasible set  $\{h:\norm[\infty]{h} \leq \alpha\}$. \\
		Iterate the following steps for $l=0, 1, \cdots, $ unless some stopping criterion associated with the nonlinear system \eqref{eq:opti:u:lambda} is satisfied.}
		%\Statex {\textbf{Output}: Optimal parameters $\lambda_1$ and $\lambda_2$.}
	%	\STATE {\textbf{Initialise}: $p=0$}
	%	\WHILE{Unless some stopping criterion is satisfied}
	%	\STATE{Compute the right hand side of the linear system \eqref{eq:newton:equation:iso} for isotropic (or \eqref{} for anisotropic case):}
	%	\STATE{$\qquad b=(x^{2})^{d}f-\frac{1}{\epsilon_{0}}\big (f_{0}(p)+g_{0}(p) \big )-\frac{1}{\epsilon_{1}}\big(d^{\ast}f_{1}(x p)+x^{\ast}g_{1}(x p) \big)$}
		\STATE {\textbf{Step 1: }Solve the linear system \eqref{eq:calculate:u:first} for $u^{l+1}$ with some stopping criterion for the isotropic case (or \eqref{eq:newton:equation:ani} for the anisotropic case) with iterative method (BiCGSTAB):}
		%\STATE {$\qquad\left (\epsilon \Delta^{2}+(x^{2})^{\ast}x^{2}+\frac{1}{\epsilon_{0}} \big (Df_{0}(p)+ Dg_{0}(p)  \big ) +\frac{1}{\epsilon_{1}} \big ( D\hat{f}_{1}(p)+D\hat{g}_{1}(p\big) \right )d=b$}
		\STATE{\textbf{Step 2: }Update $h^{l+1}$ by \eqref{eq:ufirst:h}} (or \eqref{eq:ufirst:h:ani} for the anisotropic case)
		\STATE{\textbf{Step 3: }Project $h^{l+1}$ to the feasible set  $\{h:\norm[\infty]{h} \leq \alpha\}$, i.e., $h^{l+1}  =\mathcal{P}_{\alpha}(h^{l+1})$. Set $(u^{l+1},h^{l+1})$ as the initial value for the next Newton iteration and go to \textbf{Step 1}}.
%		\IF{$\|F(p)\|_{D}\le \mathrm{tol}$}
%		\STATE {$\qquad$ Decrease $\epsilon_{0}$ and $\epsilon_{1}$ using an appropriate strategy}
%		\ENDIF      
	%	\ENDWHILE
		%\STATE{Recover denoised image $u$ as}
		\STATE{Output $u^{k+1}$, $h^{k+1}$}
	\end{algorithmic}
\end{algorithm} 

\begin{algorithm}[h]
	\caption{Primal-dual semismooth Newton method with solving the dual variable first (SSNPDD) for  \eqref{eq:u:proj:solve} with auxiliary \eqref{eq:opti:u:lambda} or \eqref{eq:opti:u:lambda:ani}
		\label{alm:SSN_PDD}}
	\begin{algorithmic}
		\STATE {Given corrupted image $f$,  multiplier $\lambda^k$, step size $\sigma_k$ of ALM, $u^0$, auxiliar  variable $h^0$ in the feasible set  $\{h:\norm[\infty]{h} \leq \alpha\}$. \\
			Iterate the following steps for $l=0, 1, \cdots, $ unless some stopping criterion associated with the nonlinear system \eqref{eq:opti:u:lambda} is satisfied.}
		%\Statex {\textbf{Output}: Optimal parameters $\lambda_1$ and $\lambda_2$.}
		%	\STATE {\textbf{Initialise}: $p=0$}
		%	\WHILE{Unless some stopping criterion is satisfied}
		%	\STATE{Compute the right hand side of the linear system \eqref{eq:newton:equation:iso} for isotropic (or \eqref{} for anisotropic case):}
		%	\STATE{$\qquad b=(x^{2})^{d}f-\frac{1}{\epsilon_{0}}\big (f_{0}(p)+g_{0}(p) \big )-\frac{1}{\epsilon_{1}}\big(d^{\ast}f_{1}(x p)+x^{\ast}g_{1}(x p) \big)$}
		\STATE {\textbf{Step 1: }Unless some stopping criterion is satisfied, solve the linear system \eqref{eq:ssn:pdd:h} for $h^{l+1}$ for the isotropic case (or \eqref{eq:pdssn:h:ani} for the anisotropic case) with iterative method (BiCGSTAB):}
		%\STATE {$\qquad\left (\epsilon \Delta^{2}+(x^{2})^{\ast}x^{2}+\frac{1}{\epsilon_{0}} \big (Df_{0}(p)+ Dg_{0}(p)  \big ) +\frac{1}{\epsilon_{1}} \big ( D\hat{f}_{1}(p)+D\hat{g}_{1}(p\big) \right )d=b$}
		\STATE{\textbf{Step 2: }Update $u^{l+1} = H^{-1}(f + \Div h^{l+1})$}
		\STATE{\textbf{Step 3: }Project $h^{l+1}$ to the feasible set  $\{h:\norm[\infty]{h} \leq \alpha\}$, i.e., $h^{l+1}  =\mathcal{P}_{\alpha}(h^{l+1})$. Set $(u^{l+1},h^{l+1})$ as the initial value for the next Newton iteration and go to \textbf{Step 1}}.
		\STATE{Output $u^{k+1}$, $h^{k+1}$}
	\end{algorithmic}
\end{algorithm} 
\begin{remark}
	The projection to the feasible set $\{h:\norm[\infty]{h} \leq \alpha\}$ is very important for the positive definiteness of \eqref{eq:newton:equation:iso} or \eqref{eq:newton:equation:ani}. 
	It can bring out more efficiency for solving the linear systems \eqref{eq:newton:equation:iso} or \eqref{eq:newton:equation:ani} numerically as in our numerical tests (see also \cite{HS}). 
\end{remark}
\section{ALM with Semismooth Newton Involving Soft Thresholding Operators}\label{sec:alm:ssnp:thres}
\subsection{ALM with Semismooth Newton: the Isotropic Case}

For the isotropic TV, we can rewrite the equation \eqref{eq:u:alm:suntoh1} as follows
%with the modification of the $l_1$ norm
%\begin{equation}
%|p| = \sqrt{p_1^2 + p_2^2},
%\end{equation}
%the Augmented Lagrangian \eqref{eq:alm:up} and the optimality conditions \eqref{eq:opti:u} and \eqref{eq:opti:p} remain the same.
%By \eqref{eq:p:shreshold} and the Moreau's indentity \eqref{eq:moreau:indentity}, similar to \eqref{eq:moreau:sub}, we have
%\begin{equation}\label{eq:moreau:sub:p}
%p = S_{\frac{\alpha}{\sigma_k}}^{I}(\frac{\lambda^k}{\sigma_k}+\nabla u)= (I + \frac{1}{\sigma_k} \partial \alpha \|\cdot\|_{1})^{-1}(\frac{\lambda^k + \sigma_k \nabla u}{\sigma_k}),
%\end{equation}
%where $ S_{\frac{\alpha}{\sigma_k}}^{I}(\cdot)$ is the soft threshold operator for the isotropic $l_1$ norm,
%\begin{equation}\label{eq:softthreshold:iso:l1}
%S_{\frac{\alpha}{\sigma_k}}^{I}(t) = \frac{t}{|t|}\max(|t|-\frac{\alpha}{\sigma_k}, 0).
%\end{equation}
%Henceforth, denote $\hat \lambda^k =(\hat \lambda_1^k, \hat \lambda_2^k)= {\lambda^k}/{\sigma_k}$. Substituting $p$ into \eqref{eq:opti:u}, we have
\begin{equation}\label{eq:newton:iso:u:p}
F^I(u)=0, \quad F^I(u):=Hu-f + \nabla^* \lambda^k + \sigma_k \nabla^*\nabla u - \sigma_k \nabla^* S_{\frac{\alpha}{\sigma_k}}^{I}(\hat \lambda^k+\nabla u). 
\end{equation}
We now turn to the semismooth Newton method to solve \eqref{eq:newton:iso:u:p}. The main problem comes from the soft thresholding operator $ S_{\frac{\alpha}{\sigma_k}}^{I}(\cdot)$. Still, let's introduce  the active sets
\[
\chi_{u, \hat \lambda^k}^{+} = \begin{cases}
1, \ \  \ |\hat \lambda^k+ \nabla u | \geq {\alpha}/{\sigma_k}, \\
0, \ \ \  |\hat \lambda^k+ \nabla u | < {\alpha}/{\sigma_k}.
\end{cases} 
%\quad
%\chi_{u, \hat \lambda^k}^{-} = \begin{cases}
%1, \ \  \ |\hat \lambda^k+ \nabla u | < \frac{\alpha}{\sigma_k}, \\
%0, \ \ \  |\hat \lambda^k+ \nabla u | \geq \frac{\alpha}{\sigma_k}.
%\end{cases}
\]

With the Moreau's indentity \eqref{eq:moreau:sub}, with direct calculations, we have
\begin{equation}\label{eq:proj:thres:rela}
S_{\frac{\alpha}{\sigma_k}}^{I}(\hat \lambda^k+  \nabla u ) = \hat \lambda^k +  \nabla u- \mathcal{P}_{\frac{\alpha}{\sigma_k}}( \hat \lambda^k +  \nabla u).
\end{equation}\label{eq:proj:thres:rela:gradient}
By \cite{CL} (Corollary 2 in section 2.3.3), we have
\begin{equation}
\partial_u(S_{\frac{\alpha}{\sigma_k}}^{I}(\hat \lambda^k+ \nabla u ) ) = \nabla - \partial_u( \mathcal{P}_{\frac{\alpha}{\sigma_k}}( \hat \lambda^k +  \nabla u)).
\end{equation}
%	By the Moreau identity \eqref{eq:moreau:sub:p}, we see
%\begin{equation}\label{eq:right:proj:semismooth}
%\mathcal{P}_{\alpha}(\lambda^k + \sigma_k \nabla u ) =\lambda^k + \sigma_k \nabla u  - 	\sigma_k(I + \frac{1}{\sigma_k} \partial \alpha \|\cdot\|_{1})^{-1}(\frac{\lambda^k + \sigma_k \nabla u}{\sigma_k}),
%\end{equation}
%where the righthand side of \eqref{eq:right:proj:semismooth} is a semismooth function of $u$. 
Denote $P(u) = \mathcal{P}_{\frac{\alpha}{\sigma_k}}( \hat \lambda^k +  \nabla u)$. 
Let's introduce  the active sets
\begin{equation}\label{eq:act:set:iso:kun:shreshold}
\chi_{u, \hat \lambda^k}^{+} = \begin{cases}
1, \ \  \ |\hat \lambda^k+ \nabla u | \geq {\alpha}/{\sigma_k}, \\
0, \ \ \  |\hat \lambda^k+ \nabla u | < {\alpha}/{\sigma_k},
\end{cases} \quad
\chi_{u, \hat \lambda^k}^{-} = \begin{cases}
1, \ \  \ |\hat \lambda^k+ \nabla u | < {\alpha}/{\sigma_k}, \\
0, \ \ \  |\hat \lambda^k+ \nabla u | \geq {\alpha}/{\sigma_k}.
\end{cases}
\end{equation}

Actually, we have the following lemma. 
\begin{lemma}\label{eq:proj:semismooth}
	$ P(u)=\mathcal{P}_{\frac{\alpha}{\sigma_k}}( \hat \lambda^k +  \nabla u)$  is a semismooth function of $u$. Furthermore, we have
	\begin{equation}\label{eq:inclusion:iso}
\left\{	A^{P,s}_u (\nabla \cdot \ )  \ | \ s \in [0,1] \right\} \subset 	(\partial_{u}  \mathcal{P}_{\frac{\alpha}{\sigma_k}}( \hat \lambda^k +  \nabla u)).
	\end{equation}
	It means that for any $v$ and  $s \in [0,1]$, we have
	\begin{equation} \label{eq:first:minus}
	A^{P,s}_u(\nabla \cdot \ ) = \begin{cases}
	A_{u}^{-}(\nabla \cdot \ ) :=    \nabla \cdot \ , &|\hat \lambda^k+ \nabla u | <{\alpha}/{\sigma_k}, \\
	\dfrac{\alpha}{\sigma_k}\left( \dfrac{\nabla \cdot \ }{|\hat \lambda^k+ \nabla u |} - s\dfrac{\langle \hat \lambda^k + \nabla u, \cdot \  \rangle (\hat \lambda^k + \nabla u) }{|\hat \lambda^k+ \nabla u |^3} \right),  \  &|\hat \lambda^k+ \nabla u | ={\alpha}/{\sigma_k},\\
 A_{u}^{+}(\nabla \cdot \ )=\dfrac{\alpha}{\sigma_k}\left( \dfrac{\nabla \cdot \ }{|\hat \lambda^k+ \nabla u |} - \dfrac{\langle \hat \lambda^k + \nabla u, \nabla \cdot \  \rangle (\hat \lambda^k + \nabla u) }{|\hat \lambda^k+ \nabla u |^3} \right), &|\hat \lambda^k+ \nabla u | > {\alpha}/{\sigma_k}.
	\end{cases}
	\end{equation}
	Throughout this paper, we choose $s=1$ for the Newton derivatives in \eqref{eq:inclusion:iso} and \eqref{eq:first:minus}, i.e.,
	\begin{equation}\label{eq:iso:proj}
	 A_{u,I}^{P} := \chi_{u, \hat \lambda^k}^{+}   A_{u}^{+}  + \chi_{u, \hat \lambda^k}^{-}   A_{u}^{-}.
	\end{equation}
%the following subgradient for computations
%	\[
%	A_{u}(\nabla v) =  \chi_{u, \hat \lambda^k}^{+} A_{u}^{+}(\nabla v) +  \chi_{u, \hat \lambda^k}^{-} A_{u}^{-}(\nabla v),
%	\]
%	which means we always choose $s=1$ in \eqref{eq:first:minus}.
	%	The Newton derivative of \eqref{eq:newton:deri:st:iso} is 
	%	\begin{equation}
	%	I + \nabla ^* A_u \nabla,
	%	\end{equation}
	%	which is positive definite on $u$, i.e., 
	%	\begin{equation}
	%	\langle (I + \nabla ^* A_u \nabla)u, u\rangle_{2}  \geq \|u\|_{2}^2.
	%	\end{equation}
\end{lemma}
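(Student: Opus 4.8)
The plan is to follow the strategy of Lemma \ref{lem:semismooth:max}. Since $\mathcal{P}_{\alpha/\sigma_k}$ acts pixelwise, by Lemma \ref{lem:vector:semismooth:newton} (see also \cite{MU}, Proposition 2.10) both the semismoothness and the Newton/Clarke derivative of $P(u)$ can be assembled componentwise, so it suffices to analyze the single two-dimensional map $\mathcal{P}_\beta:\mathbb{R}^2\to\mathbb{R}^2$, $\mathcal{P}_\beta(w)=w/\max(1,|w|/\beta)$, with $\beta:=\alpha/\sigma_k>0$ and $w=\hat\lambda^k+\nabla u$ at a fixed pixel, and then compose with the affine inner map $u\mapsto w(u)=\hat\lambda^k+\nabla u$.

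First I would write $\mathcal{P}_\beta$ as a continuous selection of the two smooth selection functions $P_1(w)=w$ and $P_2(w)=\beta w/|w|$: one has $\mathcal{P}_\beta=P_1$ on $\{|w|<\beta\}$, $\mathcal{P}_\beta=P_2$ on $\{|w|>\beta\}$, and on the interface $D_\beta:=\{|w|=\beta\}$ the two agree, $P_2(w)=\beta w/\beta=w=P_1(w)$, so the selection is continuous. The only singularity of $P_2$ is at $w=0$, which lies strictly inside $\{|w|<\beta\}$ (because $\beta>0$), where $P_1$ is the active, smooth selection; hence in a neighborhood of every point of $D_\beta$ both $P_1$ and $P_2$ are $C^\infty$. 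Consequently $\mathcal{P}_\beta$ is a $PC^\infty$ (in particular $PC^1$) function and is semismooth by \cite{MU} (Proposition 2.26); the same then holds for $P(u)$ after composition with the smooth map $w(u)$.

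Next I would compute the selection Jacobians $DP_1(w)=I$ and $DP_2(w)=\beta\bigl(I/|w|-ww^{T}/|w|^3\bigr)$, and invoke \cite{Sch} (Proposition 4.3.1), exactly as in Lemma \ref{lem:semismooth:max}, to get $\partial\mathcal{P}_\beta(w)=\text{co}\{DP_1(w),DP_2(w)\}$ for $w\in D_\beta$, with generic element $(1-s)I+s\,DP_2(w)$, $s\in[0,1]$. Composing with the affine inner map $w(u)=\hat\lambda^k+\nabla u$ via the Clarke chain rule (\cite{CL}, Corollary 2 in Section 2.3.3, as already used above) produces the family $A^{P,s}_u(\nabla\,\cdot\,)=\bigl[(1-s)I+s\,DP_2(w)\bigr]\nabla\,\cdot$ inside $\partial_u P(u)$, which is precisely the inclusion \eqref{eq:inclusion:iso}. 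On $\{|w|<\beta\}$ and $\{|w|>\beta\}$ the derivative is single valued and equals $A_u^-(\nabla\,\cdot\,)=\nabla\,\cdot$ and $A_u^+(\nabla\,\cdot\,)=\beta\bigl(\nabla\,\cdot/|w|-\langle w,\nabla\,\cdot\rangle w/|w|^3\bigr)$, giving the first and third branches of \eqref{eq:first:minus}; on $D_\beta$ one expands $(1-s)A_u^-+s\,A_u^+$ and uses $\beta/|w|=1$ to recover the middle branch. Finally, choosing $s=1$ together with the notation $\chi_{u,\hat\lambda^k}^{\pm}$ of \eqref{eq:act:set:iso:kun:shreshold} yields the selected Newton derivative \eqref{eq:iso:proj}.

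The main obstacle is the Clarke chain-rule step: composing the pointwise subdifferential $\partial\mathcal{P}_\beta$ with $\nabla$ generically yields only an outer inclusion, so I must verify that the displayed convex-hull elements actually belong to $\partial_u P(u)$, which is why the statement is phrased as the inclusion \eqref{eq:inclusion:iso} rather than an equality. The remaining bookkeeping — checking that $w=0$ never meets the kink set $D_\beta$, and collapsing $\beta/|w|$ to $1$ on $D_\beta$ so that the convex combination of the two limiting Jacobians matches the middle branch of \eqref{eq:first:minus} — is routine but must be done carefully for the three-branch formula to be consistent.
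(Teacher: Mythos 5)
Your proposal is correct and follows essentially the same route as the paper's proof: both identify $P_1(w)=w$ and $P_2(w)=(\alpha/\sigma_k)\,w/|w|$ as the smooth selection functions of the $PC^\infty$ projection (noting the singularity $w=0$ lies strictly inside the inactive region), compute the two selection Jacobians, and invoke \cite{Sch} (Proposition 4.3.1) to obtain the convex hull on the kink set $\{|\hat\lambda^k+\nabla u|=\alpha/\sigma_k\}$, with the Clarke chain rule handling the composition with $\nabla$. Your explicit check that $(1-s)A_u^-+sA_u^+$ collapses to the middle branch via $\frac{\alpha}{\sigma_k}/|w|=1$ is exactly the consistency the paper's formula relies on.
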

\begin{proof}
	The semismoothness of $P(u)$ is as follows. It is known that  $L(y) = 	\mathcal{P}_{\frac{\alpha}{\sigma_k}}(y)$ is $PC^{\infty}$ function \cite{MU} (Example 5.16) (or see \cite{FP} Theorem 4.5.2 for more general projections
	 of $PC^1$ function). Since $y(u) =  \nabla u + \hat \lambda^k $ is differentiable and affine on $u$, $y(u)$ is also semismooth on $u$. We thus get the semismoothness of $P(u)=L(y(u))$ on $u$ \cite{MU} (Proposition 2.9).
	
	%	For the cases $|\hat \lambda^k+ \nabla u | \neq \frac{\alpha}{\sigma_k} $, we would calculate the G\^ateaux derivative of $\mathcal{P}_{\alpha}(\lambda^k+ \sigma_k \nabla u))$ on $v$, i.e., 
	%	\begin{equation}\label{eq:g:derivative}
	%	\lim_{t\rightarrow 0} \frac{\mathcal{P}_{\alpha}(\lambda^k+ \sigma_k \nabla (u+tv))-\mathcal{P}_{\alpha}(\lambda^k+ \sigma_k \nabla u)}{t}.
	%	\end{equation}
	When $|\hat \lambda^k+ \nabla u | < {\alpha}/{\sigma_k}$, 
	%for any fixed $v\neq 0$, by choosing small $t$ such that $|\lambda^k+ \sigma_k \nabla (u+tv)| < \frac{\alpha}{\sigma_k}$,
	\[
	P(u) = P_1(u) := \mathcal{P}_{\alpha}(\hat \lambda^k+  \nabla u ) = (\hat \lambda^k +  \nabla u).
	\]
Since	$P_1(u)$ is an affine and differentiable function on $u$, we have
	\[
	P_1(u) = \hat \lambda^k + \nabla u, \quad \nabla_u P_1(u) = \nabla.
	\] 
	The G\^ateaux derivative of $\mathcal{P}_{\frac{\alpha}{\sigma_k}}( \hat \lambda^k +  \nabla u)$ can be directly calculated as in \eqref{eq:first:minus}.
	We thus have the first case of \eqref{eq:first:minus}.
	
	While $|\hat \lambda^k+ \nabla u | > \frac{\alpha}{\sigma_k}$, $P(u)$ is a smooth function on $u$. Similarly, we have 
	\begin{align}
	P(u) &= P_2(u):= \mathcal{P}_{\frac{\alpha}{\sigma_k}}( \hat \lambda^k +  \nabla u) = \dfrac{\alpha}{\sigma_k}\dfrac{ (\hat \lambda^k+  \nabla u)}{ |\hat \lambda^k +  \nabla u|}, \  \ |\hat \lambda^k+ \nabla u | > {\alpha}/{\sigma_k},\\
	\nabla_uP_2(u) &= \dfrac{\alpha}{\sigma_k}\left[\dfrac{\nabla \cdot }{  |\hat \lambda^k +  \nabla u|} - \dfrac{ \langle  \hat \lambda^k  +  \nabla u, \nabla \cdot  \ \rangle (\hat \lambda^k+  \nabla u )}{|\hat \lambda^k +  \nabla u|^3}\right], \ \ |\hat \lambda^k+ \nabla u | > {\alpha}/{\sigma_k}.
	\end{align}
	Actually, it can be readily verified by the directional derivative as follows.
	\begin{align}
	&\partial_{u}\dfrac{ (\hat \lambda^k+  \nabla u)}{ |\hat \lambda^k +  \nabla u|}(v)
	= \lim_{t \rightarrow 0}\dfrac{1}{t}\left( \dfrac{\hat \lambda^k+  \nabla u + t\nabla v}{ |\hat \lambda^k +  \nabla u+t\nabla v| } - \dfrac{\hat \lambda^k +  \nabla u}{ |\hat \lambda^k +  \nabla u| }\right)\\
	&=\lim_{t \rightarrow 0}\dfrac{1}{t}\left( \dfrac{\hat \lambda^k+  \nabla u + t\nabla v}{ \sqrt{ |\hat \lambda^k +  \nabla u|^2 + 2t \langle  \hat \lambda^k +  \nabla u, \nabla v\rangle  +t^2 |\nabla v|^2} } - \dfrac{\hat \lambda^k +  \nabla u}{ |\hat \lambda^k  +  \nabla u| }\right) \notag \\
	&=\lim_{t \rightarrow 0}\dfrac{1}{t}\left( \dfrac{\hat \lambda^k+  \nabla u + t\nabla v}{  |\hat \lambda^k +  \nabla u|}\bigg(1-\dfrac{1}{2}\dfrac{2t \langle  \hat \lambda^k +  \nabla u, \nabla v\rangle  + t^2 |\nabla v|^2 }{\|\hat \lambda^k +  \nabla u\|^2} + \mathcal{O}(t^2)\bigg) - \dfrac{\hat \lambda^k+  \nabla u}{ |\hat \lambda^k +  \nabla u| }\right) \notag \\
	& = \lim_{t \rightarrow 0}\dfrac{1}{t}\left( \dfrac{t\nabla v}{  |\hat \lambda^k +  \nabla u|} - \dfrac{t \langle  \hat \lambda^k +  \nabla u, \nabla v \rangle (\hat \lambda^k+  \nabla u )}{|\hat \lambda^k +  \nabla u|^3}  +  \mathcal{O}(t^2) \right) \\
	& = \dfrac{\nabla v}{  |\hat \lambda^k +  \nabla u|} - \dfrac{ \langle  \hat \lambda^k  +  \nabla u, \nabla v \rangle (\hat \lambda^k+  \nabla u )}{|\hat \lambda^k +  \nabla u|^3}.
	\end{align}
	For any $u$ such that  $|\hat \lambda^k+ \nabla u | = {\alpha}/{\sigma_k} $, by \cite{Sch} (Proposition 4.3.1), we have 
	\[
	\text{co}\{ (\nabla_u P_1(u)), (\nabla_uP_2(u))\} = A^P_u (\nabla \cdot) \subset \partial_u P(u),
	\]
	which leads to \eqref{eq:inclusion:iso} and \eqref{eq:first:minus}. By \cite{CL} (Corollary 2.6.6), we have
	\[
	\nabla^* A^P_u(\nabla v) \subset (\nabla^* \partial_uP(u))(v) = \partial_u(\nabla^*P(u))(v).
	\]
\end{proof}

Similarly, we have the following lemma. 
\begin{lemma}\label{lem:positive:iso:p}
	$F^I(u)$ is a semismooth function of $u$. We have
	\begin{align}
	&A^S_u (\nabla v)\subset	(\partial_{u} S_{\frac{\alpha}{\sigma_k}}^{I}(\hat\lambda^k+\nabla u)(v), \label{eq:inclusion:iso:p} \\
	A^S_u(\nabla v ) &= \begin{cases}
	0, &|\hat \lambda^k+ \nabla u | < \frac{\alpha}{\sigma_k} \\
	\left(\nabla v - \dfrac{\alpha}{\sigma_k}\left( \dfrac{\nabla v}{|\hat \lambda^k+ \nabla u |} - s\dfrac{\langle \hat \lambda^k + \nabla u, \nabla v \rangle (\hat \lambda^k + \nabla u) }{|\hat \lambda^k+ \nabla u |^3} \right)\right), \ s \in [0,1], \  &|\hat \lambda^k+ \nabla u | = \frac{\alpha}{\sigma_k} \\
	A_{u}^{+}(\nabla v) :=\left(\nabla v - \dfrac{\alpha}{\sigma_k}\left( \dfrac{\nabla v}{|\hat \lambda^k+ \nabla u |} - \dfrac{\langle \hat \lambda^k + \nabla u, \nabla v \rangle (\hat \lambda^k + \nabla u) }{|\hat \lambda^k+ \nabla u |^3} \right)\right). &|\hat \lambda^k+ \nabla u | > \frac{\alpha}{\sigma_k}
	\end{cases}\label{eq:first:minus1p}
	\end{align}
	Throughout this paper, we choose the following generalized gradient for computations
	\[
	A_{u}^I(\nabla v) :=  \chi_{u, \hat \lambda^k}^{+} A_{u}^{+}(\nabla v) \subset  A^S_u(\nabla v ) ,
	\]
	where we always choose $s=1$ in \eqref{eq:first:minus1p}.
	The Newton derivative of \eqref{eq:newton:iso:u:p} can be chosen as 
	\begin{equation}\label{eq:sub:iso:primal}
	H +\sigma_k \nabla^*\nabla - \sigma_k\nabla ^* A_u^I \nabla,
	\end{equation}
	which is positive definite with lower bound on $u$ and  thus satisfies the regularity condition,
	\begin{equation}
	\langle (H +\sigma_k \nabla^*\nabla - \sigma_k\nabla ^* A_u^I \nabla)u, u\rangle_{X}  \geq \|u\|_{H}^2.
	\end{equation}
\end{lemma}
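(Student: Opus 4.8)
The plan is to reduce the lemma to Lemma~\ref{eq:proj:semismooth} via the sum and chain rules for the Clarke generalized gradient, so that the only genuinely new ingredient is the coercivity estimate.

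First I would settle the semismoothness. By \eqref{eq:proj:thres:rela}, the map $S_{\frac{\alpha}{\sigma_k}}^I(\hat\lambda^k+\nabla u)$ is the difference of the affine map $\hat\lambda^k+\nabla u$ and $P(u)=\mathcal{P}_{\frac{\alpha}{\sigma_k}}(\hat\lambda^k+\nabla u)$, and $P(u)$ is semismooth by Lemma~\ref{eq:proj:semismooth}; hence the difference is semismooth. Adding the affine term $Hu-f+\nabla^*\lambda^k$ and composing on the left with the bounded linear operators $\nabla^*$ and $\sigma_k\nabla^*\nabla$ preserves semismoothness (Lemma~\ref{lem:vector:semismooth:newton}), so $F^I$ is semismooth on $u$.

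Next I would read off the Newton derivative. Applying the sum rule $\partial_u S_{\frac{\alpha}{\sigma_k}}^I=\nabla-\partial_u P$ (the relation following \eqref{eq:proj:thres:rela}) and substituting the three branches of $A^P_u$ from \eqref{eq:first:minus} produces exactly \eqref{eq:first:minus1p}: on $\{|\hat\lambda^k+\nabla u|<\alpha/\sigma_k\}$ the branch $A^P_u=\nabla\,\cdot$ cancels against $\nabla$ and leaves $0$, while on the remaining two branches one obtains $\nabla v-\frac{\alpha}{\sigma_k}(\cdots)$. The inclusion \eqref{eq:inclusion:iso:p} then follows from the chain-rule inclusion $\nabla^*A^P_u(\nabla v)\subset\partial_u(\nabla^*P(u))(v)$ recorded at the close of Lemma~\ref{eq:proj:semismooth}. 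Differentiating $F^I$ term by term and selecting $s=1$, i.e.\ $A_u^I=\chi_{u,\hat\lambda^k}^{+}A_u^{+}$, yields the Newton derivative \eqref{eq:sub:iso:primal}.

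The main obstacle, and the only substantive computation, is the coercivity bound. It suffices to prove $\langle A_u^I\nabla u,\nabla u\rangle\leq\|\nabla u\|^2$, since then $\langle(\sigma_k\nabla^*\nabla-\sigma_k\nabla^*A_u^I\nabla)u,u\rangle=\sigma_k(\|\nabla u\|^2-\langle A_u^I\nabla u,\nabla u\rangle)\geq0$, and adding $\langle Hu,u\rangle$ gives the claimed lower bound $\|u\|_H^2$. I would split by the active set: on $\{\chi_{u,\hat\lambda^k}^{+}=0\}$ the integrand vanishes, while on $\{|\hat\lambda^k+\nabla u|\geq\alpha/\sigma_k\}$ a pointwise expansion gives
\[
\langle A_u^{+}\nabla u,\nabla u\rangle=|\nabla u|^2-\frac{\alpha}{\sigma_k}\,\frac{|\hat\lambda^k+\nabla u|^2|\nabla u|^2-\langle\hat\lambda^k+\nabla u,\nabla u\rangle^2}{|\hat\lambda^k+\nabla u|^3}.
\]
The subtracted term is nonnegative by the pointwise Cauchy--Schwarz inequality $\langle\hat\lambda^k+\nabla u,\nabla u\rangle^2\leq|\hat\lambda^k+\nabla u|^2|\nabla u|^2$, so $\langle A_u^{+}\nabla u,\nabla u\rangle\leq|\nabla u|^2$ pointwise; summing over the active set (with zero contribution off it) yields the global bound. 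Finally, the coercivity $\langle(\,\cdot\,)u,u\rangle\geq\|u\|_H^2\geq c\|u\|_2^2$ together with the positive definiteness of $H$ makes \eqref{eq:sub:iso:primal} nonsingular with uniformly bounded inverse, i.e.\ the regularity condition holds. In contrast to Lemma~\ref{lem:positive:iso:pd}, no feasibility hypothesis on an auxiliary variable is needed here, since the Cauchy--Schwarz bound is automatic from the projection structure.
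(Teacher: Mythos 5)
Your proposal is correct and follows essentially the same route as the paper: semismoothness of $F^I$ via the decomposition \eqref{eq:proj:thres:rela} and Lemma~\ref{eq:proj:semismooth}, the Newton derivative read off branchwise from \eqref{eq:first:minus}, and the lower bound $\|u\|_H^2$ obtained from the pointwise Cauchy--Schwarz inequality $\langle \hat\lambda^k+\nabla u,\nabla u\rangle^2 \le |\hat\lambda^k+\nabla u|^2|\nabla u|^2$ on the active set. Your packaging of the coercivity step as the single pointwise bound $\langle A_u^{+}\nabla u,\nabla u\rangle \le |\nabla u|^2$ is a slightly cleaner rearrangement of the paper's term-by-term expansion, and your closing observation that no feasibility hypothesis on an auxiliary variable is needed (unlike Lemma~\ref{lem:positive:iso:pd}) is accurate.
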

\begin{proof}
	By Lemma \ref{eq:proj:semismooth} and \eqref{eq:proj:thres:rela}, we see $S_{\frac{\alpha}{\sigma_k}}^{I}(\hat \lambda^k+  \nabla u ) $ is semismooth and the semismoothness of $F^I(u)$ then follows. Furthermore, by  \eqref{eq:proj:thres:rela:gradient}, we obtain \eqref{eq:inclusion:iso:p}. Since
	\[
	H +\sigma_k \nabla^*\nabla - \sigma_k\nabla ^* A_u^I \nabla \subset \partial F^I(u) = H +\sigma_k \nabla^*\nabla - \sigma_k \nabla^*  \partial_u(S_{\frac{\alpha}{\sigma_k}}^{I}(\frac{\lambda^k}{\sigma_k}+\nabla u)),
	\]
	we can choose $	I +\sigma_k \nabla^*\nabla - \sigma_k\nabla ^* A_u^I \nabla$ as a Newton derivative for $F^I(u)$. 

	For the positive definiteness of the Newton derivative \eqref{eq:sub:iso:primal}, we have
	\begin{align*}
	&\langle (H + \sigma_k \nabla^* \nabla -\nabla ^* A_u^I \nabla)u, u\rangle_{X} = \|u\|_{H}^2 +\sigma_k\langle \nabla u, \nabla u \rangle_{Y} -\sigma_k \langle A_u^I \nabla u, \nabla u \rangle_{Y} \\
	=&  \|u\|_{H}^2 + \sigma_k\langle  (I-\chi_{u, \hat \lambda}^{+ }) \nabla u, \nabla u \rangle_{Y} + \frac{\alpha}{\sigma_k} \langle  \chi_{u, \hat \lambda}^{+ }\dfrac{\nabla u}{  |\hat \lambda^k +  \nabla u|} -  \chi_{u, \hat \lambda}^{+ }\dfrac{ \langle  \hat \lambda^k  +  \nabla u, \nabla u \rangle (\hat \lambda^k+  \nabla u )}{|\hat \lambda^k +  \nabla u|^3}, \nabla u\rangle_{Y} \\
	=&  \|u\|_{H}^2 + \sigma_k\langle  (I-\chi_{u, \hat \lambda}^{+ }) \nabla u, \nabla u \rangle_{Y} + \frac{\alpha}{\sigma_k} \langle  \chi_{u, \hat \lambda}^{+ }\dfrac{\nabla u}{  |\hat \lambda^k +  \nabla u|} , \nabla u\rangle_{Y} \\
	&-\alpha \langle \chi_{u, \hat \lambda}^{+ }\dfrac{ \langle  \hat \lambda^k  +  \nabla u, \nabla u \rangle (\hat \lambda^k+  \nabla u )}{|\hat \lambda^k +  \nabla u|^3}, \nabla u\rangle_{Y}  
	\geq  \|u\|_{H}^2,
	\end{align*}
	since $\chi_{u, \hat \lambda}^{+ } \leq I$ and by the comparison the integral functions
	\begin{align*}
	&\langle \chi_{u, \hat \lambda}^{+ }\dfrac{ \langle  \hat \lambda^k  +  \nabla u, \nabla u \rangle (\hat \lambda^k+  \nabla u )}{|\hat \lambda^k +  \nabla u |^3}, \nabla u\rangle =  \chi_{u, \hat \lambda}^{+ }\dfrac{ \langle  \hat \lambda^k  +  \nabla u, \nabla u \rangle^2 }{|\hat \lambda^k +  \nabla u|^3} \\
	& \leq  \chi_{u, \hat \lambda}^{+ }\dfrac{ |  \hat \lambda^k  +  \nabla u|^2 |\nabla u|^2}{|\hat \lambda^k +  \nabla u|^3} = \chi_{u, \hat \lambda}^{+ }\dfrac{ |\nabla u|^2}{|\hat \lambda^k +  \nabla u|}.
	\end{align*}
\end{proof}
The semismooth Newton method for solving \eqref{eq:u:alm:suntoh1} follows
\begin{equation}\label{eq:ssn:PT:iso}
(H + \sigma_k \nabla ^* \nabla - \sigma_k \nabla^* A_{u^l}^I \nabla)\delta u^{l+1} = - F^I(u^l).
\end{equation}
\subsection{ALM with Semismooth Newton: the Anisotropic Case}
%For the Augmented Lagrangian method for anisotropic TV case \eqref{eq:ROF} is as follows,
%\begin{subequations}\label{eq:alm:up:ani}
%\begin{align}
%(u^{k+1}, p^{k+1}) &= \argmin_{u,p} L(u,p;\lambda^k), \\
% L(u,p;\lambda^k)&= \frac{1}{2}\|u-f\|_{2}^2 + \alpha \|p\|_{1} + \langle \lambda^k, \nabla u -p \rangle + \frac{\sigma_k}{2}\|\nabla u -p\|_{2}^2, \\
% &= \frac{1}{2}\|u-f\|_{2}^2 + \alpha \|p\|_{1} + \frac{1}{2\sigma_k}\|\sigma_k(\nabla u -p) + \lambda^k\|_{2}^2- \frac{1}{2\sigma_k} \|\lambda^k\|_{2}^2, \\
%\lambda^{k+1} &= \lambda^k + \sigma_k(\nabla u^{k+1} - p^{k+1}),%\label{eq:update:lambda}
%\end{align}
%\end{subequations}
%where for $p=(p_1, p_2)^{T} \in \mathbb{R}^2$, the definition of the anisotropic $\|\cdot\|_{1}$ norm is based on
%\begin{equation}\label{eq:l1:anis}
%|p| :=  |p_1| +  |p_2|.
%\end{equation}
%We will use the semismooth Newton method to solve \eqref{eq:alm:up:ani}. The optimality conditions with fixed $\sigma_k$ and $\lambda^k$ for \eqref{eq:alm:up:ani} are 
%\begin{align}
%&&u - f + \nabla^* \lambda^k + \sigma_k \nabla^*(\nabla u - p) = 0, \label{eq:opti:u:ani} \\
%&& \partial \alpha \| p\|_{1} - \lambda^k + \sigma_k(p-\nabla u) \ni 0,\label{eq:opti:p:ani}
%\end{align}
%where $(u,p)=(u^{k+1},p^{k+1})$ are the optimal solutions of \eqref{eq:alm:up:ani}.
%The equation \eqref{eq:opti:p:ani} leads to
%\begin{equation}\label{eq:p:shreshold}
%\lambda^k + \sigma_k \nabla u \in (\sigma_k I + \partial \alpha \|\cdot\|_{1})p \Rightarrow p  = (I + \frac{1}{\sigma_k} \partial \alpha \|\cdot\|_{1})^{-1}(\frac{\lambda^k + \sigma_k \nabla u}{\sigma_k}).
%\end{equation}
For the anisotropic $l_1$ norm \eqref{eq:l1:iso:pd},
we have 
\begin{equation}\label{eq:h:ani:newton}
p = (p_1,p_2)^T=S_{\frac{\alpha}{\sigma_k}}(\hat\lambda^k+\nabla u) = (S_{\frac{\alpha}{\sigma_k}}(\hat\lambda_1^k +\nabla_1 u),S_{\frac{\alpha}{\sigma_k}}(\hat\lambda_2^k +\nabla_2 u) )^T, 
\end{equation}
\begin{equation}\label{eq:p:shreshold}
p_i =S_{\frac{\alpha}{\sigma_k}}(\hat\lambda_i^k +\nabla_i u) = \begin{cases}
\hat \lambda_i^k + \nabla_i u - {\alpha}/{\sigma_k}, \quad  &\hat \lambda_i^k + \nabla_i u  >  {\alpha}/{\sigma_k}, \\
0,  \quad  &|\hat \lambda_i^k + \nabla_i u|  \leq   {\alpha}/{\sigma_k}, \\
\hat \lambda_i^k + \nabla_i u + {\alpha}/{\sigma_k}, \quad  & \hat \lambda_i^k + \nabla_i u  <  -{\alpha}/{\sigma_k}.
\end{cases} \quad i=1,2,
\end{equation}
%and
%\begin{equation}
%p_2 = \begin{cases}
%\dfrac{\lambda_2^k}{\sigma_k} + \nabla_y u - \dfrac{\alpha}{\sigma_k}, \quad  &\dfrac{\lambda_2^k}{\sigma_k} + \nabla_y u  >  \dfrac{\alpha}{\sigma_k}, \\
%0,  \quad  &|\dfrac{\lambda_2^k}{\sigma_k} + \nabla_y u|  \leq   \dfrac{\alpha}{\sigma_k}, \\
%\dfrac{\lambda_2^k}{\sigma_k} + \nabla_y u + \dfrac{\alpha}{\sigma_k}, \quad  &\dfrac{\lambda_2^k}{\sigma_k} + \nabla_y u  <  -\dfrac{\alpha}{\sigma_k}.
%\end{cases}
%\end{equation}
With \eqref{eq:p:shreshold}, the equation \eqref{eq:u:alm:suntoh1}  becomes
\begin{equation}\label{eq:u:cancel:p}
F^A(u)=0, \quad F^A(u): = Hu - f + \nabla^* \lambda^k + \sigma_k \nabla^*\nabla u - \sigma_k \nabla^* 
(I + \frac{\alpha}{\sigma_k} \partial  \|\cdot\|_{1})^{-1}({\lambda^k}/{\sigma_k} +  \nabla u).
\end{equation}
The Newton derivative is of critical importance for the semismooth Newton method to solve the equation \eqref{eq:u:cancel:p}.
%Define
%\begin{equation}\label{eq:h:ani:newton}
%H(u): =  
%(I + \frac{\alpha}{\sigma_k} \partial  \|\cdot\|_{1})^{-1}(\frac{\lambda^k}{\sigma_k} +  \nabla u).
%\end{equation}
Let's first introduce
\[
\chi_{1, \hat \lambda^k}^{+,s} = \begin{cases}
1, \ \  \ |\hat \lambda_1^k + \nabla_1 u| > {\alpha}/{\sigma_k}, \\
s, \ \  \ |\hat \lambda_1^k  + \nabla_1 u| = {\alpha}/{\sigma_k},\\
0, \ \ \  |\hat \lambda_1^k  + \nabla_1 u| < {\alpha}/{\sigma_k},
\end{cases} \quad
\chi_{2, \hat \lambda^k}^{+,s} = \begin{cases}
1, \ \  \ |\hat \lambda_2^k  + \nabla_2 u| > {\alpha}/{\sigma_k}, \\
s, \ \  \ |\hat \lambda_2^k  + \nabla_2 u| = {\alpha}/{\sigma_k},\\
0, \ \ \  |\hat \lambda_2^k  + \nabla_2 u| < {\alpha}/{\sigma_k}.
\end{cases}
s \in [0,1].
\]
Introduce $\chi_{i, \hat \lambda^k}^{+} : = \chi_{1, \hat \lambda^k}^{+,s}$ for $s=1$ with $i=1,2$.
For the Newton derivative of equation \eqref{eq:u:cancel:p}, we have the following lemma.
\begin{lemma}\label{lem:positive:ani:p}
	The function $F^A(u)$ is semismooth on $u$. We  have
		\begin{equation}\label{eq:subdifffer:ani:threshold}
	\left\{\chi_{\lambda^k, u}^{i,s} \nabla_i \ | \ s\in [0,1] \right \} = \partial_{u}(S_{\frac{\alpha}{\sigma_k}}(\dfrac{\lambda_i^k}{\sigma_k} +\nabla_i u)), \quad i=1,2.
	%\left\{\chi_{\lambda^k, u}^{2,s}\dfrac{\sigma_k}{\alpha}\dfrac{\langle \lambda_2^k + \sigma_k \nabla_2 u,  \nabla_2 \cdot \  \rangle }{ |\lambda_2^k + \sigma_k \nabla_2 u |} \ | \ s\in [0,1] \right\} =	\partial_{u}(\max (1.0, \dfrac{|\lambda_2^k + \sigma_k \nabla_2 u|}{\alpha})) , 
	\end{equation} 	
	The Newton derivative of the equation \eqref{eq:u:cancel:p} can be choose as
\begin{equation}\label{eq:gene:newton:deri:ani}
H + \sigma_k \nabla^*\nabla - \sigma_k \nabla^* A_u^A(\nabla),
\end{equation}
where $A_u^A(\nabla)$ can be chosen as the Newton derivative of $H(u)$	
\begin{equation}\label{eq:subdiff:ani:shreshold12}
A_u^A(\nabla) =\begin{bmatrix}
\chi_{1, \hat \lambda^k}^{+}  & 0 \\
0 & \chi_{2, \hat \lambda^k}^{+} 
\end{bmatrix}  \begin{bmatrix} \nabla_1 \\  \nabla_2\end{bmatrix}.
\end{equation}
Furthermore, the Newton derivative is positive definite and satisfies the regularity condition,
\begin{equation}
\langle (H + \sigma_k \nabla^*\nabla - \sigma_k \nabla^* A_u^A(\nabla))u, u\rangle_{X} \geq \|u\|_{H}^2.
\end{equation}
\end{lemma}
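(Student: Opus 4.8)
The plan is to mirror the argument of Lemma~\ref{lem:positive:iso:p}, exploiting that in the anisotropic case the thresholding decouples into the two scalar soft-thresholds $S_{\frac{\alpha}{\sigma_k}}(\hat\lambda_i^k+\nabla_i u)$, $i=1,2$, so that each factor can be analysed separately. First I would record that the scalar soft-threshold $t\mapsto S_{\frac{\alpha}{\sigma_k}}(t)$ is piecewise affine, hence a $PC^1$ (indeed $PC^\infty$) function and therefore semismooth by \cite{MU} (Proposition 2.26); composing with the affine, differentiable map $u\mapsto \hat\lambda_i^k+\nabla_i u$ preserves semismoothness by \cite{MU} (Proposition 2.9), exactly as in Lemma~\ref{eq:proj:semismooth}. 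Since $F^A(u)$ is the sum of the affine term $Hu-f+\nabla^*\lambda^k+\sigma_k\nabla^*\nabla u$ and $-\sigma_k\nabla^*$ applied to this semismooth map, the semismoothness of $F^A$ follows. For \eqref{eq:subdifffer:ani:threshold} I would compute the Clarke subdifferential of the scalar $S_{\frac{\alpha}{\sigma_k}}$ directly: its two smooth pieces have derivatives $1$ (on $\{|\cdot|>\alpha/\sigma_k\}$) and $0$ (on $\{|\cdot|<\alpha/\sigma_k\}$), so by \cite{Sch} (Proposition 4.3.1) the generalized gradient at the breakpoints $|\cdot|=\alpha/\sigma_k$ is the convex hull $[0,1]$; the chain rule through the affine inner map then yields $\{\chi^{i,s}_{\lambda^k,u}\nabla_i\mid s\in[0,1]\}$.

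For the Newton derivative I would invoke Lemma~\ref{lem:vector:semismooth:newton} to assemble the derivative componentwise, choose the representative value $s=1$ so that $\chi^{+}_{i,\hat\lambda^k}\in\{0,1\}$, and use the composition rule \cite{CL} (Corollary 2.6.6) to move $\nabla^*$ inside the subdifferential, giving $\nabla^*A_u^A(\nabla)\subset\partial_u(\nabla^* S_{\frac{\alpha}{\sigma_k}}(\hat\lambda^k+\nabla u))$ with the diagonal operator $A_u^A(\nabla)$ of \eqref{eq:subdiff:ani:shreshold12}. Hence $H+\sigma_k\nabla^*\nabla-\sigma_k\nabla^*A_u^A(\nabla)$ is an admissible Newton derivative of $F^A$.

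The remaining, and only genuinely computational, step is the coercivity estimate. Writing out the quadratic form and using $A_u^A(\nabla)u=(\chi^{+}_{1,\hat\lambda^k}\nabla_1 u,\ \chi^{+}_{2,\hat\lambda^k}\nabla_2 u)^T$, I would split
\[
\langle (H+\sigma_k\nabla^*\nabla-\sigma_k\nabla^*A_u^A(\nabla))u,u\rangle_X
=\|u\|_H^2+\sigma_k\sum_{i=1}^{2}\langle (1-\chi^{+}_{i,\hat\lambda^k})\nabla_i u,\nabla_i u\rangle_X.
\]
Because $\chi^{+}_{i,\hat\lambda^k}\in\{0,1\}\le 1$, each summand $\langle (1-\chi^{+}_{i,\hat\lambda^k})\nabla_i u,\nabla_i u\rangle_X$ is nonnegative, so the whole expression is bounded below by $\|u\|_H^2$. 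Notably this is simpler than the isotropic Lemma~\ref{lem:positive:iso:p}: the anisotropic threshold derivative is diagonal with scalar $\{0,1\}$ entries, so there are no cross terms to control by Cauchy--Schwarz and, unlike Lemmas~\ref{lem:positive:iso:pd} and \ref{lem:positive:ani:pd}, no feasibility hypothesis on an auxiliary variable is required. The lower bound $\|u\|_H^2$ together with the positive definiteness of $H$ then yields uniform invertibility, i.e. the regularity condition. The main obstacle is thus not the coercivity but the careful bookkeeping of the Clarke chain rule at the breakpoints $|\hat\lambda_i^k+\nabla_i u|=\alpha/\sigma_k$, which I would handle verbatim as in Lemma~\ref{eq:proj:semismooth}.
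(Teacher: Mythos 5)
Your proposal is correct and follows essentially the same route as the paper: identify the piecewise-affine selection functions of the scalar soft-threshold, compute the derivatives $0$ and $1$ on the smooth pieces and take the convex hull at the breakpoints, assemble the Newton derivative componentwise via Lemma~\ref{lem:vector:semismooth:newton} and the linearity of $\nabla^*$, and obtain coercivity from $\chi^{+}_{i,\hat\lambda^k}\leq I$ with no cross terms and no feasibility hypothesis needed. The only cosmetic difference is that you analyse the scalar threshold first and compose with the affine map, whereas the paper works directly with $H_i(u)$ as a function of $u$; the content is identical.
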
 
\begin{proof}
%Introduce $L(t) :=S_{\frac{\alpha}{\sigma_k}}(t)$ with $t\in \mathbb{R}^2$ and $G(u):= {\lambda^k}/{\sigma_k} +\nabla u$.
%Since $L(t)$ is a $PC^1$ function \cite{Cla, KKU1} with anisotropic $l_1$ norm and $G(u)$ is a smooth and linear function of $u$, we see that the composition $L(G(u))$ is semismooth on $u$ \cite{MU} (Proposition 2.9). Then the semismoothness of $H(u) = \nabla^*L(G(u))$ and $F^A(u)$ follow, since $\nabla^*$ is a linear, bounded operators and differentiable mapping in finite dimensional spaces.
%
%%By \cite{CL} (corollary 2.6.6), we know
%%\[
%% (\partial H(u))(v) = \nabla^* \partial S_{\frac{\alpha}{\sigma_k}}(U)|_{U = \frac{\lambda^k}{\sigma_k}+\nabla u} \nabla v.
%%\]
%Actually for $z \in \mathbb{R}$, for the convex function $S_{\frac{\alpha}{\sigma_k}}(z) $,  we have
%\[
%\partial_{z}S_{\frac{\alpha}{\sigma_k}}(z) = \begin{cases}
%1, &|z| > \frac{\alpha}{\sigma_k}, \\
%s, \ \ s \in [0,1], &|z| < \frac{\alpha}{\sigma_k}, \\
%0, &|z| < \frac{\alpha}{\sigma_k}.
%\end{cases}
%\quad \partial_u(S_{\frac{\alpha}{\sigma_k}}(\dfrac{\lambda_i^k}{\sigma_k} +\nabla_i u) ) =  \partial_{z}S_{\frac{\alpha}{\sigma_k}}(z)|_{z = \frac{\lambda_i^k}{\sigma_k} +\nabla_i u} \nabla_i.
%\]
%Similar to \eqref{eq:newton:vector:deri}, we can choose $A_k(u)$ for the generalized Newton derivative for $\partial S_{\frac{\alpha}{\sigma_k}}(\lambda^k/\sigma_k + \nabla u)$. 
It can be seen that $H_i(u) = S_{\frac{\alpha}{\sigma_k}}({\lambda_i^k}/{\sigma_k} +\nabla_i u)$ is piecewise differentiable, $i=1,2$. Take $H_1(u)$ for example. Denote $H_1^1(u) ={\lambda_1^k}/{\sigma_k} + \nabla_1 u - {\alpha}/{\sigma_k} $ and $H_1^2(u)=0$
 being  \emph{selection functions} of $H_1(u)$. For $|{\lambda_1^k}/{\sigma_k} + \nabla_1 u - {\alpha}/{\sigma_k}| >  {\alpha}/{\sigma_k}$,  
 \[ 
 H_1(u) = H_1^1(u), \quad \nabla_u H_1(u) = \nabla_u H_1^1(u) = \nabla_1,
 \]
 and while  $|{\lambda_1^k}/{\sigma_k} + \nabla_1 u - {\alpha}/{\sigma_k}| < {\alpha}/{\sigma_k}$,  
  \[ 
 H_1(u) = H_1^2(u), \quad \nabla_u H_1(u) = \nabla_u H_1^2(u)=0.
 \]
 Finally, while $|{\lambda_1^k}/{\sigma_k} + \nabla_1 u - {\alpha}/{\sigma_k}| = {\alpha}/{\sigma_k}$,
 \[
 \partial_u H_1(u) = \text{co} \{  \nabla_u H_1^1(u),  \nabla_u H_1^2(u)\} = \text{co}\{ 0, \nabla_1 \},
 \]
 which leads to \eqref{eq:subdifffer:ani:threshold}. By Lemma \ref{lem:vector:semismooth:newton}, we found that $A_k(u)$ in \eqref{eq:subdiff:ani:shreshold12} can be chosen as a Newton derivative of $H(u)$.
Still, since $\nabla^*$ is a linear, bounded and thus is a differentiable mapping,  the Newton derivative of $F^A(u)$ can be choose as \eqref{eq:gene:newton:deri:ani} \cite{KK} (Lemma 8.15). For the positive definiteness, with $\chi_{1, \hat \lambda^k}^{+} \leq I$ and $ \chi_{2, \hat \lambda^k}^{+} \leq I$, we see
\begin{align*}
\langle (H + \sigma_k \nabla^*(\nabla - A_u^A(u)))u, u\rangle_{X} &= \|u\|_{H}^2 + \sigma_k \left[\langle \nabla_1 u, (I-\chi_{1, \hat \lambda^k}^{+})\nabla_1 u \rangle_X +   \langle \nabla_2 u, (I-\chi_{2, \hat \lambda^k}^{+})\nabla_2 u \rangle_X\right] \\
& \geq  \|u\|_{H}^2. 
\end{align*}
\end{proof}
Then the semismooth Newton method for solving equation \eqref{eq:u:cancel:p} follows
\begin{equation}\label{eq:ssn:PT:ani}
(H + \sigma_k \nabla^*\nabla - \sigma_k\nabla^* A_{u^l}^A(\nabla))(\delta u^{l+1}) = -F^A(u^l).
\end{equation}
It can be checked that the linear operator in \eqref{eq:sub:iso:primal} is also self-adjoint. We thus can use the efficient conjugate gradient (CG) to solve \eqref{eq:ssn:PT:iso} and \eqref{eq:ssn:PT:ani}. 
We conclude this section by the following Algorithm \ref{alm:SSN_PT}, i.e., the semismooth Newton method for the primal problem involving with the soft thresholding operator (SSNPT) \eqref{eq:u:alm:suntoh1}, which is subproblem for the $k$th iteration of ALM applying to \eqref{eq:ROF}. The soft thresholding operators are frequently seen in compressed sensing and we refer \cite{LST} for the celebrated framework of semismooth Newton based ALM for compressed sensing, which gives us a lot of inspiration. 

\begin{algorithm}[h]
	\caption{Semismooth Newton method with solving primal problem involving the soft thresholding operator (SSNPT) for  \eqref{eq:u:alm:suntoh1}
		\label{alm:SSN_PT}}
	\begin{algorithmic}
	\STATE {Given corrupted image $f$,  multiplier $\lambda^k$,  initial value for Newton $\delta u^0=0$, step size $\sigma_k$ of ALM, choose $\mu \in (0,1/2)$, $\theta \in (0,1)$, $\eta^0 =1$. \\
	Iterate the following steps for $l=0, 1, \cdots, $ unless some stopping criterion associated with the nonlinear system \eqref{eq:u:alm:suntoh1} is satisfied.}
		%\Statex {\textbf{Output}: Optimal parameters $\lambda_1$ and $\lambda_2$.}
		%	\STATE {\textbf{Initialise}: $p=0$}
		%	\WHILE{Unless some stopping criterion is satisfied}
		%	\STATE{Compute the right hand side of the linear system \eqref{eq:newton:equation:iso} for isotropic (or \eqref{} for anisotropic case):}
		%	\STATE{$\qquad b=(x^{2})^{d}f-\frac{1}{\epsilon_{0}}\big (f_{0}(p)+g_{0}(p) \big )-\frac{1}{\epsilon_{1}}\big(d^{\ast}f_{1}(x p)+x^{\ast}g_{1}(x p) \big)$}
		\STATE {\textbf{Step 1}: Unless some stopping criterion is satisfied, solve the linear system \eqref{eq:ssn:PT:iso} for $\delta u^{k+1}$ of the isotropic case with iterative method (CG) (or solve \eqref{eq:ssn:PT:ani} of the anisotropic case with CG, i.e., Conjugate Gradient method)}
    	\STATE {\textbf{Step 2}: Do backtracking Armijo line search as follows: }
    	\State{Compute $\xi^l = (H + \sigma_k \nabla^*\nabla - \sigma_k\nabla^* A_{u^l}^X(u^l))(u^l) \in \partial_u \Phi(u; \lambda^k,\sigma_k )|_{u=u^l} $.   $A_{u^l}^X = A_{u^l}^I$ or $A_{u^l}^X = A_{u^l}^A$  and $\Phi(u; \lambda^k,\sigma_k ) $ is in \eqref{eq:augmented:lagrangian:only:u} depending on the isotropic or the anisotropic case.}
	    \WHILE{$$\Phi(u^l + \eta^j \delta u^{l+1};\lambda^k,\sigma_k ) >  \Phi(u^l; \lambda^k,\sigma_k ) + \mu \eta^j \langle \xi^l, \delta u^{l+1} \rangle, $$}
		%		\IF{$\|F(p)\|_{D}\le \mathrm{tol}$}
			\STATE {\quad \qquad \qquad \qquad \ \  $\eta^{j+1} = \eta^j \theta$,}
		%		\ENDIF      
		\ENDWHILE
		\STATE{\textbf{Step 3}: Update $u^{l+1} = u^l + \eta^j \delta u^{l+1}$}
		\STATE{Output $u^{k+1}$}
	\end{algorithmic}
\end{algorithm}

\section{Convergence of the Augmented Lagrangian Methods and the Corresponding Semismooth Newton Methods}\label{sec:convergece:ssn:alm}
For the proposed semismooth Newton based ALM methods, we need the convergence both for the inner semismooth Newton iterations and the outer ALM iterations. Let's begin with the inner Newton iterations followed by the convergence of ALM. 
\subsection{Convergence of the semismooth Newton method}
\begin{theorem}[Superlinear Convergence \cite{KK}] \label{thm:convergence:SSN}
	Suppose $x^*$ is a solution to $F(x)=0$ and that $F$ is Newton differentiable at $x^*$ with Newton derivative $G$. If $G$ is nonsingular for all $x \in N(x^*)$ and $\{ \|G(x)^{-1}\| : x \in N(x^*)\}$ is bounded, then the Newton iteration
	\[
	x^{l+1} = x^l-G(x^l)^{-1}F(x^l), 
	\]
	converges superlinearly to $x^*$ provided that $|x^0-x^*|$ is sufficiently small.
\end{theorem}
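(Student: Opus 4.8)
The plan is to run the standard error-recursion argument for Newton's method, controlling the error $e^l := x^l - x^*$ by pairing the Newton differentiability remainder estimate with the uniform bound on the inverse. First I would rewrite one step of the iteration in a form that exposes exactly the quantity that Newton differentiability controls. Since $F(x^*) = 0$ and $G(x^l) = G(x^* + e^l)$, one computes
\[
x^{l+1} - x^* = e^l - G(x^l)^{-1}F(x^l) = -G(x^l)^{-1}\bigl(F(x^*+e^l) - F(x^*) - G(x^*+e^l)e^l\bigr).
\]

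Next I would take norms and invoke both hypotheses. Set $M := \sup\{\|G(x)^{-1}\| : x \in N(x^*)\}$, which is finite by assumption. By Newton differentiability at $x^*$, for every $\epsilon > 0$ there is a $\delta > 0$ with $B_\delta(x^*) \subset N(x^*)$ such that $|F(x^*+h) - F(x^*) - G(x^*+h)h| \leq \epsilon|h|$ whenever $|h| \leq \delta$. Combining these, as long as $x^l \in B_\delta(x^*)$,
\[
|x^{l+1} - x^*| \leq M\,\bigl|F(x^*+e^l) - F(x^*) - G(x^*+e^l)e^l\bigr| \leq M\epsilon\,|e^l|.
\]

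The third step is to fix $\epsilon$ so that $M\epsilon \leq 1/2$ and then argue by induction. If $|x^0 - x^*| \leq \delta$, the displayed contraction gives $|x^{l+1} - x^*| \leq \tfrac{1}{2}|e^l| \leq \delta$, so every iterate stays in $B_\delta(x^*)$ and $|e^l| \leq 2^{-l}|e^0| \to 0$. I expect the main obstacle to be precisely this confinement step: one must ensure the iterates never leave the neighborhood on which both the boundedness of $G^{-1}$ and the remainder estimate hold, which is why $\epsilon$ is pinned down to produce a genuine contraction before the induction is launched.

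Finally, to sharpen the linear bound into the claimed superlinear rate, I would introduce the normalized remainder $r(h) := |F(x^*+h) - F(x^*) - G(x^*+h)h|\,/\,|h|$, so that the estimate above reads $|x^{l+1} - x^*| \leq M\,r(e^l)\,|e^l|$. Newton differentiability is exactly the statement that $r(h) \to 0$ as $|h| \to 0$; since $e^l \to 0$ by the previous step, we obtain
\[
\frac{|x^{l+1}-x^*|}{|x^l - x^*|} \leq M\,r(e^l) \longrightarrow 0,
\]
which is the asserted superlinear convergence. Note that semismoothness itself is not needed here — only the Newton differentiability and the uniform invertibility that the statement assumes — so the argument applies verbatim to each of the Newton derivatives $\mV^I$, $\mV^A$, and the primal operators constructed in the earlier sections.
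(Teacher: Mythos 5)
Your argument is correct and is precisely the classical proof of this result; the paper itself does not reprove the theorem but cites it from \cite{KK}, where the same error-recursion argument (rewrite $x^{l+1}-x^*$ as $-G(x^l)^{-1}\bigl(F(x^*+e^l)-F(x^*)-G(x^*+e^l)e^l\bigr)$, apply the uniform bound on $G^{-1}$ together with the Newton-differentiability remainder estimate, and close the induction with a contraction to keep the iterates in the neighborhood) is used. Your closing remark that only Newton differentiability and uniform invertibility are needed, not semismoothness per se, is also consistent with how the paper invokes the theorem via the regularity conditions established in Lemmas \ref{lem:positive:iso:pd}, \ref{lem:positive:ani:pd}, \ref{lem:positive:iso:p}, and \ref{lem:positive:ani:p}.
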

For the semismooth Newton method, once the Newton derivative exists and is nonsingular at the solution point, we can employ semismooth Newton method \cite{KK} possibly with some globalization strategy \cite{DFC}. 
%The existence of Newton derivative depends on the semismoothness of the corresponding functions or mappings. 
We now turn to the semismoothness for our cases, where each of the Newton derivative satisfies the regularity condition and is nonsingular (uniform regular) with a lower bound; see lemmas \ref{lem:positive:iso:pd}, \ref{lem:positive:ani:pd}, \ref{lem:positive:ani:p}, \ref{lem:positive:iso:p}. For the convergence of the semismooth Newton methods including Algorithm \ref{alm:SSN_PDP} and \ref{alm:SSN_PDD}, we refer to \cite{HS, SH} for the analysis of semismooth Newton under perturbations.
For the convergence of the semismooth Newton method in Algorithm \ref{alm:SSN_PT} with line search, we refer to \cite{LST} (Theorem 3.6) and \cite{ZST} (Theorems 3.4 and 3.5).

%Now we turn to semismooth Newton method for solving \eqref{eq:opti:u:lambda}. The semismooth Newton method for $F(x)=0$  can be written as 
%\begin{equation}\label{semi:smoothnewton:cal:newton:direc}
%\mV (x^l)^{-1}\delta x^{l+1} =  - F (x^l),
%\end{equation}
%where $\mV(x^l) \in \partial F(x^l)$ is the semismooth Newton derivative of $F$ at $x^l$, and $\mV(x)^{-1}$ exist and are uniformly bounded in a small neighborhood of the solution $x^*$ of $F(x^*)=0$. With globalization strategy when necessary including line search, 
%one can get Newton update $x^{l+1}$ with Newton direction $\delta x^{l+1}$ in \eqref{semi:smoothnewton:cal:newton:direc}.
% Once the globalization strategy is not needed, the semismooth Newton iteration can also be written as
% follows with updating $x^{l+1}$ directly
%\begin{equation}\label{semi:smoothnewton:sys}
%\mV(x^l)x^{l+1} = \mV(x^l)x^l  - F(x^l).
%\end{equation}
 Here, we follow the standard stopping criterion for the inexact augmented Lagrangian method \cite{Roc1, Roc2} and \cite{LST, ZZST}.
\begin{align}
& \Phi_k(u^{k+1},h^{k+1}) - \inf \Phi_k \leq \epsilon_k^2/2\sigma_k, \quad \sum_{k=0}^{\infty}\epsilon_k < \infty, \label{stop:a}        \tag{A} \\
& \Phi_k(u^{k+1},h^{k+1}) - \inf \Phi_k \leq \delta_k^2/2\sigma_k\|\lambda^{k+1}-\lambda^k\|^2, \quad \sum_{k=0}^{\infty}\delta_k < +\infty, \label{stop:b1} \tag{B1}\\
&\text{dist}(0, \partial \Phi_k(u^{k+1}, h^{k+1})) \leq \delta_k'/\sigma_k\|\lambda^{k+1} - \lambda^k\|, \quad 0 \leq \delta_k' \rightarrow 0. \label{stop:b2}\tag{B2}
\end{align}

We conclude this section with the following algorithmic framework of ALM with Algorithm \ref{alm:SSN_ALM}. Henceforth, we denote ALM-PDP or ALM-PDD as the ALM with the Algorithm \ref{alm:SSN_PDP} (SSNPDP) or Algorithm \ref{alm:SSN_PDD} (SSNPDD). We also denote  ALM-PT as the ALM with the Algorithm \ref{alm:SSN_PT} (SSNPT).
\begin{algorithm}[h]
	\caption{General framework of semismooth Newton based ALM for \eqref{eq:ROF}
		\label{alm:SSN_ALM}}
	\begin{algorithmic}
		\STATE {\textbf{Input}: corrupted image $f$, regularization parameter $\alpha$ for TV,  initial Lagrangian multiplier $\lambda^0$,  initial step size $\sigma_0$ and the largest step size $\sigma_{\infty}$ of ALM, step size update parameter $c>1$}
		\STATE {\textbf{Iteration}: Iterate the following steps for $k=0, 1, \cdots, $ unless some stopping criterion associated with the problem \eqref{eq:ROF} is satisfied.}	
%\STATE {\textbf{Input}: noisy image $f$,  multiplier $\lambda^k$, $u^k$, step size $\sigma_k$ of ALM, choose $\mu \in (0,1/2)$ and $\theta \in (0,1)$, }
%\Statex {\textbf{Output}: Optimal parameters $\lambda_1$ and $\lambda_2$.}
%	\STATE {\textbf{Initialise}: $p=0$}
%	\WHILE{Unless some stopping criterion is satisfied}
%	\STATE{Compute the right hand side of the linear system \eqref{eq:newton:equation:iso} for isotropic (or \eqref{} for anisotropic case):}
%	\STATE{$\qquad b=(x^{2})^{d}f-\frac{1}{\epsilon_{0}}\big (f_{0}(p)+g_{0}(p) \big )-\frac{1}{\epsilon_{1}}\big(d^{\ast}f_{1}(x p)+x^{\ast}g_{1}(x p) \big)$}
\STATE {\textbf{Step 1}: With $\sigma_k$, $\lambda^k$, solve the minimization problem \eqref{eq:alm:up} with $u^k$  (possibly $h^k$ for Algorithm  \ref{alm:SSN_PDP} or Algorithm \ref{alm:SSN_PDD})  by \\
 	 \qquad \qquad Algorithm \ref{alm:SSN_PDP} or Algorithm \ref{alm:SSN_PDD} for the nonlinear system \eqref{eq:opti:u:lambda} or \eqref{eq:opti:u:lambda:ani} for $(u^{k+1}, h^{k+1})$; \\ 
	 \qquad \qquad  or Algorithm \ref{alm:SSN_PT} for the nonlinear system \eqref{eq:u:alm:suntoh1} for $u^{k+1}$.}
\STATE {\textbf{Step 2}: Update Lagrangian multiplier $\lambda^{k+1}$: }
\State{\qquad \qquad For Algorithm \ref{alm:SSN_PT} using \eqref{eq:update:lambda}, i.e.,   $$\lambda^{k+1} = \lambda^k + \sigma_k(\nabla u^{k+1} -p^{k+1}), \quad \text{with} \ \  p^{k+1}  = S_{\frac{\alpha}{\sigma_k}}(\frac{\lambda^k}{\sigma_k} + \nabla u^{k+1}).$$}
\qquad \qquad For Algorithm \ref{alm:SSN_PDP} or \ref{alm:SSN_PDD} using \eqref{eq:update:multiplier:projection}, i.e., 
$$\lambda^{k+1} = \mathcal{P}_{\alpha}(\lambda^k + \sigma_k \nabla u^{k+1}).$$
\STATE{\textbf{Step 3}: Update step size  $\sigma_{k+1} = c \sigma_k \leq \sigma_{\infty}$.}
%\STATE{Output $u^{k+1}$}
	\end{algorithmic}
\end{algorithm} 

\subsection{Convergence of the Augmented Lagrangian Method}
%Since it can be seen that if 
%we can get linear convergence of both the primal variable $u^k$ and the dual variable $\lambda^k$. For isotropic case, we can get the linear convergence of the dual variable $\lambda^k$.

It is well-known that the  augmented Lagrangian method  can be seen as applying proximal point algorithm to the dual problem \cite{Roc1,Roc2}. The convergence and the corresponding rate of augmented Lagrangian method are closely related to the convergence of the proximal point algorithm. Especially, the local linear convergence of the multipliers, primal or dual variables is mainly determined by the metric subregularities of the corresponding maximal monotone operators \cite{Roc1,Roc2, LE, LU}. We now turn to analysis the metric subregularity of the corresponding maximal monotone operators which is usually efficient for the asymptotic (or local) linear convergence of ALM.
 
Now we introduce some basic definitions and properties of multivalued mapping from convex analysis \cite{DR, LST}. Let $F: X \Longrightarrow Y $ be a multivalued mapping. The graph of $F$ is defined as the set
\[
\text{gph} F: = \{ (x,y) \in X\times Y| y\in F(x)\}.
\]
The inverse of $F$, i.e., $F^{-1}:  Y \Longrightarrow X$ is defined as the multivalued mapping whose graph is $\{(y,x)| (x,y) \in \text{gph} F\}$. The distance $x$ from the set $C\subset X$ is defined by
\[
\text{dist}(x,C): = \inf\{\|x-x'\|\ | \ x' \in C\}.
\]
Let's introduce the metrical subregularity for $F$ and the calmness \cite{DR, LST}.
%\begin{definition}[Error Bound \cite{LST, Rob}]\label{def:errorbound}
%Suppose $y \in Y$ and $F^{-1}(y) \neq \emptyset$.  A mapping $F: X \Longrightarrow Y$ is said to satisfy the error bound condition for the point $y$ with modulus $\kappa \geq 0$ if there exists $\epsilon >0$ such that if $x \in X$ with $\text{dist}(y, F(x)) \leq \epsilon$, then
%	\begin{equation}\label{eq:error:bound}
%	\text{dist}(x, F^{-1}(y)) \leq \kappa \text{dist}(y, F(x)).
%	\end{equation}
%\end{definition}
\begin{definition}[Metric Subregularity \cite{DR}]\label{def:metricregular}
	A mapping $F: X \Longrightarrow Y$ is called metrically subregular at $\bar x$ for $\bar y $ if $(\bar x, \bar y) \in \text{gph} F$ and there exists modulus $\kappa \geq 0$  along with a neighborhoods $U$ of $\bar x$ and $V$ of $\bar y$ such that
	\begin{equation}\label{eq:metricregular}
	\text{dist}(x, F^{-1}(\bar y)) \leq \kappa  \text{dist}(\bar y, F(x) \cap V ) \quad \text{for all} \ \ x \in U.
	\end{equation}
\end{definition}
\begin{definition}[Calmness \cite{DR}]A mapping $S: \mathbb{R}^m \rightrightarrows \mathbb{R}^n$ is called calm at $\bar y$ for $\bar x$ if $(\bar y, \bar x) \in \text{gph} \ S$, and there is a constant $\kappa \geq 0$ along with  neighborhoods $U$ of $\bar x$ and $V $ of  $\bar y$ such that 
	\begin{equation}\label{calmness:def}
	S(y) \cap U \subset S(\bar y) + \kappa |y-\bar y| \mathbb{B}, \quad \forall y \in V.
	\end{equation}
	In \eqref{calmness:def}, $\mathbb{B}$ denotes the closed unit ball in $\mathbb{R}^n$.
\end{definition}
%Actually, with the definition \ref{def:errorbound} and \ref{def:metricregular}, one can see that if $F$ satisfies the error bound condition \eqref{eq:error:bound} for $\bar y$ with modulus $\kappa$, then it is also metrically subregular at $\bar x$ for $\bar y$ with the same modulus for any $\bar x\in F^{-1}(\bar y)$.  

Let's now vectorize  the variables disscussed in detail. Suppose 
\begin{align*}
& \nabla_1 \in \mathbb{R}^{m\times n}: \mathbb{R}^n \rightarrow \mathbb{R}^m, \ \  \nabla_2\in \mathbb{R}^{m\times n}: \mathbb{R}^n \rightarrow \mathbb{R}^m, \ \ u=(u^1, \cdots, u^n)^{T} \in \mathbb{R}^n, \\
&p=(p_1,\cdots, p_m)^T \in \mathbb{R}^{2m}, \quad p_i = (p_i^1, p_i^2)^T \in \mathbb{R}^2, \\
& \lambda=(\lambda_1,\cdots, \lambda_m)^T \in \mathbb{R}^{2m}, \quad \lambda_i = (\lambda_i^1, \lambda_i^2)^T \in \mathbb{R}^2. 
\end{align*}
For the anisotropic case, we see
\[
\|p\|_{1} =\sum_{i=1}^m \|p_i\|_1 = \sum_{i=1}^m|p_i^1| + \sum_{i=1}^m|p_i^2|=\sum_{i=1}^m(|p_i^1| + |p_i^2|),
\]
which is a polyhedral function. For the istropic case, we notice 
\[
\|p\|_{1} = \sum_{i=1}^m |p_i| = \sum_{i=1}^m \sqrt{{p_i^1}^2 +{p_i^2}^2}
\]
which is not a polyhedral function. Fortunately, it is a group Lasso norm \cite{YY,ZZST}. 
%The $ \|p\|_{\infty} $ is defined as
%\[
% \quad  \|p\|_{\infty} = \sup_{i}\{ \|p_i\|_1 | \  i=1, \cdots, m\}.
% \]
Now, let's turn to the anisotropic case first. 
Introduce the Lagrangian function
\begin{equation}
l(u,p,\lambda) = \frac{1}{2}\langle Qu, u \rangle_{2} - \langle u, b \rangle   + \frac{1}{2}\|z\|_{2}^2 + \alpha \|p\|_{1} + \langle \nabla u-p, \lambda \rangle,
\end{equation}
where $Q$ is a linear, positive semidefinite operator and $b$ is known. For \eqref{eq:ROF}, we have $Q = H=-\mu \Delta + K^*K$ and $b=K^*z$.
It is well-known that $l$ is a convex-concave function on $(u,p,\lambda)$. Define the maximal monotone operator $T_{l}$ by 
\begin{equation}
T_{l}(u,p,\lambda) =\{(u',p',\lambda')|(u',p',-\lambda')\in \partial l(u,p,\lambda)\},
\end{equation}z
and the corresponding inverse is given by
\begin{equation}
T_{l}^{-1}(u',p',\lambda') =\{(u,p,\lambda)|(u',p',-\lambda')\in \partial l(u,p,\lambda)\}.
\end{equation}
\begin{theorem}\label{thm:metric:regular:lag}
	For the anisotropic ROF model, assuming the KKT system has at least one solution, then $T_{l}$ is metrically subregular at $(\bar u, \bar p, \bar \lambda)^T$ for the origin. 
\end{theorem}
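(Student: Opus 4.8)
The plan is to reduce the statement to the polyhedrality of the whole KKT operator and then invoke the classical calmness theorem of Robinson for polyhedral multifunctions. First I would write $T_l$ down explicitly. Differentiating $l$ blockwise and using that $Q=H$ is self-adjoint, a triple $(u',p',\lambda')$ lies in $T_l(u,p,\lambda)$ if and only if
\[
u' = Qu - b + \nabla^*\lambda, \qquad \lambda' = p - \nabla u, \qquad p' + \lambda \in \alpha\,\partial\|p\|_1 .
\]
Here $T_l^{-1}(0)$ is exactly the set of saddle points of $l$, i.e. the KKT set, which is nonempty by hypothesis, so $(\bar u,\bar p,\bar\lambda)$ indeed lies in $\text{gph}\,T_l$ over the origin, as required for metric subregularity to even be meaningful at that point.

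The central observation is that $\text{gph}\,T_l$ is a \emph{polyhedral} set, i.e. a finite union of polyhedral convex sets. The two equality blocks $u' = Qu - b + \nabla^*\lambda$ and $\lambda' = p - \nabla u$ are affine in all variables and hence cut out polyhedra; the key point is that even though $l$ carries the quadratic term $\tfrac12\langle Qu,u\rangle$, its $u$-gradient $Qu-b$ is \emph{affine}, so no nonpolyhedral behaviour is introduced. For the remaining block I would use that the anisotropic norm $\|p\|_1=\sum_{i=1}^m(|p_i^1|+|p_i^2|)$ is a polyhedral convex function, so its subdifferential is a polyhedral multifunction: for each scalar $|t|$ the graph of $\partial|t|$ splits into the three affine pieces $t<0$, $t=0$, $t>0$, and the separable sum over coordinates preserves the finite-union-of-polyhedra structure. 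Intersecting the preimage of this polyhedral graph with the two affine constraints above keeps the graph polyhedral, so $T_l$, and therefore its inverse $T_l^{-1}$ (whose graph is obtained by permuting coordinates), is a polyhedral multifunction.

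With polyhedrality established I would invoke Robinson's theorem: a polyhedral multifunction is locally upper Lipschitz continuous, hence calm, at every point of its domain with a uniform modulus. Applied to $T_l^{-1}$ at the origin, this yields a constant $\kappa\geq 0$ and a neighborhood $U$ of $(\bar u,\bar p,\bar\lambda)$ on which $T_l^{-1}(y)\cap U \subset T_l^{-1}(0) + \kappa|y|\,\mathbb B$ for $y$ near $0$, which is precisely calmness of $T_l^{-1}$ at $0$ for $(\bar u,\bar p,\bar\lambda)$. Since metric subregularity of $T_l$ at $(\bar u,\bar p,\bar\lambda)$ for the origin is equivalent to calmness of the inverse $T_l^{-1}$ at $0$ for that point, the claimed subregularity follows at once.

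The main obstacle is the careful verification in the second paragraph that the full graph is polyhedral in spite of both the quadratic term and the nonsmooth $\ell_1$ part; once the quadratic is seen to differentiate to an affine map and the anisotropic subdifferential is recognized as polyhedral, everything else is a direct appeal to Robinson's calmness theorem and the standard equivalence between metric subregularity of a mapping and calmness of its inverse. A secondary, purely technical point is confirming that the affine preimage and intersection operations used to assemble $\text{gph}\,T_l$ genuinely preserve the finite-union-of-polyhedra property, which reduces to elementary stability of polyhedral sets under affine maps.
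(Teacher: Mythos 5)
Your proposal is correct and follows essentially the same route as the paper: identify $T_l$ as a polyhedral multifunction (affine blocks plus the subdifferential of the polyhedral anisotropic $\ell_1$ norm) and invoke Robinson's result on polyhedral multifunctions to obtain calmness of the inverse, hence metric subregularity. You merely spell out two details the paper leaves implicit, namely why $\mathrm{gph}\,T_l$ is a finite union of polyhedra and the equivalence between metric subregularity of $T_l$ and calmness of $T_l^{-1}$; you also correctly include the $\nabla^*\lambda$ term in the $u$-block that the paper's displayed formula for $T_l$ omits (though its operator $\mathcal{B}$ carries it).
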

\begin{proof}
Let's consider the general case including the case in \eqref{eq:ROF}. Actually, we have 
\[
T_{l}(u,p,\lambda) = (Qu-b, -\lambda + \alpha \partial \|p\|_{1}, p-\nabla u)^{T}: = \mathcal{A}(x) + \mathcal{B}(x),
\]
where
\begin{equation}
\mathcal{A}\begin{pmatrix}
u\\p\\ \lambda
\end{pmatrix}
:=\begin{pmatrix}
0 & 0& 0\\
0 & \alpha \partial \|\cdot\|_{1} &0 \\
0 &0&0
\end{pmatrix}\begin{pmatrix}
u\\p\\ \lambda
\end{pmatrix},
\quad 
\mathcal{B}\begin{pmatrix}
u\\p\\ \lambda
\end{pmatrix} := 
 \begin{pmatrix}
Q & 0 &\nabla^* \\
0&0&-I \\
-\nabla & I &0
\end{pmatrix}
\begin{pmatrix}
u\\p\\ \lambda
\end{pmatrix}
+\begin{pmatrix}
-b \\ 0 \\0
\end{pmatrix}.
\end{equation}
It can be seen that the monotone operator $\mathcal{A}$ is polyhedral since the anisotropic $\|\cdot\|_{1}$ \eqref{eq:l1:iso:pd} is a polyhedral convex function and the operator $\mathcal{B}$ is a maximal monotone and affine operator. Thus $T_{l}$ is a polyhedral mapping \cite{Rob}. By the corollary in \cite{Rob}, we see $T_{l}$ is metrically subregular at $(\bar u, \bar p, \bar \lambda)^{T}$ for the origin.
\end{proof}
Let's now turn to the metric subregularity of $\partial d$ for the dual problem \eqref{eq:dual:rof}, supposing $(\partial d)^{-1}(0) \neq \emptyset$ and there exists $\bar \lambda$ such that $0 \in (\partial d)(\bar \lambda)$,
\begin{equation}\label{eq:subgradient:dual}
(\partial d)(\lambda) = \Div^*H^{-1}(\Div \lambda + f) + \partial g(\lambda), \quad g(\lambda): = \mI_{\{\norm[\infty]{\lambda} \leq \alpha\}}(\lambda).
\end{equation}

For the anisotropic case, actually, the constraint set is a polyhedral convex set in $\mathbb{R}^{2m}$, since
\[
g(\lambda) = 0 \Leftrightarrow \left\{\lambda=(\lambda_1,\lambda_2, \cdots, \lambda_m)^{T} \ | \ \lambda_i \in \mathbb{R}^{2}, \  |\lambda_i^k|  \leq \alpha, \ i =1, 2, \cdots, m, \ k=1,2\right\}.
\]
Together with $\Div^*H^{-1}(\Div \lambda + f) $ being an affine and monotone mapping, $\partial g$ is a polyhedral mapping by \cite{Rob}.
This leads to that $\partial g$ is metrically subregular at $\bar \lambda$ for the origin with similar argument as in Theorem \ref{thm:metric:regular:lag}.  

Now we turn to the isotropic case. The metric subregularity of $\partial d$ is more subtle, since the constraint set
\[
g(\lambda) = 0 \Leftrightarrow \left\{\lambda=(\lambda_1, \cdots, \lambda_m)^{T} \ | \ \lambda_i \in \mathbb{R}^{2}, \ |\lambda_i|= \sqrt{ (\lambda_i^1)^2 + (\lambda_i^2)^2} \leq \alpha, \ i =1,\cdots, m\right\}
\]
%\[
%\mI_{\{\norm[\infty]{\lambda} \leq \alpha\}}(\lambda) = 0 \Longleftrightarrow \{\lambda =(\lambda_1,\lambda_2)^T\in \mathbb{R}^{2m} \ | \ \sqrt{{\lambda_1^i}^2 + {\lambda_2^i}^2} \leq \alpha, \quad i =1, 2, \cdots, m\} 
%\]
is not a polyhedral set. Now introduce
\[
g_i(\lambda_i) = \mI_{\{ |\lambda_i|\leq \alpha\}}(\lambda_i), \quad i=1,2,\cdots, m.
\]
Henceforth, let's denote $\mathbb{B}_{a}(\bar \lambda_i)$ or $\mathbb{B}^k_{a}(\bar \lambda_i)$ as the Euclidean closed ball centered at $\bar \lambda_i \in \mathbb{R}^2$ with radius $a$, i.e., 
\begin{equation}\label{eq:l2:ball}
\mathbb{B}_{\alpha}^i(0) :=  \left\{\lambda_{i}: = (\lambda_{i}^1,\lambda_{i}^2)^{T} \in \mathbb{R}^2 \ | \ |\lambda_{i}| = \sqrt{(\lambda_{i}^1)^2 + (\lambda_{i}^2)^2} \leq \alpha\right\}, \ \  i = 1,\cdots, m, \ \alpha >0.
\end{equation}
Furthermore, denote $ \mathbb{B}_{a}( \lambda) = \Pi_{i=1}^m \mathbb{B}_{a}(\bar  \lambda_i)$ with $\bar \lambda = (\bar \lambda_1, \cdots, \bar \lambda_m)^T$. We can thus write
\[
\partial g = \Pi_{i=1}^m\partial g_{i} = \Pi_{i=1}^m\partial  \mI_{\mathbb{B}_{\alpha}^i(0)}(\lambda_i).
\]
It is known that each $\partial  \mI_{\mathbb{B}_{\alpha}^i(0)}(\lambda_i)$ is metrically subregular at $(\bar \lambda_i, \bar v_i) \in \text{gph} \partial  \mI_{\mathbb{B}_{\alpha}^i(0)} $ \cite{YY} (see Lemma 6 therein). For the metric subregularity of $\partial g$, we have the following lemma. 
\begin{lemma}\label{lem:metric:subregular:g}
	For any $(\bar \lambda, \bar v)^T \in \emph{gph} \ \partial g$, $\partial g$ is metrically  subregular at $\bar \lambda$ for $\bar v$.
\end{lemma}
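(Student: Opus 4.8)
The plan is to exploit the separable product structure $\partial g = \Pi_{i=1}^m \partial g_i = \Pi_{i=1}^m \partial \mI_{\mathbb{B}_{\alpha}^i(0)}$ already recorded above, and to reduce the claim to the elementary fact that a Cartesian product of metrically subregular set-valued mappings, taken over Euclidean factors, is again metrically subregular. Since each factor $\partial \mI_{\mathbb{B}_{\alpha}^i(0)}$ is metrically subregular at the corresponding $(\bar\lambda_i, \bar v_i)$ by \cite{YY} (Lemma 6), the entire argument becomes a matter of assembling the per-factor moduli and neighborhoods into product data.

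First I would fix $(\bar\lambda, \bar v) \in \text{gph}\,\partial g$ and write $\bar\lambda = (\bar\lambda_1, \ldots, \bar\lambda_m)$, $\bar v = (\bar v_1, \ldots, \bar v_m)$, so that $(\bar\lambda_i, \bar v_i) \in \text{gph}\,\partial g_i$ for each $i$. For each $i$, metric subregularity of $\partial g_i$ supplies a modulus $\kappa_i \geq 0$ together with neighborhoods $U_i$ of $\bar\lambda_i$ and $V_i$ of $\bar v_i$ such that $\text{dist}(\lambda_i, (\partial g_i)^{-1}(\bar v_i)) \leq \kappa_i\,\text{dist}(\bar v_i, \partial g_i(\lambda_i)\cap V_i)$ whenever $\lambda_i \in U_i$. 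I would then set $U := \Pi_{i=1}^m U_i$, $V := \Pi_{i=1}^m V_i$, which are neighborhoods of $\bar\lambda$ and $\bar v$ respectively in the Euclidean topology of $\mathbb{R}^{2m}$, and take $\kappa := \max_{1\le i\le m}\kappa_i$.

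The key computation rests on two product identities that hold precisely because the ambient norm is Euclidean and every object splits coordinatewise. Since $(\partial g)^{-1}(\bar v) = \Pi_i (\partial g_i)^{-1}(\bar v_i)$, one has $\text{dist}(\lambda, (\partial g)^{-1}(\bar v))^2 = \sum_i \text{dist}(\lambda_i, (\partial g_i)^{-1}(\bar v_i))^2$; and since $V$ is itself a product, $\partial g(\lambda)\cap V = \Pi_i(\partial g_i(\lambda_i)\cap V_i)$, whence $\text{dist}(\bar v, \partial g(\lambda)\cap V)^2 = \sum_i \text{dist}(\bar v_i, \partial g_i(\lambda_i)\cap V_i)^2$. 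For $\lambda\in U$ one automatically has $\lambda_i\in U_i$ for every $i$, so squaring the per-factor estimates and summing gives
\[
\text{dist}(\lambda, (\partial g)^{-1}(\bar v))^2 \le \sum_{i=1}^m \kappa_i^2\,\text{dist}(\bar v_i, \partial g_i(\lambda_i)\cap V_i)^2 \le \kappa^2\,\text{dist}(\bar v, \partial g(\lambda)\cap V)^2,
\]
and taking square roots yields exactly the inequality \eqref{eq:metricregular} with modulus $\kappa$, proving metric subregularity at $\bar\lambda$ for $\bar v$.

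I do not anticipate a genuine obstacle; the only points deserving care are the two product identities, one for the distance to a product set and one for the inverse image. These hinge on the separability of $g$ and on the Euclidean (as opposed to, say, $\ell^\infty$) structure of the norm, which is what makes the distance decompose as a sum of squares rather than a maximum. I would also invoke the convention $\text{dist}(\cdot,\emptyset)=+\infty$, so that any coordinate with $\partial g_i(\lambda_i)\cap V_i=\emptyset$ merely renders the associated bound vacuously valid and causes no difficulty.
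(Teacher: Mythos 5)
Your proof is correct and follows essentially the same route as the paper's: decompose $\partial g = \Pi_{i=1}^m \partial g_i$, invoke the per-factor metric subregularity of $\partial \mI_{\mathbb{B}_{\alpha}^i(0)}$ from Lemma~6 of the cited reference, and combine the moduli through the Euclidean sum-of-squares splitting of the distances with $\kappa = \max_i \kappa_i$. Your write-up is in fact slightly more careful than the paper's, since you make the product neighborhoods $U$, $V$, the identity $(\partial g)^{-1}(\bar v) = \Pi_i (\partial g_i)^{-1}(\bar v_i)$, and the convention $\mathrm{dist}(\cdot,\emptyset)=+\infty$ explicit.
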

\begin{proof}
	For any $(\bar \lambda, \bar v)^T \in \text{gph} \ \partial g$, and $V$ of a neignborhoods of $\bar \lambda$, since
	\begin{align*}
	&\text{dist}^2(\lambda, (\partial g)^{-1}(\bar v) ) = \sum_{i=1}^m 	\text{dist}^2(\lambda_i, (\partial g_i)^{-1}(\bar v_i) ) \\
	& \leq \sum_{i=1}^m \kappa_i^2 \text{dist}^2(\bar v_i, (\partial g_i)(\bar \lambda_i))	
	\leq \sum_{i=1}^m \max(\kappa_i^2, i=1,\cdots, m) \text{dist}^2(\bar v_i, (\partial g_i)(\bar \lambda_i))	\\
	&  = \max(\kappa_i^2, i=1,\cdots, m) \text{dist}^2(\bar v, (\partial g)(\bar \lambda)).
	\end{align*}
	Thus with choice $\kappa = \sqrt{ \max_{i=1}^m(\kappa_i^2, i=1,\cdots, m) }$, we found that  $\partial g$ is metrically  subregular at $\bar \lambda$ for $\bar v$ with modulus $\kappa$.
\end{proof}
%Now, for the general structure convex optimization problem
%\[
%\min_{u \in X}\frac{\langle Qu, u \rangle }{2} -\langle b, u \rangle + \alpha \|\nabla u\|_{1}, 
%\]
%where $Q$ is a positive definite operator and the primall-dual form is
%\begin{equation}\label{eq:pd:general:optimality:str}
%\min_{u \in X} \max_{\lambda \in Y}\frac{\langle Qu, u \rangle }{2} -\langle b, u \rangle + \langle \nabla u, \lambda \rangle -    \mI_{\{\norm[\infty]{\lambda} \leq \alpha\}}(\lambda),
%\end{equation}
%and the dual form is
%\[
%\min_{\lambda \in Y} \frac{\langle \tilde Q \lambda, \lambda \rangle }{2} -\langle \tilde b, \lambda \rangle +   \mI_{\{\norm[\infty]{\lambda} \leq \alpha\}}(\lambda).
%\]
%By the optimality conditions of \eqref{eq:pd:general:optimality:str}, we have
%\[
%\tilde Q\bar u = \Div \bar\lambda + b, \nabla \bar u \in   \mI_{\{\norm[\infty]{\lambda} \leq \alpha\}}(\bar \lambda).
%\]
By \cite{HS} (Theorem 2.1) (or formula (3.4) of \cite{KK1}), the solution $\bar u$ of the primal problem \eqref{eq:ROF} and the solution $\bar \lambda$ of the dual problem \eqref{eq:dual:rof} have the following relations
\begin{align}
H\bar u - \Div \bar \lambda &= f, \label{eq:hk:resi:eq0:u}\\
-\alpha \nabla \bar u + |\nabla \bar u|\bar \lambda &= 0, \quad |\bar \lambda| = \alpha, \label{eq:hk:resi:neq0} \\
 \nabla \bar u& = 0, \quad |\bar \lambda| < \alpha, \label{eq:hk:resi:eq0}
\end{align}
which can be derived from the optimality conditions \eqref{eq:opti:primaldual}. Now we turn to a more general model compared to \eqref{eq:dual:rof}.
Suppose $A \in \mathbb{R}^{m\times 2m}: \mathbb{R}^{2m} \rightarrow \mathbb{R}^m$, 
%$\lambda = (\lambda_1,\lambda_2, \cdots, \lambda_{2m})^{T}$ and $J$ is a partition of the index set $(1, 2, \cdots, 2m)$,
\begin{equation}\label{eq:dual:ROF:general}
f(\lambda) = f_1(\lambda) + \Pi_{i=1}^m\mI_{\mathbb{B}_{\alpha}^i(0)}(\lambda_{i}), \quad  f_1(\lambda):=\frac{\|  A \lambda -b\|^2 }{2} +  \langle q, \lambda \rangle + a_0
,\quad b\in \mathbb{R}^m,
\end{equation}
where $\mI_{\mathbb{B}_{\alpha}^i(0)}(x)$ is the indicator function for the  $l_2$ ball in \eqref{eq:l2:ball} and $a_0$ is a constant.
We claim that the model \eqref{eq:dual:ROF:general} also covers the dual problem \eqref{eq:dual:rof} by setting $A = H^{-\frac{1}{2}}\Div$, $b= -H^{-\frac{1}{2}}f$ and $a_0 = \frac{1}{2}\|z\|_{2}^2$. For the existence of the square root $H^{1/2}$ of the positive definite linear operator $H$, we refer to \cite{RS} (Theorem VI.9).
Supposing $g(\lambda) := \Pi_{i=1}^m\mI_{\mathbb{B}_{\alpha}^i(0)}(\lambda_{i})$, let's introduce
\begin{align*}
&\mathcal{X}: = \{ \lambda \ | \ A\lambda=\bar y, \quad -\bar g \in \partial g(\lambda)\}, \\
&\Gamma_1(p^1) = \{ \lambda \ | \ A \lambda - \bar y = p^1  \}, \quad 
\Gamma_2(p^2) = \{ \lambda \ | \ p^2 \in \bar g + \partial g(\lambda) \}, \\
&\hat \Gamma(p^1) =  \Gamma_1(p^1)\cap\Gamma_2(0) = \{ \lambda\ | \  p^1 = A\lambda - \bar y, \ 0 \in \bar g + \partial g(\lambda) \},
\end{align*}
where $\mathcal{X}$ is actually the solution set of \eqref{eq:dual:ROF:general}, since
\[
\bar g := A^T \nabla h(\bar y) +q=(\bar g_1, \bar g_2, \cdots, \bar g_m)^{T},\  h(y) = \|y-b\|^2/2, \quad \bar g_i \in \mathbb{R}^2.%, \quad \bar g_i \in \mN_{\mathbb{B}_{\alpha}^i(0)}(\bar \lambda_i)
\]
We also need another two set valued mapping,
\begin{align}
&\Gamma(p^1, p^2):=\{\lambda \ |\ p^1 = A \lambda -\bar y, \quad p^2 \in \bar g + \partial g(\lambda) \},\\
&S(p):=\{\lambda \  | \ p \in \nabla f_1(\lambda)+ \partial g(\lambda) \} \Rightarrow \mathcal{X}=S(0).
\end{align}
Actually the metric subregularity of $\partial f$ at $(\bar \lambda, 0)$ is equivalent to the calmness $S$ at $(0, \bar \lambda)$ \cite{DR}. Now we turn to the calmness of $S$. By \cite{YY} (Theorem 5), the calmness of $S$ at $(0, \bar \lambda)$ is equivalent to the calmness of $\hat \Gamma$ at $(0, \bar \lambda)$ for any $\bar \lambda \in S(0)$ if $\partial g$ is metrically subregular for $f$ in \eqref{eq:dual:ROF:general}. We would use the following calm intersection theorem to prove the calmness of $\Gamma$. 
\begin{proposition}[Calm intersection theorem \cite{KK1, KKU, YY}]\label{prop:calm:inter}
	Let $T_1: \mathbb{R}^{q_1} \rightrightarrows \mathbb{R}^n$, $T_2: \mathbb{R}^{q_2} \rightrightarrows \mathbb{R}^n$ be two set-valued maps. Define set-valued maps
	\begin{align}
	T(p^1, p^2): &= T_1(p^1)\cap T_2(p^2), \\
	\hat T(p^1):&= T_1(p^1)\cap T_2(0).
	\end{align}
	Let $\tilde x \in T(0,0)$. Suppose  both the set-valued maps $T_1$ and $T_2$ are calm at $(0, \tilde x)$ and $T_1^{-1}$ is pseudo-Lipschitiz at $(0,\tilde x)$. Then $T$ is calm at $(0,0, \tilde x)$ if and only if $\hat T$ is calm at $(0, \tilde x)$.
\end{proposition}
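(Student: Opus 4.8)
The plan is to establish the two implications separately, noting that the forward direction ($T$ calm at $(0,0,\tilde x)$ $\Rightarrow$ $\hat T$ calm at $(0,\tilde x)$) is the easy one and follows from the observation that $\hat T$ is simply the restriction $\hat T(p^1) = T(p^1,0)$. Indeed, if $T$ is calm with modulus $\kappa$ and associated neighborhoods, then for $p^1$ small and $x \in \hat T(p^1)$ near $\tilde x$ we have $x \in T(p^1,0)$, so $\text{dist}(x, T(0,0)) \leq \kappa|(p^1,0)| = \kappa|p^1|$; since $T(0,0) = \hat T(0)$ this is precisely calmness of $\hat T$ at $(0,\tilde x)$. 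The hypotheses on $T_1,T_2$ and $T_1^{-1}$ are not needed for this direction.

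For the converse I would use all three hypotheses in a chaining argument. Suppose $\hat T$ is calm at $(0,\tilde x)$ with modulus $\hat\kappa$, $T_2$ is calm at $(0,\tilde x)$ with modulus $\kappa_2$, and $T_1^{-1}$ has the pseudo-Lipschitz (Aubin) property near the base point, with modulus $L$. Fix $x \in T(p^1,p^2) = T_1(p^1) \cap T_2(p^2)$ with $x$ near $\tilde x$ and $(p^1,p^2)$ near $(0,0)$; the goal is $\text{dist}(x, T(0,0)) \leq \kappa(|p^1|+|p^2|)$. The three steps are: (i) from $x \in T_2(p^2)$ and calmness of $T_2$, obtain $x' \in T_2(0)$ with $|x-x'| \leq \kappa_2|p^2|$; (ii) from $x \in T_1(p^1)$, i.e.\ $p^1 \in T_1^{-1}(x)$, and the Aubin property of $T_1^{-1}$ applied at the nearby point $x'$, obtain $\tilde p^1 \in T_1^{-1}(x')$ (equivalently $x' \in T_1(\tilde p^1)$) with $|\tilde p^1 - p^1| \leq L|x-x'| \leq L\kappa_2|p^2|$; (iii) since now $x' \in T_1(\tilde p^1) \cap T_2(0) = \hat T(\tilde p^1)$, calmness of $\hat T$ yields $x'' \in \hat T(0) = T(0,0)$ with $|x'-x''| \leq \hat\kappa|\tilde p^1|$. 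Using $|\tilde p^1| \leq |p^1| + L\kappa_2|p^2|$ and the triangle inequality gives $\text{dist}(x, T(0,0)) \leq |x-x'| + |x'-x''| \leq \kappa_2|p^2| + \hat\kappa(|p^1| + L\kappa_2|p^2|)$, which is bounded by $\kappa(|p^1|+|p^2|)$ with $\kappa := \hat\kappa + \kappa_2(1 + \hat\kappa L)$, the required calmness of $T$ at $(0,0,\tilde x)$.

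The hard part will be the neighborhood bookkeeping that legitimizes the three successive applications, rather than the final estimate. One must shrink the initial neighborhoods of $\tilde x$ and of $(0,0)$ so that the intermediate point $x'$ stays inside the neighborhood of $\tilde x$ on which $\hat T$ is calm (note $|x'-\tilde x| \leq \kappa_2|p^2| + |x-\tilde x|$), so that $\tilde p^1$ lands in the neighborhood of $0$ required there (using $|\tilde p^1| \leq |p^1| + L\kappa_2|p^2|$), and so that $x,x'$ and $p^1,\tilde p^1$ all remain in the ranges where the Aubin property of $T_1^{-1}$ is valid. The structural role of the pseudo-Lipschitz hypothesis is exactly to make this chaining possible: after projecting $x$ onto $T_2(0)$ in step (i) there is a priori no reason for $x'$ to admit any nearby preimage under $T_1$, and the Aubin property of $T_1^{-1}$ is precisely what guarantees a controlled correction $\tilde p^1$ of the first coordinate so that $x'$ re-enters the graph of $T_1$ and hence the slice $\hat T(\tilde p^1)$, where the calmness of $\hat T$ can finally be invoked.
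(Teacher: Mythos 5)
The paper does not prove this proposition at all: it is quoted verbatim as a known result from the cited references (Klatte--Kummer and Ye et al.), so there is no in-paper proof to compare against. Your argument is nevertheless correct and is essentially the standard proof of the calm intersection theorem found in those sources: the forward implication is the trivial restriction $\hat T(p^1)=T(p^1,0)$, and the converse is the three-step chain (calmness of $T_2$ to move $x$ into $T_2(0)$, the Aubin property of $T_1^{-1}$ to produce a controlled perturbation $\tilde p^1$ with $x'\in T_1(\tilde p^1)$, then calmness of $\hat T$ to land in $T(0,0)$), with the neighborhood shrinking you describe being exactly the bookkeeping the cited proofs carry out. One small observation: your proof never invokes the calmness of $T_1$, which appears in the hypotheses as stated; this is not an error on your part --- that hypothesis is not needed for either implication in your (and the standard) argument --- but it is worth being explicit that it plays no role, rather than leaving the reader to wonder where it was used.
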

We need the following assumption first, which is actually a mild condition on the solution set $\mathcal{X}$ of the dual problem \eqref{eq:dual:ROF:general} by \eqref{eq:hk:resi:neq0}  and \eqref{eq:hk:resi:eq0}. 
\begin{assumption}\label{asump:existence}
With the normal cone $\mN_{\mathbb{B}_{\alpha}^i(0)}(\bar \lambda_i) = \partial  \mI_{\mathbb{B}_{\alpha}^i(0)}(\bar \lambda_i)$, let's assume that $\bar  \lambda \in \mathcal{X}$ and
\begin{itemize}
	\item[\emph{i.}] Either $\bar \lambda_{i}  \in \emph{bd} \mathbb{B}_{\alpha}^i(0)$ and there exists $\bar g_i \neq 0  $ such that  $-\bar g_i   \in \mN_{\mathbb{B}_{\alpha}^i(0)}(\bar \lambda_{i})$,
	\item[\emph{ii.}]  Either $\bar \lambda_{i}  \in \emph{int} \mathbb{B}_{\alpha}^i(0)$.
\end{itemize}	
\end{assumption}
%Our proof of the metric subregularity of $\partial f$ is based on the following Calm intersection theorem \cite{KK1, KKU, YY}.

\begin{theorem}\label{thm:metric:regular:dual:iso}
For the dual problem \eqref{eq:dual:ROF:general},  supposing it has at least one solution $\bar \lambda$ satisfying the Assumption \ref{asump:existence}, then $\partial f$ is metrically subregular at $\bar \lambda$ for the origin. 
\end{theorem}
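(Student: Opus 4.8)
The plan is to obtain the metric subregularity of $\partial f$ by riding the chain of equivalences already assembled before the statement, so that the only genuine work is the calmness of the fully perturbed map $\Gamma$. First I would recall from \cite{DR} that metric subregularity of $\partial f$ at $\bar\lambda$ for the origin is the same as calmness of the solution map $S$ at $(0,\bar\lambda)$. Since $-\bar g \in \partial g(\bar\lambda)$ for $\bar\lambda \in \mathcal{X}$, Lemma \ref{lem:metric:subregular:g} gives metric subregularity of $\partial g$ at $(\bar\lambda,-\bar g)$, and Theorem 5 of \cite{YY} then reduces the calmness of $S$ at $(0,\bar\lambda)$ to the calmness of $\hat\Gamma$ at $(0,\bar\lambda)$. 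Thus it suffices to prove that $\hat\Gamma$ is calm.

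To reach $\hat\Gamma$ I would invoke the calm intersection theorem (Proposition \ref{prop:calm:inter}) with $T_1=\Gamma_1$ and $T_2=\Gamma_2$, verifying its three hypotheses: (i) $\Gamma_1(p^1)=\{\lambda : A\lambda-\bar y=p^1\}$ is a polyhedral multifunction, its graph being the solution set of a linear system in $(\lambda,p^1)$, hence calm by Robinson's theorem \cite{Rob}; (ii) $\Gamma_1^{-1}(\lambda)=\{A\lambda-\bar y\}$ is a single-valued affine, globally Lipschitz map, hence pseudo-Lipschitz at $(0,\bar\lambda)$; and (iii) $\Gamma_2(p^2)=(\partial g)^{-1}(p^2-\bar g)$ is calm at $(0,\bar\lambda)$, this being exactly the calmness of $(\partial g)^{-1}$ at $-\bar g$ for $\bar\lambda$, i.e. the metric subregularity of $\partial g$ from Lemma \ref{lem:metric:subregular:g}. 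The theorem then yields the equivalence that $\Gamma$ is calm at $(0,0,\bar\lambda)$ if and only if $\hat\Gamma$ is calm at $(0,\bar\lambda)$, reducing everything to the calmness of $\Gamma$.

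The heart of the argument, and the step I expect to be the main obstacle, is the calmness of $\Gamma$, equivalently the error bound $\mathrm{dist}(\lambda,\mathcal{X}) \leq \kappa\bigl(|A\lambda-\bar y| + \mathrm{dist}(-\bar g,\partial g(\lambda))\bigr)$ for $\lambda$ near $\bar\lambda$. Here the non-polyhedral nature of the isotropic feasible set (a product of Euclidean balls) is the difficulty, and this is precisely where Assumption \ref{asump:existence} enters. I would split the indices $i=1,\dots,m$ into the inactive block, where $\bar\lambda_i\in\mathrm{int}\,\mathbb{B}^i_\alpha(0)$ (so $\bar g_i=0$ and, for $\lambda$ close to $\bar\lambda$, the coordinate $\lambda_i$ stays interior and the inclusion $p^2_i-\bar g_i\in\partial g_i(\lambda_i)$ holds trivially), and the active block, where $\bar\lambda_i\in\mathrm{bd}\,\mathbb{B}^i_\alpha(0)$ with a nonzero multiplier $\bar g_i\neq 0$. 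On the active block the nonzero normal is a strict-complementarity condition that pins $\lambda_i$ to the smooth sphere and keeps the outward normal direction nondegenerate; linearizing the sphere at $\bar\lambda_i$ replaces each curved constraint by its tangent hyperplane, so that locally the coupled system $A\lambda-\bar y=p^1$ together with the reduced, now affine, normal-cone conditions becomes polyhedral. Applying Robinson's polyhedral calmness to the linearized system and bounding the linearization remainder by the curvature of the sphere, I would recover the error bound with a uniform modulus $\kappa$.

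I anticipate two delicate points. The first is justifying that the tangent-hyperplane reduction on the active block preserves calmness, i.e. showing that the sphere's curvature contributes only higher-order terms absorbed into the residual $|A\lambda-\bar y|+\mathrm{dist}(-\bar g,\partial g(\lambda))$. The second is confirming that the degenerate configuration $\bar\lambda_i\in\mathrm{bd}\,\mathbb{B}^i_\alpha(0)$ with $\bar g_i=0$, which would defeat calmness because the curvature is no longer dominated by a fixed normal direction, is exactly the case excluded by Assumption \ref{asump:existence}. Once $\Gamma$ is shown calm, tracing the equivalences backward gives the calmness of $\hat\Gamma$, then of $S$, and finally the metric subregularity of $\partial f$ at $\bar\lambda$ for the origin.
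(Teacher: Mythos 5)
Your reduction scaffolding matches the paper's: metric subregularity of $\partial f$ at $\bar\lambda$ for the origin is the calmness of $S$ at $(0,\bar\lambda)$, which by Theorem 5 of \cite{YY} (using Lemma \ref{lem:metric:subregular:g}) is equivalent to the calmness of $\hat\Gamma$ at $(0,\bar\lambda)$. But from there you take a detour that both complicates and weakens the argument. You aim to prove calmness of the fully perturbed map $\Gamma(p^1,p^2)$ and then pass to $\hat\Gamma$ via the calm intersection theorem; yet the implication ``$\Gamma$ calm $\Rightarrow$ $\hat\Gamma$ calm'' is immediate (restrict to $p^2=0$) and needs no theorem, while calmness of $\Gamma$ is a strictly stronger statement: you must control how the solution set of $p^2_i\in\bar g_i+\mN_{\mathbb{B}_{\alpha}^i(0)}(\lambda_i)$ moves with $p^2_i$, which is exactly the curvature issue you flag and never resolve. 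The paper only ever needs $\hat\Gamma$ calm and proves it directly.

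The genuine gap is in the core error bound, where your linearize-the-sphere-and-bound-the-remainder plan misses the much simpler mechanism the paper exploits: under Assumption \ref{asump:existence}, on the active block the \emph{unperturbed} inclusion $-\bar g_i\in\mN_{\mathbb{B}_{\alpha}^i(0)}(\lambda_i)$ with $\bar g_i\neq 0$ pins $\lambda_i$ exactly to the single point $\bar\lambda_i=-\alpha\bar g_i/|\bar g_i|$, so $\Gamma_2^i(0)=\{\bar\lambda_i\}$ is a singleton --- no tangent-plane approximation and no higher-order remainder ever enter. Consequently $\hat\Gamma(p^1)$ near $\bar\lambda$ is contained in the polyhedral set $M(p^1)=\{\lambda:\ A\lambda-\bar y=p^1,\ \lambda_i=\bar\lambda_i \text{ for active } i\}$, and Hoffman's error bound applied to $M$ (plus the check that the projection of $\lambda$ onto $M(0)$ stays inside the balls on the inactive block, hence lies in $\hat\Gamma(0)$) gives $\text{dist}(\lambda,\hat\Gamma(0))\leq\kappa\|p^1\|$ outright. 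Your two ``delicate points'' are precisely the unproved steps of your route; the second (excluding $\bar\lambda_i\in\text{bd}\,\mathbb{B}_{\alpha}^i(0)$ with $\bar g_i=0$) is indeed what Assumption \ref{asump:existence} buys, but the first (the curvature remainder) never needs to be confronted if one uses the exact-pinning argument. A further sign that the $p^2$-perturbation was not thought through: on the inactive block the inclusion $p^2_i\in\mN_{\mathbb{B}_{\alpha}^i(0)}(\lambda_i)$ with $\lambda_i$ interior forces $p^2_i=0$ rather than ``holding trivially''; this happens to be harmless for calmness (the intersection with a small neighborhood is empty for $p^2_i\neq 0$), but it is not the reason you give.
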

\begin{proof}
	We mainly need to prove the calmness of $\hat\Gamma(p^1)$ at $(0,\bar \lambda)$. By metric subregularity of $\partial g$ by Lemma \ref{lem:metric:subregular:g} and the fact that $\Gamma_1^{-1}$ is pseudo-Lipschitiz and bounded metrically subregular at $(0,\bar \lambda)$ (\cite{YY}, Lemma 3), with the Calm intersection theorem as in Proposition \ref{prop:calm:inter},  we get calmness of $\Gamma$ at $(0,0, \bar \lambda)$. We thus get the calmness of $S$ at $(0, \bar \lambda)$ and the metric subregular of $\partial f$ at $\bar \lambda$ for the origin.
	
	 Now let's focus the the calmness of $\hat\Gamma(p^1)$ at $(0,\bar \lambda)$.
 Without loss of generality, suppose
\begin{align*}
& \bar \lambda_{i} \in \text{int}  \mathbb{B}_{\alpha}^i(0), \quad i = 1, \cdots, L; \\
& \bar \lambda_{i} \in \text{bd}  \mathbb{B}_{\alpha}^i(0), \quad  -\bar g_i  \neq 0 \in \mN_{\mathbb{B}_{\alpha}^i(0)}(\bar \lambda_i), \quad i = L+1, \cdots, m, \quad 1<L<m.
\end{align*}
The existence of $\bar g_i \neq 0$ is guaranteed by the Assumption \ref{asump:existence}.
For $i=1,\cdots, L$, $\bar \lambda_{i} \in \text{int} \mathbb{B}_{\alpha}^i(0)$, we have $\bar g_i \in \mN_{\mathbb{B}_{\alpha}^i(0)}(\bar \lambda_{i}) = \{0\}$. We thus conclude $\bar g_i =  0$ and
\begin{equation}\label{eq:gamma20:1}
\Gamma_2^i(0) = \{\lambda_i | 0 \in\mN_{\mathbb{B}_{\alpha}^i(0)}(\lambda_i) \} =  \mathbb{ B}_{\alpha}^i(0), \quad i = 1, \cdots, L.
\end{equation}
Now, let's turn to $\Gamma_2^i(0)$ for  $i=L+1,\cdots, m$. While $\bar \lambda_{i} \in \text{bd}\mathbb{ B}_{\alpha}^i(0)$, by the Assumption \ref{asump:existence} on the solution  $\bar \lambda$, there exists $\bar g_i \neq 0$ such that $-\bar g_i   \in \mN_{\mathbb{B}_{\alpha}^i(0)}(\bar \lambda_{i})$. 
Acutally,  we know that the normal cone at the boundary point of a closed ball is
\[ \quad\mN_{\mathbb{B}_{\alpha}^i(0)}( \lambda_i) = \{s \lambda_i|s\geq 0\}, \quad \forall \lambda_i \in \text{bd}\mathbb{ B}_{\alpha}^i(0).
\]
Still with the definition of $\Gamma_2^i(0)$, we have
\[
\Gamma_2^i(0) = \{\lambda_i | -\bar g_i \in\mN_{\mathbb{B}_{\alpha}^i(0)}(\lambda_i) \}. 
\]
%While $\lambda_i \in \mathbb{B}_{\epsilon}(\bar \lambda_{i}) \cap \mathbb{ B}_{\alpha}^i(0)$, we notice either $\lambda_i \in \text{bd}\mathbb{ B}_{\alpha}^i(0)$ or  $\lambda_i \in \text{int}\mathbb{ B}_{\alpha}^i(0)$.
While  $\lambda_i \in \text{int}\mathbb{ B}_{\alpha}^i(0)$, by the definition of $\Gamma_2(0)$, together with $\mN_{\mathbb{B}_{\alpha}^i(0)}(\lambda_i) = \{0\}$,  we see
$\bar g_i = 0$, which is contracted with the assumption \ref{asump:existence}. While  $\lambda_i \in \text{bd}\mathbb{ B}_{\alpha}^i(0)$, 
together with the definition of $\Gamma_2$,  we see $\lambda_i = -\bar g_i \frac{\alpha}{|\bar g_i|}$ with some $\frac{\alpha}{|\bar g_i|}>0$. Since $\bar g_i$ is fixed as in definition of  $\Gamma_2^i$ for $i=L+1, \cdots, m$ and $-\bar g_i \in \mN_{\mathbb{B}_{\alpha}^i(0)}(\bar\lambda_i)$ and $\bar \lambda_i \in  \text{bd}\mathbb{ B}_{\alpha}^i(0)$, we have $\lambda_i =\bar \lambda_i=-\bar g_i \frac{\alpha}{|\bar g_i|}$ and 
the only choice is 
\begin{equation}\label{eq:gamma20:2}
\Gamma_2^i(0) = \{ \bar  \lambda_{i}\} , \quad i = L+1, \cdots, m.
\end{equation}
Choose $\epsilon >0$ small enough such that $\mathbb{B}_{4\epsilon}^i(\bar \lambda_{i}) \subset \mathbb{B}_{\alpha}(0)$, $i=1,\cdots, L$. We thus conclude that
\begin{equation}\label{eq:cons:normacone}
\Gamma_2(0) \cap \mathbb{B}_{\epsilon}(\bar \lambda) = (\mathbb{B}_{\epsilon}^1(\bar \lambda_{1}), \cdots, \mathbb{B}_{\epsilon}^L(\bar \lambda_{L}), \bar  \lambda_{{L+1}}, \cdots, \bar  \lambda_{m})^{T}.
\end{equation}
 Suppose $p=(p_1, p_2, \cdots, p_m)^{T}$ and $ \lambda \in \Gamma_1(p) \cap \Gamma_2(0) \cap \mathbb{B}_{\epsilon}(\bar \lambda)$,  $p_i \in \mathbb{R}^2 $, $i=1, 2, \cdots, m$.
 Introduce the following constraint on $\lambda$ 
 \[
 \mathcal{R}: = \{ \lambda \ | \ \lambda_{i} \in \mathbb{R}^2, \ i=1, \cdots, L,  \   \lambda_{i} = \bar \lambda_{i}, \ i=L+1, \cdots, m   \},
 \]
 which is a convex and closed polyhedral set. It can be seen as follows. Denote $\bar L  = m-L$ and  $0_{2L\times2m} \in \mathbb{R}^{2L\times2m}$, $0_{2 \bar L\times2L} \in \mathbb{R}^{2 \bar L\times2L}$ as the zero matrix with elements are all zero. Denote $I_{2\bar L\times 2\bar L} \in \mathbb{R}^{2 \bar L\times2 \bar L}$ as the identity matrix. Introduce 
 \[
 E_{+} = [0_{2L\times2m};
 0_{2 \bar L\times2L}    \ I_{2 \bar L\times 2 \bar L}] \in \mathbb{R}^{2m \times 2m}, \quad 
  E_{-} = [0_{2L\times2m};
 0_{2 \bar L\times2L}    \ -I_{2 \bar L\times 2 \bar L}] \in \mathbb{R}^{2m \times 2m},
 \]
 \[
 E = [E_{+}; E_{-}] \in \mathbb{R}^{4m \times 2m}, \quad \bar \lambda_0 = [0, \cdots, 0, \bar \lambda_{L+1}, \cdots, \bar \lambda_{m}] \in \mathbb{R}^{2m}.
 \]
The set $\mathcal{R} = \{ \lambda \ | \ E \lambda \leq E \bar \lambda_0 \}$ is thus a polyhedral set. Actually, the following set 
 \begin{equation}\label{eq:matrix:equality}
M(p): =  \{  \lambda \ | \    A  \lambda -\bar y= p, \quad  \lambda \in \mathcal{R}\} = \{ \lambda \ | \ A \lambda -\bar y = p, \ E  \lambda \leq E \bar \lambda_0 \}, \ \ 
 \end{equation}
is also a polyhedral set. 
%With \eqref{eq:cons:normacone}, it can be seen that $\lambda  \in \Gamma_1(p)\cap\Gamma_2(0) \cap \mathbb{B}_{\epsilon}(\bar \lambda) = M(p) \cap \mathbb{B}_{\epsilon}(\bar \lambda)$.

% \begin{equation}
% \{  \lambda \ | \    A  \lambda -\bar y= p, \quad  \lambda \in \mathcal{R}\}.
% \end{equation}
For any $\lambda  \in \Gamma_1(p)\cap\Gamma_2(0) \cap \mathbb{B}_{\epsilon}(\bar \lambda)=\hat \Gamma(p) \cap \mathbb{B}_{\epsilon}(\bar \lambda) $, denote $\tilde \lambda$ as its projection on $ M(0)$. Since $\bar \lambda \in M(0)$, we thus have
\[
\|\lambda - \tilde \lambda \| \leq \| \lambda - \bar \lambda\| \leq \epsilon  \Rightarrow \tilde \lambda \in \mathbb{B}_{\epsilon}( \lambda)   \Rightarrow  \tilde \lambda \in \mathbb{B}_{\alpha}(0).
\]
Together with $\tilde \lambda \in M(0)$ and $\tilde \lambda \in \mathcal{R}$, we see $\tilde \lambda \in \Gamma_2(0)$  by \eqref{eq:gamma20:1} and \eqref{eq:gamma20:2}. We thus conclude that $\tilde \lambda \in \hat \Gamma(0) =\Gamma_1(0)\cap\Gamma_2(0)$.    By the celebrated results of Hoffman error bound \cite{Hof} based on the polyhedral set in \eqref{eq:matrix:equality}, for any  $\lambda \in\hat \Gamma(p) \cap \mathbb{B}_{\epsilon}(\bar \lambda)$, there exists a constant $\kappa$ such that
 \begin{equation}
 \text{dist}(\lambda, \hat \Gamma(0)) \leq  \|\lambda - \tilde \lambda \| = \text{dist}(\lambda, M(0)) \leq \kappa \|p\|,\quad \forall \lambda  \in \hat \Gamma(p) \cap \mathbb{B}_{\epsilon}(\bar \lambda),
 \end{equation}
 since $E  \lambda \leq E \bar \lambda_0$ by $\lambda  \in \hat \Gamma(p)= \Gamma_1(p)\cap\Gamma_2(0)$.
We thus get the calmness of $\hat\Gamma(p)$ at $(0,\bar \lambda)$. While $L=0$, i.e., $\bar \lambda_{i} \in \text{bd}  \mathbb{B}_{\alpha}^i(0)$, $i=1, \cdots, m$, one can readily check that
$\hat \Gamma(p) = \Gamma_1(p)\cap\Gamma_2(0) =  \emptyset$ whenever $p_i \neq 0$ with $p=(p_1, \cdots, p_m)^{T}$. The calmness follows by definition and the proof is thus finished.
%  denoting $p^L = (p_1, \cdots, p_L )^T$, $p^M = (p_{L+1}, \cdots, p _m)$, $A_L = (a_{i,j})_{L\times 2m}$ where $A = (a_{i,j})_{m \times 2m}$ and $A = \begin{bmatrix}
% A_L \\A_M
% \end{bmatrix}$. Introduce
% \[
% \Gamma_1^L(0) =  \{ \lambda^L \ | \ 0 = A_L \lambda^L - b_L\}, \quad \lambda^L = (\lambda_1, \cdots, \lambda_L)^T,\quad b^L = (b_1, \cdots, b_L)^T.
% \]
% Suppose $\lambda_1^L$ being the projection of $\lambda^L$ to $\Gamma_1^L(0)$. Since $\bar \lambda_L = (\bar \lambda_{J_1}, \cdots, \bar \lambda_{J_L})^{T} \in  \Gamma_1^L(0)$, we have
% \[
% \|\lambda_i-  \lambda_{1,i}\| \leq \| \lambda_i - \bar \lambda_{J_i}\| \leq \epsilon \Rightarrow \lambda_1^L \in \Pi_{i=1}^L\Gamma_2^i(0),  i=1, \cdots, L \Rightarrow
% \lambda_1^L \in   \Gamma_1^L(0) \cap \Pi_{i=1}^L\Gamma_2^i(0) \cap \Pi_{i=1}^L\mathbb{B}_{\epsilon}^i(\bar \lambda_{J_i}).
% \]
% With the help of calmness of $A_L$ (due to the Hoffman error bound), this exists $\kappa_L >0$ such that
% \[
% \text{dist}(\lambda^L, \Gamma_1^L(0) \cap \Pi_{i=1}^L\Gamma_2^i(0)) \leq \| \lambda^L - \lambda_1^L\| = \text{dist}(\lambda^L, \Gamma_1^L(0)) \leq \kappa_L \|p_L\|, \forall  \lambda \in \Gamma_1(p) \cap \Gamma_2(0) \cap \mathbb{B}_{\epsilon}(\bar \lambda).
% \]
% For $ \lambda_{i}$, $i=L+1, \cdots, m$, since 
% \[
% \{ \lambda_i \ |  p_i = A_i \}
% \]
 \end{proof}

\begin{remark}
	Similar result is also given in \cite{YTP} (see Example 4.1(ii) of \cite{YTP}), where the delicate analysis based on LMI-representable (Linear Matrix Inequalities representable) functions are employed. It is proved that while the solution $\bar \lambda \in \mathcal{X}$ satisfies $0 \in \text{ri}~ \partial f(\bar \lambda)$, then $f$ has the Kurdyka-\L ojasiewicz exponent $1/2$ where $``\text{ri}" $ denotes the relative interior. 
\end{remark}

\begin{remark}
	For the the Assumption \ref{asump:existence} for the model \eqref{eq:dual:rof}, with the optimality conditions \eqref{eq:subgradient:dual} and \eqref{eq:hk:resi:eq0:u} and the notation $(\nabla \bar u)_k $ for the vectorized $\nabla \bar  u$, we see $\bar u = H^{-1}(\Div \bar \lambda + f)$ and 
	\[
	0 \in 	-\nabla \bar u + \partial g(\bar \lambda) \rightarrow  (\nabla \bar u)_k \in \mN_{\mathbb{B}_{\alpha}^k(0)}(\bar \lambda_k).
	\]
	Assumption \ref{asump:existence} means there exists a solution $(\bar u, \bar \lambda)$ for the saddle-point problem that either $ (\nabla \bar u)_k =0 $ if $\bar \lambda_k \in  \text{int} \mathbb{B}_{\alpha}^k(0)$ either  $ (\nabla \bar  u)_k \neq 0 $ if $\bar \lambda_k \in  \text{bd} \mathbb{B}_{\alpha}^k(0)$ (i.e. $|\bar \lambda_k|=\alpha$). It is not strict and coincides with the  optimality conditions in \eqref{eq:hk:resi:neq0} and \eqref{eq:hk:resi:eq0}.
\end{remark}

Henceforth, we denote $\mathcal{X}^A$ and $\mathcal{X}^I$ as the solution sets for the dual problem \eqref{eq:dual:rof} for the anisotropic TV case and the isotropic TV case respectively. With the stopping criterion \eqref{stop:a}, we have the following global and local convergence.
\begin{theorem}\label{thm:ani:KKT}
For the anisotropic TV regularized problem \eqref{eq:ROF}, denote the iteration sequence $(u^k, p^k,\lambda^k)$ generated by ALM-PDP, ALM-PDD, ALM-PT or ALM-PP with stopping criteria \eqref{stop:a}. The sequence $(u^k, p^k,\lambda^k)$  is bounded and  convergences to $(u^*, p^*, \lambda^*)$ globally. If $T_d: =\partial d$  satisfies Assumption \ref{asump:existence} and is thus metrically subregular for the origin with modulus $\kappa_d$ through Assumption \ref{asump:existence},  with the additional stopping criteria \eqref{stop:b1},  the sequence $\{\lambda^k\}$ converges to  $\lambda^* \in \mathcal{X}^A$. For  sufficiently large $k$, we have the following local linear convergence
\begin{equation}\label{eq:convergence:rate:dual:ani}
\emph{dist}(\lambda^{k+1}, \mathcal{X}^A) \leq \theta_k \emph{dist}(\lambda^k, \mathcal{X}^A),
\end{equation}
where
 \[
 \theta_k = [\kappa_d(\kappa_d^2 + \sigma_k^2)^{-1/2} + \delta_k](1-\delta_k)^{-1}, \ \emph{as} \ k\rightarrow \infty, \  \theta_k \rightarrow  \theta_{\infty} = \kappa_d(\kappa_d^2+ \sigma_{\infty}^2)^{-1/2} < 1.
 \]
If in addition that $T_l$ is metrically subregular at $(u^*, p^*, \lambda^*)$ for the origin with modulus $\kappa_l$ and the stopping criteria \eqref{stop:b2} is employed, then for sufficiently large $k$, we have
\begin{equation}\label{eq:convergence:rate:up:ani}
\|(u^{k+1}, p^{k+1}) - (u^k, p^k)\| \leq \theta_k'\|\lambda^{k+1}-\lambda^k\|,
\end{equation}
where $\theta_k'=\kappa_l(1+\delta_k')/\sigma_k$ with $\displaystyle{\lim_{k\rightarrow \infty}\theta_k' = \kappa_l/\sigma_{\infty}}$.
\end{theorem}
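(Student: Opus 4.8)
The plan is to recognize this statement as a direct instance of the inexact proximal point framework of Rockafellar \cite{Roc1, Roc2} and its refinement by Luque \cite{LU}, exactly as used for composite problems in \cite{LST, ZZST}. The starting point is the classical equivalence: the augmented Lagrangian iteration \eqref{eq:alm:up}--\eqref{eq:update:lambda} applied to \eqref{eq:ROF} coincides with the inexact proximal point algorithm $\lambda^{k+1} \approx (I + \sigma_k T_d)^{-1}(\lambda^k)$ driving the multiplier, where $T_d = \partial d$ is the maximal monotone operator of the dual \eqref{eq:dual:rof}, while the full iterate $(u^{k+1}, p^{k+1}, \lambda^{k+1})$ is simultaneously an approximate proximal step for the KKT operator $T_l$. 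First I would record that the implementable stopping rules \eqref{stop:a}, \eqref{stop:b1}, \eqref{stop:b2}, phrased through the suboptimality of $\Phi_k$, are exactly Rockafellar's criteria (A), (B) and Luque's residual criterion once transported to the proximal residual via the Moreau identity \eqref{eq:moreau:indentity} and the duality relations of Section \ref{sec:rofALM}; this bridge is what lets the abstract theorems apply verbatim.

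For the global statement, since the KKT system is assumed solvable, $\mathcal{X}^A = T_d^{-1}(0)$ is nonempty. Under \eqref{stop:a} with $\sum_k \epsilon_k < \infty$, Theorem~1 of \cite{Roc1} gives that $\{\lambda^k\}$ is bounded and converges to some $\lambda^* \in \mathcal{X}^A$. To pass to the primal iterates I would invoke the strong convexity of $D$ in $u$ (intrinsic for the ROF case and ensured by $\mu>0$ for deblurring) together with the soft-thresholding relation \eqref{eq:moreau:p:update}: convergence of $\lambda^k$ forces $u^k \to u^*$ through the uniformly strongly convex $u$-subproblem, and then $p^k \to p^*$ follows by continuity of the thresholding map. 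Hence $(u^k, p^k, \lambda^k) \to (u^*, p^*, \lambda^*)$.

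For the dual linear rate I invoke the metric subregularity of $T_d = \partial d$, which in the anisotropic model holds because the feasible set is polyhedral so that $\partial d$ is a polyhedral mapping (the argument following Theorem \ref{thm:metric:regular:lag}), or is supplied by Theorem \ref{thm:metric:regular:dual:iso} under Assumption \ref{asump:existence}, with modulus $\kappa_d$. For $k$ large enough that $\lambda^k$ enters the neighborhood where subregularity is valid, Luque's estimate for the inexact proximal step under \eqref{stop:b1} yields
\[
\text{dist}(\lambda^{k+1}, \mathcal{X}^A) \leq \frac{\kappa_d (\kappa_d^2 + \sigma_k^2)^{-1/2} + \delta_k}{1 - \delta_k}\,\text{dist}(\lambda^k, \mathcal{X}^A),
\]
which is precisely \eqref{eq:convergence:rate:dual:ani} with the stated $\theta_k$. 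Because $\sigma_k \nearrow \sigma_\infty < \infty$ and $\delta_k \to 0$, the contraction factor stabilizes at $\theta_\infty = \kappa_d(\kappa_d^2 + \sigma_\infty^2)^{-1/2} < 1$ rather than tending to zero; tracking the finiteness of $\sigma_\infty$ is exactly what blocks superlinear acceleration here.

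For the increment bound \eqref{eq:convergence:rate:up:ani} I would use the metric subregularity of $T_l$ at $(u^*, p^*, \lambda^*)$ (Theorem \ref{thm:metric:regular:lag}, polyhedral in the anisotropic case) with modulus $\kappa_l$, together with \eqref{stop:b2}, which controls $\text{dist}(0, \partial\Phi_k)$ and hence the residual of $(u^{k+1}, p^{k+1}, \lambda^{k+1})$ in $T_l$ by $\delta_k'\|\lambda^{k+1}-\lambda^k\|/\sigma_k$. Subregularity then converts this residual into the distance of $(u^{k+1}, p^{k+1})$ to the solution set, and the update $\lambda^{k+1} = \lambda^k + \sigma_k(\nabla u^{k+1} - p^{k+1})$ ties that distance to $\|\lambda^{k+1}-\lambda^k\|$, delivering $\theta_k' = \kappa_l(1+\delta_k')/\sigma_k$ with limit $\kappa_l/\sigma_\infty$. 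The main obstacle I anticipate is not the convergence machinery itself, which is a direct citation of \cite{Roc1, Roc2, LU}, but the careful verification that the implementable $\Phi_k$-based criteria genuinely imply the abstract proximal residual criteria in the correct norms — the suboptimality-to-residual translation through the Moreau identity and the strong convexity of the subproblem — and the bookkeeping of constants so that the finite limiting step size $\sigma_\infty$ produces exactly the stated $\theta_\infty$ and $\kappa_l/\sigma_\infty$.
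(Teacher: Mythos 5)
Your proposal is correct and follows essentially the same route as the paper: global convergence of $(u^k,p^k,\lambda^k)$ from criterion \eqref{stop:a} via Theorem~4 of \cite{Roc2} (Theorem~1 of \cite{Roc1}) together with the strong convexity of $D$, the dual rate \eqref{eq:convergence:rate:dual:ani} from the metric subregularity of $T_d$ combined with Theorem~5 of \cite{Roc2} and \cite{LU} under \eqref{stop:b1}, and the primal bound \eqref{eq:convergence:rate:up:ani} from the metric subregularity of $T_l$ together with \eqref{stop:b2} and formula (4.21) of \cite{Roc2}. The only cosmetic differences are that the paper explicitly establishes nonemptiness of the primal and dual solution sets via coercivity and Fenchel--Rockafellar duality rather than assuming KKT solvability, and it obtains primal convergence directly from \cite{Roc2} rather than through your strong-convexity/continuity-of-thresholding argument.
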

\begin{proof}
 Since $X$ is finite dimensional reflexive space and the primal function \eqref{eq:ROF} is l.s.c. proper convex functional and strongly convex, hence coercive. Thus the existence of the solution can be guaranteed \cite{KK} (Theorem 4.25). Furthermore, since $\dom D = X$ and $\dom \|\cdot \|_1 = Y$, by Fenchel-Rockafellar theory \cite{KK} (Chapter 4.3) (or Theorem 5.7 of \cite{Cla}), the solution to the dual problem \eqref{eq:dual:rof} is not empty and 
 \[
 \inf_{u \in X} D(u) +  \alpha \|\nabla u\|_{1} = \sup_{\lambda \in Y} -d(\lambda).
 \]
   By \cite{Roc2} (Theorem 4) (or Theorem 1 of \cite{Roc1} where the augmented Lagrangian method  essentially comes from proximal point method applying to the dual problem $\partial d$), with criterion \eqref{stop:a}, we get the boundedness of $\{\lambda^k\}$. The uniqueness of $(u^*,p^*)$ follows from the strongly convexity of $F(u)$ and the $p^*=\nabla u^*$ which is one of the KKT conditions. The boundedness of $(u^k,p^k)$ and convergence of $(u^k, p^k,  \lambda^k)$ then follows by \cite{Roc2} (Theorem 4). 
   
   The local convergence rate  \eqref{eq:convergence:rate:dual:ani} with metrical subregularity of $T_g$ and the stopping criteria  \eqref{stop:a}  \eqref{stop:b1} can be obtained from \cite{Roc2} (Theorem 5) (or Theorem 2 of \cite{Roc1}). Now we turn to the local convergence rate of $(u^k,p^k)$. By the metrical subregularity of $T_l$, for sufficiently large $k$, we have
   \[
   \| (u^{k+1}, p^{k+1}) - (u^*, p^*))\| + \text{dist} (\lambda^k, \mathcal{X}^A) \leq \kappa_l \text{dist} (0, T_l(u^{k+1}, p^{k+1}, \lambda^{k+1})).
   \]
   Together with the stopping criteria \eqref{stop:b2} and \cite{Roc2} (Theorem 5 and Corollary with formula (4.21)), we arrive at 
   \begin{align*}
   \| (u^{k+1}, p^{k+1}) - (u^*, p^*))\| &\leq \kappa_l \sqrt{{\delta_k'^2}{\sigma_k^{-2}} \|\lambda^{k+1} -\lambda^k\|^2 + {\sigma_k^{-2}} \|\lambda^{k+1} -\lambda^k\|^2 } \\
   & = \kappa_l\sqrt{\delta_k'^2+1}\sigma_k^{-1}\|\lambda^{k+1} -\lambda^k\| \leq \theta_k' \|\lambda^{k+1} -\lambda^k\|,
   \end{align*}
   which leads to \eqref{eq:convergence:rate:up:ani}.
\end{proof}
For the isotropic case, we can get similar results with metric subregularity of the dual problem.
\begin{theorem}
		For the isotropic TV regularized problem \eqref{eq:ROF}, denote the iteration sequence $(u^k, p^k,\lambda^k)$ generated by ALM-PDP, ALM-PDD, or ALM-PT with stopping criteria \eqref{stop:a}.
		Then the sequence $(u^k, p^k,\lambda^k)$  is bounded and convergences to $(u^*, p^*, \lambda^*)$ globally. If $T_d:=\partial d$ satisfies Assumption \ref{asump:existence} and is thus metrically subregular  for the origin with modulus $\kappa_d$ and with the additional stopping criteria \eqref{stop:b1}, then the sequence $\{\lambda^k\}$ converges to  $\lambda^* \in \mathcal{X}^I$ and for arbitrary sufficiently large $k$, 
		\begin{equation}
		\emph{dist}(\lambda^{k+1}, \mathcal{X}^I) \leq \theta_k \emph{dist}(\lambda^k, \mathcal{X}^I),
		\end{equation}
		where
		\[
		\theta_k = [\kappa_d(\kappa_d^2 + \sigma_k^2)^{-1/2} + \delta_k](1-\delta_k)^{-1}, \ \emph{as} \ k\rightarrow \infty, \  \theta_k \rightarrow  \theta_{\infty} = \kappa_d(\kappa_d^2+ \sigma_{\infty}^2)^{-1/2} < 1.
		\]
	\end{theorem}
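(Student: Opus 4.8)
The plan is to follow the template of the proof of Theorem~\ref{thm:ani:KKT}, since the augmented Lagrangian method coincides with the proximal point algorithm applied to the dual operator $\partial d$ \cite{Roc1,Roc2}, and the decisive metric subregularity of this dual operator in the isotropic setting has already been secured in Theorem~\ref{thm:metric:regular:dual:iso}. Concretely, only two ingredients are needed: a global convergence statement that is insensitive to the polyhedrality of the norm, and a local rate driven solely by $T_d=\partial d$.

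First I would establish global convergence exactly as in the anisotropic case. On the finite-dimensional space $X$ the primal objective in \eqref{eq:ROF} is proper, lower semicontinuous, convex and strongly convex (because $D$ is strongly convex), hence coercive, so a primal minimizer exists by \cite{KK} (Theorem~4.25). Since $\dom D = X$ and $\dom\|\cdot\|_1 = Y$, Fenchel--Rockafellar duality yields a nonempty dual solution set $\mathcal{X}^I$ with no duality gap. Interpreting ALM as the proximal point method on the maximal monotone operator $\partial d$ and invoking \cite{Roc2} (Theorem~4) under stopping rule \eqref{stop:a}, I obtain boundedness of $\{\lambda^k\}$ and convergence of the full sequence $(u^k,p^k,\lambda^k)\to(u^*,p^*,\lambda^*)$; the limit $(u^*,p^*)$ is unique by strong convexity of $D$ together with the KKT relation $p^*=\nabla u^*$. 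This argument never uses any polyhedral structure, so it transfers verbatim from the anisotropic proof.

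Second, for the local rate I would feed Theorem~\ref{thm:metric:regular:dual:iso} into Rockafellar's linear convergence theorem. Under Assumption~\ref{asump:existence}, Theorem~\ref{thm:metric:regular:dual:iso} guarantees that $T_d=\partial d$ is metrically subregular at $\lambda^*$ for the origin with some modulus $\kappa_d$. Supplying this modulus to \cite{Roc2} (Theorem~5), equivalently \cite{Roc1} (Theorem~2), with the stopping criteria \eqref{stop:a} and \eqref{stop:b1}, produces precisely the asserted estimate with contraction factor $\theta_k=[\kappa_d(\kappa_d^2+\sigma_k^2)^{-1/2}+\delta_k](1-\delta_k)^{-1}$; since $\delta_k\to 0$ and $\sigma_k\uparrow\sigma_\infty<\infty$, the limiting factor is $\theta_\infty=\kappa_d(\kappa_d^2+\sigma_\infty^2)^{-1/2}<1$, giving the stated asymptotic linear convergence of $\{\lambda^k\}$ to $\mathcal{X}^I$.

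I do not expect any genuine obstacle inside this proof, because the hard analytic work has been offloaded to Theorem~\ref{thm:metric:regular:dual:iso}, whose proof uses the calm intersection theorem rather than the polyhedral argument of Theorem~\ref{thm:metric:regular:lag}. The one structural point worth flagging is precisely why this theorem is stated separately from, and is weaker than, Theorem~\ref{thm:ani:KKT}: the isotropic norm $\|\cdot\|_1$ is not polyhedral, so $T_l$ need not be a polyhedral mapping and the subregularity of the full KKT operator is unavailable. Consequently I would \emph{not} assert the primal--dual estimate \eqref{eq:convergence:rate:up:ani}; only the dual iterates enjoy a provable linear rate here, obtained purely through $\partial d$. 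The remaining care is merely bookkeeping: matching \eqref{stop:a} and \eqref{stop:b1} to the hypotheses of Rockafellar's rate theorem.
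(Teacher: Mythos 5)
Your proposal is correct and follows exactly the route the paper intends: the paper gives no separate proof for this theorem, stating only that the isotropic case yields ``similar results'' by repeating the argument of Theorem~\ref{thm:ani:KKT} with the metric subregularity of $\partial d$ now supplied by Theorem~\ref{thm:metric:regular:dual:iso} instead of polyhedrality. Your observation that the primal--dual estimate \eqref{eq:convergence:rate:up:ani} must be dropped because the isotropic norm is not polyhedral (so the subregularity of $T_l$ is unavailable) correctly explains why the isotropic statement is weaker.
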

%\section{Convergence of Semismooth Newton Method}

 \section{Numerical Experiments}\label{sec:numer}
 In the numerics, we will first focus on the ROF denoising model for testing all the proposed algorithms, i.e., $K=I$, $\mu=0$ with $H=I$. We employ the standard finite difference discretization of the discrete gradient $\nabla$  and divergence operator $\Div$ \cite{CP}, which satisfies \eqref{eq:gradient:div:adjoint} and are convenient for operator actions based implementation. The initial value of $u^0$ is chosen as the noisy or degraded image $z$ and the dual variable is initialized zero value for all compared algorithms.    We employ the scaled sum of the residuals of $u$ and  $\lambda$ as our stopping criterion for all compared algorithms,
 \begin{equation}\label{eq:stopping:criterion}
 \text{Err}(u^{k+1}, \lambda^{k+1}): = [\text{res}(u)(u^{k+1},\lambda^{k+1}) + \text{res}(\lambda)(u^{k+1},\lambda^{k+1})]/\|f\|_F,
 \end{equation} 
where $\|\cdot\|_F$ denotes the Frobenius norm henceforth. The residual of $u$ is
\begin{equation}
\text{res}(u)(u^{k+1},\lambda^{k+1}):  = \|u^{k+1} - f + \Div \lambda^{k+1}\|_F,
\end{equation}
 and the residual of $\lambda$ is defined by
\begin{equation}
\text{res}(\lambda)(u^{k+1},\lambda^{k+1}) := \|\lambda^{k+1} - \mathcal{P}_{\alpha}(\lambda^{k+1}+ c_0 \nabla u^{k+1})\|_F.
\end{equation}
The residual $\text{res}(u)$ is the primal error of the optimality conditions in \eqref{eq:opti:primaldual} with Frobenius norm  and $\text{res}(\lambda)$ is used the measure the dual error in \eqref{eq:opti:primaldual}. Thus $ \text{Err}(u^{k+1}, \lambda^{k+1})$ in \eqref{eq:stopping:criterion} is employed to measure the primal and dual errors of the original primal model \eqref{eq:ROF}.
	
Besides the stopping criterion \eqref{eq:stopping:criterion}, the following residual originated in \cite{KK1} is also useful
\begin{equation}
\text{Res1}(u^{k+1},\lambda^{k+1}):= \|\mI_{\{\norm[\infty]{ \lambda} \leq \alpha\}}( \lambda^{k+1}) + \alpha | \nabla u^{k+1}| - \langle  \lambda^{k+1}, \nabla u^{k+1}\rangle\|_F.
\end{equation}
%The following residual $\max(|\nabla  u|,0) \lambda -\alpha \nabla u$ for $|\nabla u| \neq 0$ \eqref{eq:hk:resi:neq0} (see also \cite{HS}). Since while $\nabla u =0$ in \eqref{eq:hk:resi:eq0}, we also have $|\nabla  u| \lambda -\alpha \nabla u=0$.
 We will also  consider the following residual for measuring the optimality conditions  \eqref{eq:hk:resi:neq0} and \eqref{eq:hk:resi:eq0} originated from \cite{HS}  
\begin{equation}
\text{Res2}(u^{k+1},\lambda^{k+1})=\|\alpha \nabla u^{k+1}-|\nabla  u^{k+1}| \lambda^{k+1} \|_F. 
\end{equation}
The following primal-dual gap for the ROF model as in \cite{CP} is also a useful residual
\begin{equation*}
\gap(u^{k+1},\lambda^{k+1}) = \frac{\norm[2]{u^{k+1}-f}^2}{2}
+ \alpha \norm[1]{\grad u^{k+1}} + \frac{\norm[2]{\Div \lambda^{k+1} + f}^2}{2}
- \frac{\norm[2]{f}^2}{2} + \mI_{\{\norm[\infty]{\lambda} \leq \alpha\}}(\lambda^{k+1}).
\end{equation*}
We use the following normalized primal-dual gap \cite{CP}
\begin{equation}  \label{eq:l2-tv-gap}
\text{gap}(u^{k+1},\lambda^{k+1}) :  =\gap(u^{k+1},\lambda^{k+1}) /NM, \quad \text{with} \ \  NM = N*M, \ \  u^k \in \mathbb{R}^{N\times M}.
\end{equation}

Let's now turn to the stopping criterion for linear iterative solvers including  CG  (conjugate gradient) and BiCGSTAB ( biconjugate gradient stabilized method) for each linear system for the Newton update in Algorithm \ref{alm:SSN_PDP}, \ref{alm:SSN_PDD}, \ref{alm:SSN_PT}.  For 
Algorithm \ref{alm:SSN_PT}, we use CG  due to the symmetric linear system \eqref{eq:ssn:PT:ani}, and \eqref{eq:ssn:PT:iso}. For any other linear system, e.g.,  \eqref{eq:calculate:u:first}, \eqref{eq:ssn:pdd:h}, we use BiCGSTAB (see Figure 9.1 of \cite{VAN}), which is very efficient for nonsymmetric linear system. The following stopping criterion is employed for solving linear systems to get  Newton updates with BiCGSTAB or CG \cite{HS}, 
\begin{equation}\label{eq:stop:bicg}
\text{tol}_{k+1}: =.1\min\left\{ \left(\frac{\text{res}_k}{\text{res}_0}\right)^{1.5},  \  \frac{\text{res}_k}{\text{res}_0}  \right\},
\end{equation}
which can help catch the superlinear convergence of semismooth Newton we employ. The $\text{res}_k$ in \eqref{eq:stop:bicg} denotes the relative residual of the Newton linear system after the $k$-th BiCGSTAB or CG iteration while $\text{res}_0$ denotes the original residual before BiCGSTAB or CG iterations. %We set $\text{tol}_{k+1} \in [10^{-9}, 10^{-92}]$ for our tests.

Now, we turn to the most important stopping criterion \eqref{stop:a}, \eqref{stop:b1}, \eqref{stop:b2} of each ALM iteration for determining how many Newton iterations are needed when solving the corresponding nonlinear systems \eqref{eq:opti:u:lambda} or \eqref{eq:u:alm:suntoh1}. The criterion \eqref{stop:b1} is not practical. New stopping criterion of ALM for cone programming can be found in \cite{CST}.
We found the following empirical stopping criterion for each ALM iteration is efficient numerically.
%For the semismooth Newton method involving the projection operator (SSNPT), we employ
%\begin{equation}\label{eq:alm:stop:ssnpp}
%\text{res-alm}_{SSNPP}^l := \|u^l - f +   \nabla ^*   (I + \sigma_k \partial G)^{-1}(\lambda^k + \sigma_k \nabla u^l )\|_F.
%\end{equation}
For the primal-dual semismooth Newton with auxiliary variable, we empirical employ
\begin{equation}\label{eq:stop:alm:ssn:dual}
\text{res-alm}_{SSNPD}^l = \|- \sigma_k \nabla u^l -  \lambda^k + \max \big(1.0, \dfrac{|\lambda^k + \sigma_k \nabla u^l|}{\alpha} \big) h^l\|_F,
\end{equation}
where $h^l$ is computed by Algorithm \ref{alm:SSN_PDP} or \ref{alm:SSN_PDD} before the projection to the feasible set. 
For the semismooth Newton method involving the soft thresholding operator (SSNPT), we employ
\begin{equation}\label{eq:alm:stop:ssnpt}
\text{res-alm}_{SSNPT}^l: = \|u^l - f + \nabla^* \lambda^k + \sigma_k \nabla^*\nabla u^l -\sigma_k \nabla^*(I + \frac{1}{\sigma_k} \partial \alpha \|\cdot\|_{1})^{-1}(\frac{\lambda^k + \sigma_k \nabla u^l}{\sigma_k})\|_F.
\end{equation}
Here we use $\text{res-alm}_{SSNPD}^l $ without $\text{res}(u)$ compared to \eqref{stop:b2} since we found $\text{res}(u)$ is usually much smaller compared to  \eqref{eq:stop:alm:ssn:dual} in numerics. We employ the following stopping criterion
\begin{equation}\label{eq:stop:alm}
\text{res-alm}_{SSNPD}^l \quad \text{or} \quad  \text{res-alm}_{SSNPT}^l \leq {\delta_k}/{\sigma_k},
\end{equation}
where $\delta_k$ is a small parameter which can be chosen as fixed constants $10^{-2}$, $10^{-4}$ and so on in our numerical tests. We emphasis that divided by $\sigma_k$ is of critical importance for the convergence of ALM, which is also required by the stopping criterion \eqref{stop:a}, \eqref{stop:b1}, \eqref{stop:b2}. 
\begin{table}%[!htb]
	\centering % centering table
	\begin{tabular}{l@{}l@{}rl@{}rl@{}rl@{}rl@{}rl@{}rl@{}rl@{}l@{}r} % creating ten columns
		%\begin{tabular}{lr@lr@r@lr@r@lr@r@lr@r@l} % creating ten columns
		%\begin{tabular}{l*{5}{c}r}
		\toprule
		%\cmidrule{2-9}
		%& \multicolumn{6}{c}{$\alpha = 0.2$}
		& \multicolumn{6}{c}{Anisotropic TV: $\alpha = 0.1$ }
		& \multicolumn{9}{c}{Lena: $256 \times 256$}\\
		%\cmidrule{2-12} %inserting double-line
		\midrule
		%\cmidrule{14-15}
		& \multicolumn{2}{c}{$n(t)$}
		& \multicolumn{2}{c}{$\text{res}(u)$}
		& \multicolumn{2}{c}{$\text{res}(\lambda)$}
		& \multicolumn{2}{c}{Res1}
		& \multicolumn{2}{c}{Res2}
		& \multicolumn{2}{c}{Gap}
		& \multicolumn{2}{c}{$\text{PSNR}$}
		& \multicolumn{2}{c}{Err}\\
		%\cmidrule{1-15} % inserts single-line
		\midrule
		%ALG2&& 97&(1.66s)  && 466&(7.51s) && 261&(4.53s) && 1189&(20.40s)\\
		%paDR &&69&(1.50s) && 303&(6.55s) && 92&(1.97s) && 420&(9.37)\\
		%\midrule % inserts single-line
		ALM-PDP &&4(5.42s)  && 2.46e-5 &&6.96e-3  &&4.98e-5  &&3.53e-5 && 5.89e-8&&19.25&&1e-4\\
		ALM-PDD && 4(23.98s)  && 1.68e-13 && 6.96e-3&& 4.98e-5 &&3.53e-5 &&5.89e-8&&19.25&&1e-4\\
		ALM-PT && 7(13.80s)  && 2.26e-4 && 7.57e-3 && 5.49e-5 &&3.89e-5&&5.88e-8&&19.25&&1e-4\\
	%	ALM-PP && 6(115.02s)      && 1.19e-3     && 7.49e-3   && 5.45e-5 &&3.86e-5 &&7.06e-8&&19.25&&1e-4\\
		\midrule
		ALG2 &&1032(6.71s)  && 1.30e-2 &&6.92e-4 &&7.86e-6 &&5.56e-6 &&1.00e-8&&   19.25&&1e-4\\
	    \midrule 
        \midrule 
		ALM-PDP &&6(11.45s)  && 3.66e-5 &&9.12e-6  &&6.91e-8 &&4.93e-8&&2.58e-11&&19.25&&1e-6\\
		ALM-PDD && 7(56.49s)  && 3.66e-5 && 7.78e-5&& 1.55e-6 &&1.11e-6 &&7.85e-11&&19.25&&1e-6\\
		ALM-PT &&9(18.29s)  && 8.66e-6 && 3.73e-5 && 2.87e-7 &&2.04e-7&&1.15e-10&&19.25&&1e-6\\
	%	ALM-PP && 8(162.19s)   && 1.63e-5      &&3.83e-5   && 2.86e-7&&2.03e-7 &&1.20e-10
		
	%	.&&19.25&&1e-6\\
		\midrule
		ALG2 &&14582(90.28s)  && 1.37e-4 &&2.26e-7  &&2.46e-9 &&1.83e-9 &&4.93e-13&&19.25&&1e-6\\
		\midrule		% inserts single-line
		\midrule		
		ALM-PDP &&9(18.83s)  && 1.06e-6 &&3.74e-10 &&3.03e-12 &&2.16e-12 &&4.02e-16&&19.25&&1e-8\\
		ALM-PDD && 7(69.76s)  && 2.80e-7 && 1.07e-7&& 8.62e-10 &&6.22e-10 &&2.02e-13&&19.25&&1e-8\\
		ALM-PT && 11(28.26s)  && 6.41e-7 && 3.12e-9&&5.45e-12 &&3.94e-12&&1.29e-15&&19.25&&1e-8\\
	%	ALM-PP && 12(246.17s)  && 9.87e-7 && 3.23e-10    && 1.34e-12 &&9.48e-13 &&3.36e-15&&19.25&&1e-8\\
		\midrule
		ALG2 &&365019(2069.36s)  && 1.37e-6 &&2.88e-13  &&3.37e-15 &&2.38e-15 &&$<$eps&&19.25&&1e-8\\
		\bottomrule % inserts single-line
		\bottomrule % inserts single-line
	\end{tabular}
	\vspace*{-0.5em}
	\caption{For $n(t)$ of the first column of each algorithm, $n$ presents the outer ALM iteration number or iteration number for ALG2 for the scaled residual Err$(u^{k+1}, \lambda^{k+1})$ less than the stopping value, $t$ denoting the CPU time. The notation ``$<$eps" denotes the corresponding quality less than the machine precision, i.e.,  the ``eps" in Matlab. }
	\label{tab:rof:lena:ani}
\end{table}

\begin{table}%[!htb]
	\centering % centering table
	\begin{tabular}{l@{}l@{}rl@{}rl@{}rl@{}rl@{}rl@{}rl@{}rl@{}l@{}r} % creating ten columns
		%\begin{tabular}{lr@lr@r@lr@r@lr@r@lr@r@l} % creating ten columns
		%\begin{tabular}{l*{5}{c}r}
		\toprule
		%\cmidrule{2-9}
		%& \multicolumn{6}{c}{$\alpha = 0.2$}
		& \multicolumn{6}{c}{Isotropic TV: $\alpha = 0.1$ }
		& \multicolumn{8}{c}{Cameraman: $256 \times 256$}\\
		%\cmidrule{2-12} %inserting double-line
		\midrule
		%\cmidrule{14-15}
		& \multicolumn{2}{c}{$n(t)$}
		& \multicolumn{2}{c}{$\text{res}(u)$}
		& \multicolumn{2}{c}{$\text{res}(\lambda)$}
		& \multicolumn{2}{c}{Res1}
		& \multicolumn{2}{c}{Res2}
		& \multicolumn{2}{c}{Gap}
		& \multicolumn{2}{c}{$\text{PSNR}$}
		& \multicolumn{2}{c}{Err}\\
		%\cmidrule{1-15} % inserts single-line
		\midrule
		%ALG2&& 97&(1.66s)  && 466&(7.51s) && 261&(4.53s) && 1189&(20.40s)\\
		%paDR &&69&(1.50s) && 303&(6.55s) && 92&(1.97s) && 420&(9.37)\\
		%\midrule % inserts single-line
		ALM-PDP &&3(4.75s)  && 2.32e-3 &&1.14e-2  &&5.61e-5  &&9.36e-5 && 9.70e-8&&19.18&&1e-4\\
		ALM-PDD && 5(57.29s)  && 8.61e-3 && 2.61e-3&& 1.37e-5 &&2.19e-5 &&2.01e-8&&19.18&&1e-4\\
		ALM-PT && 8(174.07s)  && 5.23e-5 && 4.30e-3 && 2.31e-5 &&3.64e-5&&3.34e-8&&19.18&&1e-4\\
	%	ALM-PP && 7(114.10s)  &&2.97e-3     && 4.30e-3   && 2.29e-5 &&3.63e-5 &&3.34e-8&&19.18&&1e-4\\
		\midrule
		ALG2 &&479(2.65s)  && 1.59e-2 &&6.03e-6 &&7.38e-6 &&7.01e-4 &&8.69e-9&&   19.18&&1e-4\\
		\midrule 
		\midrule 
		ALM-PDP &&7(20.35s)  && 6.75e-5 &&8.08e-5  &&4.46e-7 &&6.96e-7&&4.01e-10&&19.18&&1e-6 \\%6.75e-5 resu
		%ALM-PDD && 7(187.79s)  && 3.25e-5 && 8.08e-5&& 4.46e-7 &&6.91e-7 &&4.01e-10&&19.18&&1e-6\\ % with starting step size 4
			ALM-PDD && 7(191.50s)  && 3.98e-5 && 4.94e-5&& 2.73e-7 &&4.24e-7 &&2.28e-10&&19.18&&1e-6\\ % with starting step size 6
		ALM-PT &&11(787.01s)  && 1.89e-5 && 6.97e-5 && 3.91e-7 &&6.02e-7&&3.29e-10&&19.18&&1e-6\\
	%	ALM-PP && 10(506.72s)   && 5.08e-5     &&6.96e-5   && 3.90e-7&&6.00e-7 &&3.29e-10&&19.18&&1e-6\\
		\midrule
		ALG2 &&10808(55.16s)  && 1.60e-4 &&1.60e-9 &&2.46e-9 &&2.63e-7 &&6.56e-13&&19.18&&1e-6\\
		\midrule		% inserts single-line
		\midrule		
		ALM-PDP &&10(243.57s)  && 1.07e-6 &&4.22e-7 &&2.43e-9 &&3.82e-9 &&1.04e-12&&19.18&&1e-8\\
		ALM-PDD && --- && --- && ---&&--- &&--- &&---&&---&&1e-8\\
		ALM-PT && ---  && --- && ---&&--- &&---&&---&&---&&1e-8\\
%		ALM-PP && --- && --- && ---   && --- &&--- &&---&&---&&1e-8\\
		\midrule
		ALG2 &&262811(1348.55s)  && 1.60e-6 &&5.01e-13  &&7.47e-13 &&8.11e-11 &&$<$eps&&19.18&&1e-8\\
		\bottomrule % inserts single-line
		\bottomrule % inserts single-line
	\end{tabular}
	\vspace*{-0.5em}
	\caption{For $n(t)$ of the first column of each algorithm, $n$ presents the outer ALM iteration number or iteration number for ALG2 for the scaled residual Err$(u^{k+1}, \lambda^{k+1})$ less than the stopping value, $t$ denoting the CPU time. ``---" denotes the iteration time more than 2000 seconds. The notation ``$<$eps" is the same as in Table \ref{tab:rof:lena:ani}.}
	\label{tab:rof:cameraman:iso}
\end{table}

\begin{table}%[!htb]
	\centering % centering table
	\begin{tabular}{l@{}l@{}rl@{}rl@{}rl@{}rl@{}rl@{}rl@{}rl@{}l@{}r} % creating ten columns
		%\begin{tabular}{lr@lr@r@lr@r@lr@r@lr@r@l} % creating ten columns
		%\begin{tabular}{l*{5}{c}r}
		\toprule
		%\cmidrule{2-9}
		%& \multicolumn{6}{c}{$\alpha = 0.2$}
		& \multicolumn{6}{c}{Anisotropic TV: $\alpha = 0.1$ }
		& \multicolumn{9}{c}{Sails: $768 \times 512$}\\
		%\cmidrule{2-12} %inserting double-line
		\midrule
		%\cmidrule{14-15}
		& \multicolumn{2}{c}{$n(t)$}
		& \multicolumn{2}{c}{$\text{res}(u)$}
		& \multicolumn{2}{c}{$\text{res}(\lambda)$}
		& \multicolumn{2}{c}{Res1}
		& \multicolumn{2}{c}{Res2}
		& \multicolumn{2}{c}{Gap}
		& \multicolumn{2}{c}{$\text{PSNR}$}
		& \multicolumn{2}{c}{Err}\\
		%\cmidrule{1-15} % inserts single-line
		\midrule
		%ALG2&& 97&(1.66s)  && 466&(7.51s) && 261&(4.53s) && 1189&(20.40s)\\
		%paDR &&69&(1.50s) && 303&(6.55s) && 92&(1.97s) && 420&(9.37)\\
		%\midrule % inserts singl\sqrt{}e-line
		ALM-PDP &&6(25.50s)&& 1.56e-2 &&1.15e-3  &&7.31e-5 &&2.28e-5&& 1.61e-5&&18.90&&1e-4\\
		ALM-PDD && 4(159.38)  && 7.60e-3 && 1.97e-2&&2.73e-4&&2.28e-4&&4.76e-8&&18.90&&1e-4\\
		%ALM-PT && 7(317.76s)  && 3.65e-3 && 1.53e-2&& 1.15e-4 &&8.16e-5&&4.55e-8  &&18.90&&1e-4\\
	    ALM-PT && 7(101.59s)  && 2.72e-4 && 1.53e-2&& 1.15e-4 &&8.16e-5&&4.55e-8  &&18.90&&1e-4\\
%		ALM-PP && 7(1126.07s)  && 1.32e-2 &&4.37e-3    && 1.96e-5 &&4.37e-3 &&1.85e-6 &&18.90&&1e-4\\
		\midrule
		ALG2 &&933(28.23s)   && 3.35e-2 &&1.83e-3 &&2.10e-5 &&1.49e-5 &&1.08e-8&&18.90&&1e-4\\
		\midrule 
		\midrule 
		ALM-PDP &&7(52.52s)  && 5.68e-5 &&2.25e-6 &&2.03e-8 &&1.45e-8&&9.42e-13&&18.90&&1e-6\\
		ALM-PDD && 6(377.60s)&& 2.18e-5 && 1.13e-5&& 9.03e-8 &&6.43e-8 &&1.25e-11&&18.90&&1e-6\\
		ALM-PT &&8(337.23s)  && 2.20e-5&& 5.58e-5 && 4.42e-7 && 3.17e-7 &&6.11e-11&&18.90&&1e-6\\
	%	ALM-PP && 8(1417.66s) && 4.65e-5   &&4.20e-5  && 3.42e-7&&2.45e-7&&7.33e-11&&18.90&&1e-6\\
		\midrule
		ALG2 &&14842(476.20s)  && 3.53e-4 &&3.40e-7 &&3.94e-9 &&2.99e-9 &&3.09e-13&&18.90&&1e-6\\
		\midrule		% inserts single-line
		\midrule		
		ALM-PDP &&7(54.51s) && 5.65e-7 &&2.25e-6 &&2.03e-8 &&1.45e-8 &&9.38e-13&&18.90&&1e-8\\
			ALM-PDD &&14(122.23s) && 2.68e-6 &&3.69e-12 &&3.44e-14 &&2.47e-14 &&$<$eps&&18.90&&1e-8\\
	%	ALM-PDD && 7(472.86s) && 1.27e-10 && 1.08e-10&& 8.64e-10 &&6.14e-10 &&7.12e-14&&18.90&&1e-8\\
		ALM-PT && 10(363.27s) && 3.30e-6 &&1.71e-8&&1.61e-10 &&1.14e-10&&1.44e-14&&18.90&&1e-8\\ %3.70e-17
	%	ALM-PP &&10(1797.09s)
	%	 && 5.38e-7 &&1.36e-8 &&1.05e-10 &&7.48e-11&&1.20e-14 &&18.90&&1e-8\\
		\midrule
		ALG2 &&371060(11339.88s)  && 3.54e-6 &&6.98e-13  &&8.20e-15 &&5.80e-15 &&$<$eps&&18.90&&1e-8\\%7.63e-17
		\bottomrule % inserts single-line
		\bottomrule % inserts single-line
	\end{tabular}
	\vspace*{-0.5em}
	\caption{For $n(t)$ of the first column of each algorithm, $n$ presents the outer ALM iteration number or iteration number for ALG2 for the scaled residual Err$(u^{k+1}, \lambda^{k+1})$ less than the stopping value, $t$ denoting the CPU time. The notation ``$<$eps" is the same as in Table \ref{tab:rof:lena:ani}.}
	\label{tab:rof:sails:ani}
\end{table}

\begin{table}%[!htb]
	\centering % centering table
	\begin{tabular}{l@{}l@{}rl@{}rl@{}rl@{}rl@{}rl@{}rl@{}rl@{}l@{}r} % creating ten columns
		%\begin{tabular}{lr@lr@r@lr@r@lr@r@lr@r@l} % creating ten columns
		%\begin{tabular}{l*{5}{c}r}
		\toprule
		%\cmidrule{2-9}
		%& \multicolumn{6}{c}{$\alpha = 0.2$}
		& \multicolumn{6}{c}{Isotropic TV: $\alpha = 0.1$ }
		& \multicolumn{9}{c}{Monarch: $768 \times 512$}\\
		%\cmidrule{2-12} %inserting double-line
		\midrule
		%\cmidrule{14-15}
		& \multicolumn{2}{c}{$n(t)$}
		& \multicolumn{2}{c}{$\text{res}(u)$}
		& \multicolumn{2}{c}{$\text{res}(\lambda)$}
		& \multicolumn{2}{c}{Res1}
		& \multicolumn{2}{c}{Res2}
		& \multicolumn{2}{c}{Gap}
		& \multicolumn{2}{c}{$\text{PSNR}$}
		& \multicolumn{2}{c}{Err}\\
		%\cmidrule{1-15} % inserts single-line
		\midrule
		%ALG2&& 97&(1.66s)  && 466&(7.51s) && 261&(4.53s) && 1189&(20.40s)\\
		%paDR &&69&(1.50s) && 303&(6.55s) && 92&(1.97s) && 420&(9.37)\\
		%\midrule % inserts single-line
		ALM-PDP &&3(25.12)  && 1.12e-2 &&1.43e-2  &&7.31e-5 &&1.20e-4&& 4.29e-8&&29.89&&1e-4\\
		ALM-PDD && 5(370.53s)  && 9.89e-3 && 5.82e-3&& 3.01e-5&&4.89e-5&&1.54e-8&&29.89&&1e-4\\
	%	ALM-PT &&8(6774.86s)  &&8.54e-5 && 9.73e-3 && 5.41e-5 &&8.19e-5 &&2.66e-8 &&29.89  &&1e-4\\
		%ALM-PT &&7(30224.3s)  &&4.16e-3 && 1.31e-2&& 8.06e-6&&1.87e-4&&3.33e-9&&29.89  &&1e-4\\
	%	ALM-PP && 6(15872.39s)      &&7.69e-9     && 5.76e-3   && 3.00e-5 &&4.85e-5 &&1.52e-8&&29.89&&1e-4\\
     	ALM-PT &&8(2009.60s)  && 8.34e-5 && 9.73e-3 && 5.13e-5 &&8.19e-5&&2.66e-8&&29.89&&1e-4\\
	  %  ALM-PP &&  8(1702.14s)     && 9.89e-3     && 9.70e-3    &&  5.08e-5 && 8.14e-5 && 2.67e-8 && 29.89 &&1e-4\\
		\midrule
		ALG2 &&525(15.77s)  && 2.86e-2 &&7.69e-6 &&1.02e-5 &&1.07e-3 &&4.76e-9&&29.89&&1e-4\\
		\midrule 
		\midrule 
	%	ALM-PDP &&7(185.07s)  && 4.04e-6 &&1.80e-4  &&9.79e-7 &&1.53e-6&&3.14e-10&&29.89&&1e-6\\
	    ALM-PDP &&7(145.27s)  && 7.98e-5 &&1.80e-4  &&9.79e-7 &&1.54e-6&&3.15e-10&&29.89&&1e-6\\
		ALM-PDD && 7(1379.33s)  && 1.47e-7 && 1.09e-4&& 5.96e-7 &&9.33e-7 &&1.78e-10&&29.89&&1e-6\\
		ALM-PT &&---  && --- && --- && --- &&---&&---&&---&&1e-6\\
	%	ALM-PP && ---  && ---    &&---  && ---&&--- &&---&&---&&1e-6\\
		\midrule
		ALG2 &&12049(387.21s)  && 2.87e-4 &&2.61e-9 &&3.79e-9 &&4.27e-7 &&3.63e-13&&29.89&&1e-6\\
		\midrule		% inserts single-line
		\midrule		
		ALM-PDP &&10(1549.91s)  && 1.79e-6 &&1.06e-6 &&5.90e-9 &&9.05e-9 &&8.94e-13&&29.89&&1e-8\\
		ALM-PDD && --- && --- && ---&&--- &&--- &&---&&---&&1e-8\\
		ALM-PT && ---  && --- && ---&&--- &&---&&---&&---&&1e-8\\
	%	ALM-PP && ---  && --- && ---    && --- &&--- &&---&&---&&1e-8\\
		\midrule
		ALG2 &&292442(8590.28s)  && 2.87e-6 &&1.56e-12  &&2.11e-12 &&2.33e-10 &&$<$eps&&29.89&&1e-8\\
		\bottomrule % inserts single-line
		\bottomrule % inserts single-line
	\end{tabular}
	\vspace*{-0.5em}
	\caption{For $n(t)$ of the first column of each algorithm, $n$ presents the outer ALM iteration number or iteration number for ALG2 for the scaled residual Err$(u^{k+1}, \lambda^{k+1})$ less than the stopping value, $t$ denoting the CPU time. For ALM based algorithms, ``---" denotes the cases when the iteration time more than 3000 seconds. We keep ALG2 for comparison. }
	\label{tab:rof:butterfly:iso}
\end{table}

\begin{table}%[!htb]
	\centering % centering table
	\begin{tabular}{l@{}l@{}rl@{}rl@{}rl@{}rl@{}rl@{}rl@{}rl@{}l@{}r} % creating ten columns
		%\begin{tabular}{lr@lr@r@lr@r@lr@r@lr@r@l} % creating ten columns
		%\begin{tabular}{l*{5}{c}r}
		\toprule
		%\cmidrule{2-9}
		%& \multicolumn{6}{c}{$\alpha = 0.2$}
		& \multicolumn{9}{c}{Isotropic TV deblurring: $\alpha = 0.1$ }
		& \multicolumn{5}{c}{Pepper: $512 \times 512$}\\
				\midrule
					\midrule
		& \multicolumn{9}{c}{Err$=10^{-5}$, $\mu = 10^{-6}$ }
		& \multicolumn{5}{c}{Err$=10^{-5}$, $\mu = 10^{-9}$}\\
		%\cmidrule{2-12} %inserting double-line
		\midrule
		%\cmidrule{14-15}
		& \multicolumn{2}{c}{$n(t)$}
		& \multicolumn{2}{c}{$\text{res}(u)$}
		& \multicolumn{2}{c}{$\text{res}(\lambda)$}
		& \multicolumn{2}{c}{$\text{PSNR}$}
	    & \multicolumn{2}{c}{$n(t)$}
		& \multicolumn{2}{c}{$\text{res}(u)$}
		& \multicolumn{2}{c}{$\text{res}(\lambda)$}
		& \multicolumn{2}{c}{$\text{PSNR}$}\\
		%\cmidrule{1-15} % inserts single-line
		\midrule
		%ALG2&& 97&(1.66s)  && 466&(7.51s) && 261&(4.53s) && 1189&(20.40s)\\
		%paDR &&69&(1.50s) && 303&(6.55s) && 92&(1.97s) && 420&(9.37)\\
		%\midrule % inserts single-line
		ALM-PDP && 3(126.08s)&&1.06e-4&&1.08e-3&&28.48 &&3(117.37s) && 4.36e-4 &&1.07e-3&&28.48\\
		%	ALM-PP && 7(114.10s)  &&2.97e-3     && 4.30e-3   && 2.29e-5 &&3.63e-5 &&3.34e-8&&19.18&&1e-4\\
		\midrule
		ALG2 &&4775(178.18s) &&2.02e-3&&5.48e-4&&28.36 &&4762(170.65s) &&2.02e-3&&   5.72e-4&&28.36\\
		\midrule 
		\midrule
		& \multicolumn{9}{c}{Err$=10^{-6}$, $\mu = 10^{-6}$ }
& \multicolumn{5}{c}{Err$=10^{-6}$, $\mu = 10^{-9}$}\\
%\cmidrule{2-12} %inserting double-line
\midrule
		& \multicolumn{2}{c}{$n(t)$}
& \multicolumn{2}{c}{$\text{res}(u)$}
& \multicolumn{2}{c}{$\text{res}(\lambda)$}
& \multicolumn{2}{c}{$\text{PSNR}$}
& \multicolumn{2}{c}{$n(t)$}
& \multicolumn{2}{c}{$\text{res}(u)$}
& \multicolumn{2}{c}{$\text{res}(\lambda)$}
& \multicolumn{2}{c}{$\text{PSNR}$}\\
%\cmidrule{1-15} % inserts single-line
\midrule
		ALM-PDP &&4(196.48s)  && 3.07e-5 &&9.69e-5  &&28.48 &&4(186.79s)&&7.08e-5&&1.70e-4&&28.48 \\%6.75e-5 resu
		%	ALM-PP && 10(506.72s)   && 5.08e-5     &&6.96e-5   && 3.90e-7&&6.00e-7 &&3.29e-10&&19.18&&1e-6\\
		\midrule
		ALG2 &&11017(397.71s) && 2.02e-4 &&5.58e-5 &&28.47 &&10911(395.16s) &&2.02e-4&&5.54e-5&&28.47\\
		\bottomrule % inserts single-line
	\end{tabular}
	\vspace*{-0.5em}
	\caption{For $n(t)$ of the first column of each algorithm, $n$ presents the outer ALM iteration number or iteration number for ALG2 for the scaled residual Err$(u^{k+1}, \lambda^{k+1})$ less than the stopping value, $t$ denoting the CPU time. ``---" denotes the iteration time more than 2000 seconds. The notation ``$<$eps" is the same as in Table .}
	\label{tab:deblur:iso}
\end{table}

For numerical comparisons, we mainly choose the accelerated primal-dual algorithm ALG2 \cite{CP} with asymptotic convergence rate $\mathcal{O}(1/k^2)$, which is very efficient, robust and standard algorithms for imaging problems. We follow the same parameters for ALG2 as in \cite{CP} and the corresponding software for the ROF model. Here we use the stopping criterion \eqref{eq:stopping:criterion} instead of the normalized primal-dual gap \eqref{eq:l2-tv-gap} as in \cite{CP}.   
The test images Lena and Cameraman are with size $256\times 256$ and images Monarch and Sails are with size $768 \times 512$. The original, noisy and denoised images can be seen in Figure \ref{lena:cameraman:denoise} and \ref{monarch:sails:denoise}. We acknowledge that the residuals $\text{Res1}(u^{k+1},\lambda^{k+1})$ and $\text{Gap}(u^{k+1},\lambda^{k+1})$ can be unbounded without projection of $\lambda^{k+1}$ to the feasible set as in ALM-PT. We leave them in the tables for references. 

From Table \ref{tab:rof:lena:ani} and  Table \ref{tab:rof:sails:ani}, it can be seen that the proposed ALM-PDP, ALM-PDD and ALM-PT are very efficient and competitive for the anisotropic ROF model and are very robust for different sizes of images. From Table \ref{tab:rof:cameraman:iso} and Table \ref{tab:rof:butterfly:iso}, it can be seen that the proposed ALM-PDP are still very efficient and competitive for the isotropic cases.  
 The performance of ALG2 seems to hold nearly the same efficiency for the isotropic and anisotropic cases. Although the residual $\text{res}(\lambda)$  for smaller  $\text{Err}(u^{k+1}, \lambda^{k+1})$ are much smaller for ALG2 than the proposed semismooth Newton based ALM methods, however,  the residual $\text{res}(u)$ of ALG2 is more higher than the proposed ALM methods. It seems ALG2 is more efficient for decreasing the dual error $\text{res}(\lambda)$.

\begin{table}%[!htb]
	\centering % centering table
	%\begin{tabular}{rl@{}r@{}rl@{}r@{}rl@{}r@{}rl@{}r@{}rl@{}lrl@{}r@{}r} % creating ten columns
	%\begin{tabular}{lr@lr@r@lr@r@lr@r@lr@r@l} % creating ten columns
	\begin{tabular}{l*{14}{c}r}
		\midrule
		%\cmidrule{14-15}
		& \multicolumn{1}{c}{$k=1$}
		& \multicolumn{1}{c}{$k=2$}
		& \multicolumn{1}{c}{$k=3$}
		& \multicolumn{1}{c}{$k=4$}
		& \multicolumn{1}{c}{$k=5$}
		& \multicolumn{1}{c}{$k=6$}
		\\
		%\cmidrule{1-15} % inserts single-line
		\midrule
		res($u$)  &3.50e-6     &5.30e-7  &1.53e-5&6.14e-8  &3.29e-8 &1.94e-7 \\
		res($\lambda$) & 9.70   &1.09  &9.99e-2 &6.96e-3  &3.07e-4 &9.13e-6\\
		%%		  Kun   &&3.05e-6     &&2.43e-2  &&2.49e-2&&2.95e-6  &&6.19e-7 &&1.25e-11 \\
		%%		    Hin   &&3.05e-6     &&2.43e-2  &&2.49e-2&&2.95e-6  &&6.19e-7 &&1.25e-11 \\
		Gap   &1.61e-4     &1.41e-5  &1.14e-6&5.89e-8  &1.58e-9 &2.58e-11 \\
		%		%\midrule % inserts single-line
		$N_{SSN}$  &10     &9  &8&11  &9 &7 \\
		$N_{ABCG}$ & 20   &29  &51 &56  &102 &116\\
		%	k=24 && 2   &&4  &&4 &&7  &&3 &&1\\
		
		\bottomrule % inserts single-line
	\end{tabular}	
	\vspace*{-0.5em}
	\caption{Image Lena denoised by anisotropic TV with algorithm ALM-PDP. 	$N_{SSN}$ denotes the number of Newton iterations for each ALM iteration.  $N_{ABCG}$ denotes average number of BiCGSTAB iterations for each Newton iteration. Here $\sigma_0 = 4$, $\sigma_{k+1} = 4\sigma_k$.}
	\label{tab:lena:ssn:iter:ani}
\end{table}

%\begin{table}%[!htb]
%	\centering % centering table
%	%\begin{tabular}{rl@{}r@{}rl@{}r@{}rl@{}r@{}rl@{}r@{}rl@{}lrl@{}r@{}r} % creating ten columns
%	%\begin{tabular}{lr@lr@r@lr@r@lr@r@lr@r@l} % creating ten columns
%	\begin{tabular}{l*{14}{c}r}
%		\midrule
%		%\cmidrule{14-15}
%		& \multicolumn{1}{c}{$k=1$}
%		& \multicolumn{1}{c}{$k=2$}
%		& \multicolumn{1}{c}{$k=3$}
%		& \multicolumn{1}{c}{$k=4$}
%		& \multicolumn{1}{c}{$k=5$}
%		& \multicolumn{1}{c}{$k=6$}
%		& \multicolumn{1}{c}{$k=7$}
%		& \multicolumn{1}{c}{$k=8$}
%		\\
%		%\cmidrule{1-15} % inserts single-line
%		\midrule
%		res($u$)  &3.10e-7     &4.36e-8  &1.14e-6&2.19e-9  &5.72e-8 &9.08e-9 &1.76e-8& 2.54e-7 \\
%		res($\lambda$) & 5.22   &6.95e-1  &1.09e-1 &1.62e-2  &2.61e-3 &4.62e-4&8.08e-5 & 1.49e-5\\
%		%%		  Kun   &&3.05e-6     &&2.43e-2  &&2.49e-2&&2.95e-6  &&6.19e-7 &&1.25e-11 \\
%		%%		    Hin   &&3.05e-6     &&2.43e-2  &&2.49e-2&&2.95e-6  &&6.19e-7 &&1.25e-11 \\
%		Gap   &8.03e-5     &8.56e-6  &1.12e-6&1.41e-7 & 1.95e-8 &2.81e-9 &4.01e-10&5.77e-11 \\
%		%		%\midrule % inserts single-line
%		$N_{SSN}$  &9     &7  &8&9  &8 &9 &10& 10\\
%		$N_{ABCG}$ & 21   &35  &57 &91  &139 &360 &570&535\\
%		
%		\bottomrule % inserts single-line
%	\end{tabular}	
%	\vspace*{-0.5em}
%	\caption{Image Cameraman denoised by isotropic TV with algorithm ALM-PDP.	$N_{SSN}$ denotes the number of Newton iterations.  $N_{ABCG}$ denotes average number of BiCGSTAB iterations. Here $\sigma_0 = 4$, $\sigma_{k+1} = 4\sigma_k$.}
%	\label{tab:cameraman:ssn:iter:iso}
%\end{table}

Our proposed algorithms are  much more efficient for the anisotropic case compared to the isotropic case for ROF denoising model. Besides Theorem  \ref{thm:metric:regular:lag} and \ref{thm:metric:regular:dual:iso}, we present another comparison between the isotropic TV and anisotropic TV with algorithm ALM-PDP.  For Table \ref{tab:lena:ssn:iter:ani}, we give enough Newton iterations compared to previous tables with the stopping the criterion  $\text{res-alm}_{SSNPD}^l \leq {10^{-3}}/{\sigma_k}$ for each ALM as in \eqref{eq:stop:alm} and  $\text{tol}_{k+1}\leq 10^{-5}$ in \eqref{eq:stop:bicg} for each Newton iteration. The ALM iterations stop while Err$(u^{k+1},\lambda^{k+1})\leq 10^{-7}$ both for the anisotropic TV.

Finally, let's turn to the image deblurring problem. For the image deblurring with  $Ku = \kappa \ast u$ being the convolution with kernel $\kappa$ and additional regularization term $\frac{\mu}{2}\|\nabla u\|_{2}^2$, we mainly focus on the isotropic case as in Table \ref{tab:deblur:iso}. The original, degraded and reconstructed Pepper images with size $512\times 512$ can be found in Figure \ref{lena:cameraman:denoise}. Since the primal model \eqref{eq:ROF} is only strongly convex on the primal variable $u$ instead being strongly convex on both $u$ and $\lambda$, we thus employ ALG2 for comparison. The strongly convex parameter for $D(u)$ is a subtle issue since $K^*K$ is not invertible with the kernel.  We use the strongly convex parameter of $\frac{\mu}{2}\|\nabla u\|_{2}^2$ instead.   It can be seen that both ALM-PDP and ALG2 are very efficient for small $\mu$ case. It is interesting that both ALM-PDP and ALG2 seem more efficient for the case $\mu=10^{-9}$ than the case $\mu=10^{-6}$. 

\begin{figure}%[!htb]
	%\graphicspath{{fig//}}
	\begin{center}
		\subfloat[Original image: lena]
		{\includegraphics[width=3.0cm]{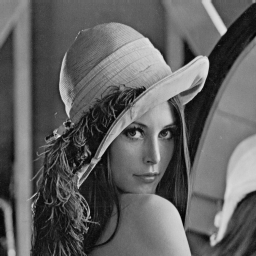}}\quad
		\subfloat[Noisy image: $10\%$ Gaussian]
		{\includegraphics[width=3.0cm]{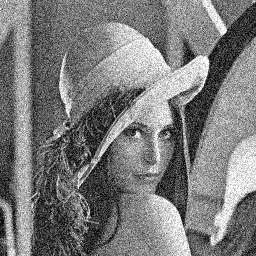}} \quad
		\subfloat[ALM-PDP($10^{-6}$)]
		{\includegraphics[width=3.0cm]{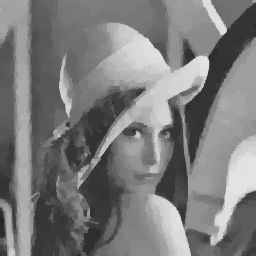}} \\
		\subfloat[Original image: Cameraman]
		{\includegraphics[width=3.0cm]{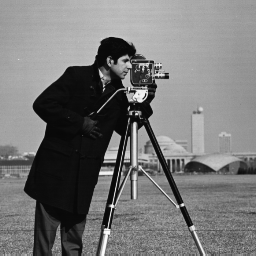}}\quad
		\subfloat[Noisy image: $10\%$ Gaussian]
		{\includegraphics[width=3.0cm]{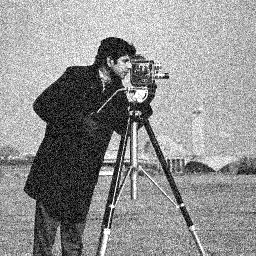}} \quad
		\subfloat[ALM-PDP($10^{-6}$)]
		{\includegraphics[width=3.0cm]{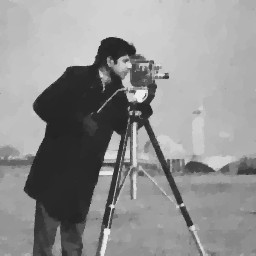}} \\
						\subfloat[Original image: Pepper]
		{\includegraphics[width=3.0cm]{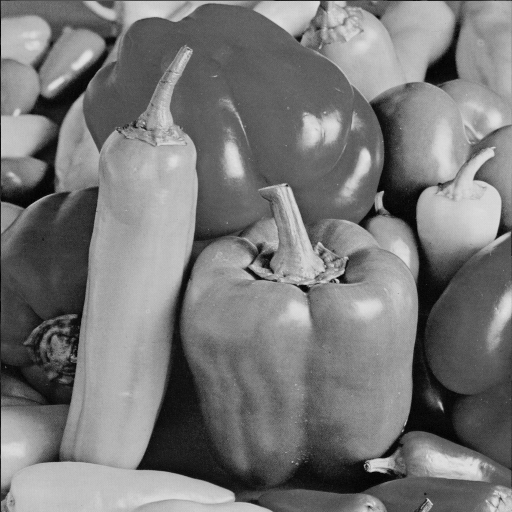}}\quad
		\subfloat[Degraded image:]
		{\includegraphics[width=3.0cm]{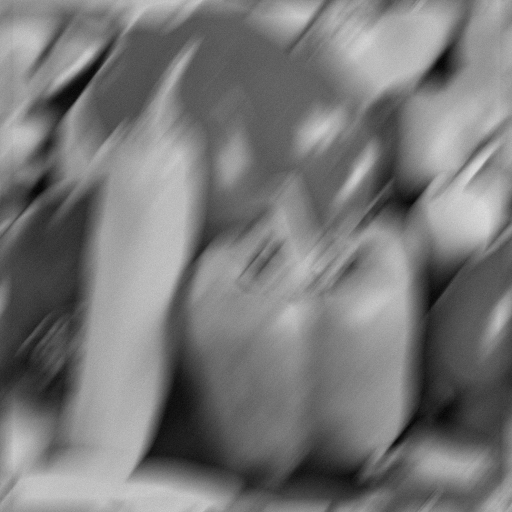}} \quad
		\subfloat[ALM-PDP($10^{-5}$)]
		{\includegraphics[width=3.0cm]{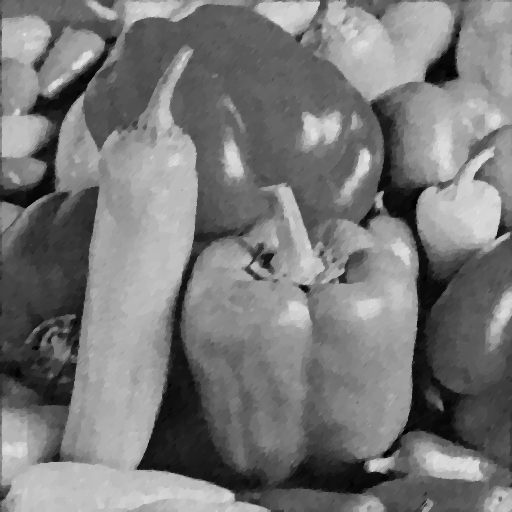}} \\
	\end{center}
	%\caption{Fronalpstock}
	\caption{\scriptsize{Images (a) and (d) show the original $256 \times 256$ Lena and Cameraman images. Image (g) shows the original $512 \times 512$ Pepper image. (b) and (e) are  noisy versions corrupted by 10\% Gaussian noise. Image (h) is the degraded Pepper image ($\approx 40$ pixels motion blur, Gaussian noise, standard
			deviation $0.01$). (c) and (f) show the denoised images with ALM-PDP with Err$(u^{k+1}, \lambda^{k+1})<10^{-6}$. Image Lena is denoised by anisotropic TV while image Cameraman is denoised by isotropic TV. Image (i) is the deblurred Pepper image with isotropic TV by ALM-PDP with Err$(u^{k+1}, \lambda^{k+1})<10^{-5}$. }}
	\label{lena:cameraman:denoise}
\end{figure}

\begin{figure}%[!htb]
	%\graphicspath{{fig//}}
	\begin{center}
		\subfloat[Original image: sails]
		{\includegraphics[width=3.5cm]{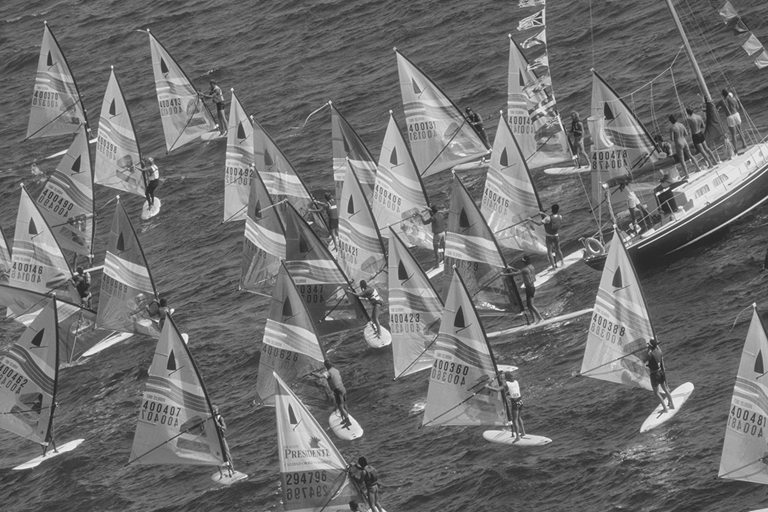}}\quad
		\subfloat[Noisy image: $10\%$ Gaussian]
		{\includegraphics[width=3.5cm]{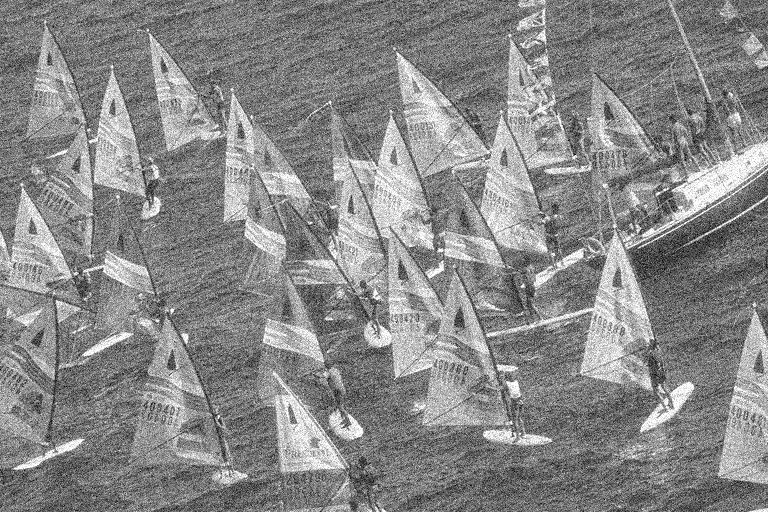}} \quad
		\subfloat[ALM-PDP($10^{-6}$)]
		{\includegraphics[width=3.5cm]{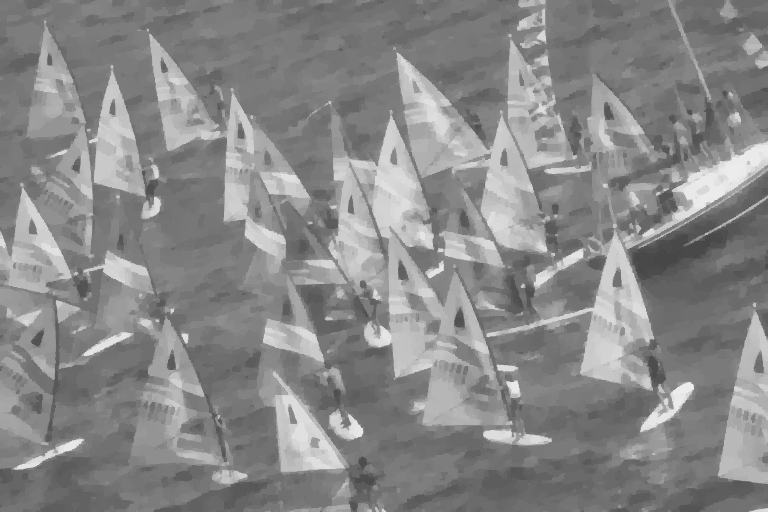}} \\
		\subfloat[Original image: Monarch]
		{\includegraphics[width=3.5cm]{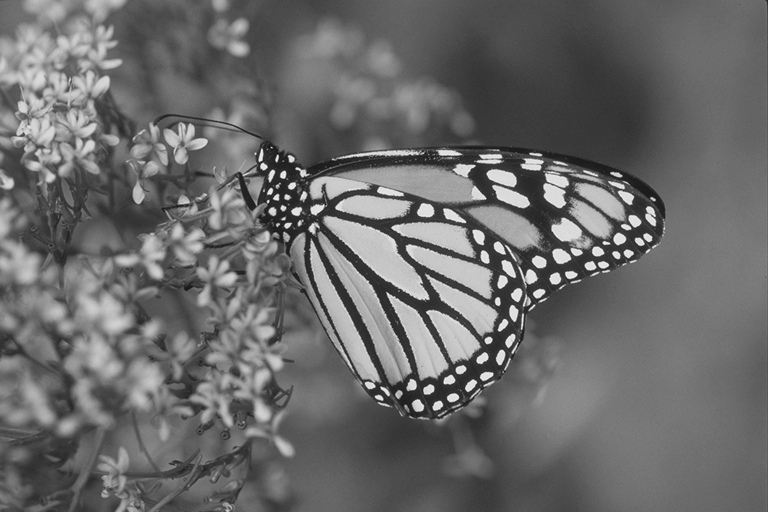}}\quad
		\subfloat[Noisy image: $10\%$ Gaussian]
		{\includegraphics[width=3.5cm]{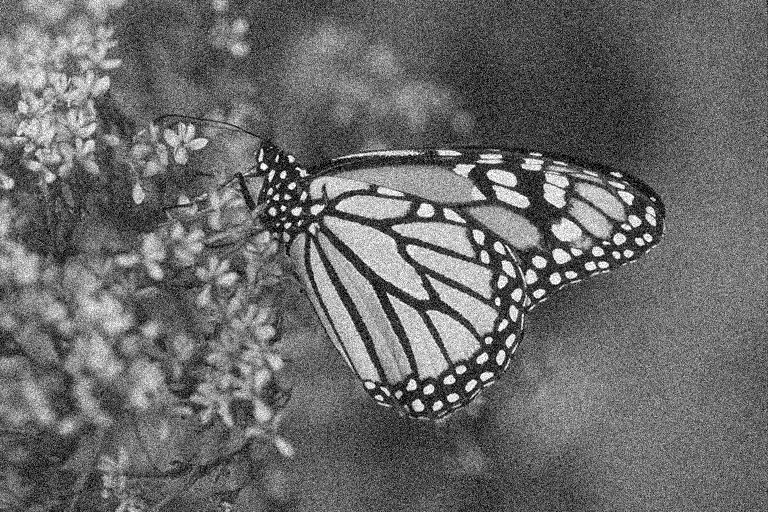}} \quad
		\subfloat[ALM-PDP($10^{-6}$)]
		{\includegraphics[width=3.5cm]{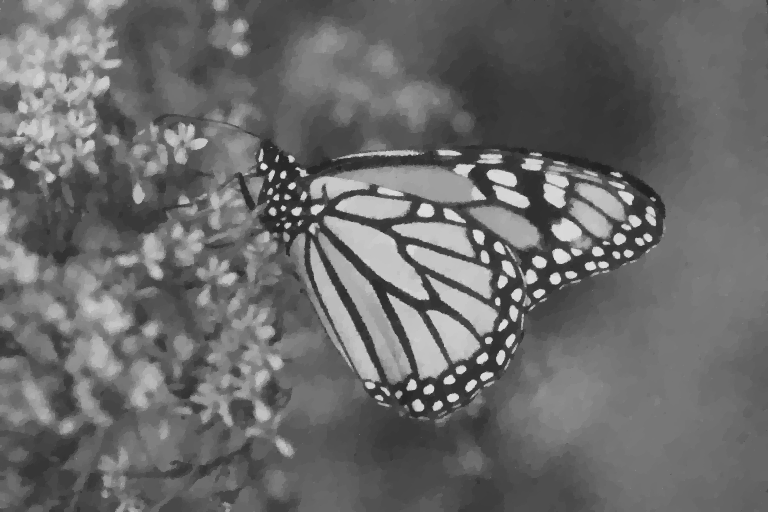}} \\
	\end{center}
	%\caption{Fronalpstock}
	\caption{Images (a) and (d) show the original $768 \times 512$ Sails and Monarch images. (b) and (e) are  noisy versions corrupted by 10\% Gaussian noise. (c) and (f) show the denoised images with ALM-PDP with Err$(u^{k+1}, \lambda^{k+1})<10^{-6}$. Image Sails is denoised by anisotropic TV while image Monarch is denoised by isotropic TV.}
	\label{monarch:sails:denoise}
\end{figure}

\section{Discussion and Conclusions}\label{sec:conclude}
In this paper, we proposed several semismooth Newton based ALM algorithms. The proposed algorithms are very efficient and competitive especially for anisotropic TV. Global convergence and the corresponding asymptotic convergence rates are also discussed with metrical subregularites. Numerical tests show that compared with first-order algorithm, more computation efforts are deserved with ALM.  Currently, no preconditioners are employed by BiCGSTAB or CG for the linear systems solving Newton updated. Actually, preconditioners for BiCGSTAB or CG are desperately needed, especially for the isotropic cases. Additionally, the asymptotic convergence rate of the KKT residuals is also an interesting topic for future study \cite{CST}.

\noindent
{\small
	\textbf{Acknowledgements}
	The author acknowledges the support of
	NSF of China under grant No. \,11701563.	The work was originated during the author's visit to Prof. Defeng Sun of the Hong Kong Polytechnic University in October 2018. The author is very grateful to Prof. Defeng Sun
for introducing the framework on semismooth Newton based ALM developed by him and his collaborators and for his suggestions on the self-adjointness of the corresponding operators for Newton updates  where CG can be employed. The author is also very grateful to Prof. Kim-Chuan Toh, Dr. Chao Ding, Dr. Xudong Li and Dr. Xinyuan Zhao for the discussion on the semismooth Newton based ALM. The author is also very grateful to Prof. Michael Hinterm{\"u}ller for the discussion on the primal-dual semismooth Newton method during the author's visit to Weierstrass Institute for Applied Analysis and Stochastics (WIAS) supported by Alexander von Humboldt Foundation during 2017. 
}
%He also acknowledges the support of Alexander
%von Humboldt Foundation.

%\bibliographystyle{plain}
%\bibliography{paper}

\end{document}